\documentclass[12pt]{amsart}

\usepackage[T1]{fontenc}
\usepackage{lmodern}
\usepackage{graphicx}
\usepackage{amssymb}

\usepackage{amsmath}
\usepackage{amsthm}

\usepackage[top=2cm,bottom= 3cm,left = 2cm,right = 2cm]{geometry}
\usepackage[utf8]{inputenc}

\usepackage{amsfonts}
\usepackage{geometry}

\usepackage{graphicx}
\usepackage{xcolor}
\usepackage{graphics}
\usepackage{listings}

\usepackage[english]{babel}

\usepackage{amsthm}

\theoremstyle{plain}
\newtheorem{theorem}{Theorem}[section]   
\newtheorem{lemma}[theorem]{Lemma}       
\newtheorem{proposition}[theorem]{Proposition}
\newtheorem{corollary}[theorem]{Corollary}

\theoremstyle{definition}

\theoremstyle{remark}
\newtheorem{remark}[theorem]{Remark}

\renewcommand{\epsilon}{\varepsilon}
\usepackage{amsmath}
\usepackage{graphicx}
\usepackage[colorlinks=true, allcolors=blue]{hyperref}

\begin{document}

\title[Multi-peaks in the zero-mass case]{Existence of multi-peak solutions for singularly perturbed
problems in the zero-mass case}

\author{Léo Demuth}
\address{DER de mathématiques, ENS Paris-Saclay, Gif-sur-Yvette, France}
\email{leo.demuth@ens-paris-saclay.fr}

\keywords{Concentration phenomena, Zero-mass case, Multi-peak solutions, Lyapunov-Schmidt reduction}

\subjclass[2020]{35J60,35J10}

\begin{abstract}
We prove the existence of multi-peak solutions for singularly perturbed elliptic
problems in the so-called zero-mass case by applying the classical
Lyapunov–Schmidt reduction method. The goal is to construct solutions concentrating around
several distinct interior points of the domain, identified as nondegenerate critical points of a potential. 
\end{abstract}

\maketitle
\markboth{Léo Demuth}{Multi-peaks in the zero-mass case}

\setcounter{tocdepth}{2}
\tableofcontents

\begin{center}
\section{Introduction}
\end{center}

The study of concentrating solutions for singularly perturbed nonlinear
Schr\"odinger equations has attracted considerable attention over the last
decades. A classical model is
\begin{equation}\label{classicalNLS}
 -\epsilon^2\Delta u+V(x)u=u^p,\qquad
 u>0\quad\text{in }\mathbb{R}^n,
\end{equation}
where \(1<p<(n+2)/(n-2)\) and \(\epsilon>0\) is a small parameter.
In the semiclassical regime \(\epsilon\to0\), one is interested in
solutions which develop one or more spikes whose location is determined by
the geometry of the potential \(V\).

The first results in this direction go back to Floer and Weinstein
\cite{FloerWeinstein1986}, who constructed, in the one-dimensional cubic
case, standing waves concentrating near nondegenerate critical points of
the potential. Their approach was subsequently extended to higher
dimensions and more general nonlinearities by Oh \cite{Oh1988}. Since
then, a large literature has developed around the existence and
multiplicity of semiclassical states. In particular, perturbative methods
were used by Ambrosetti, Badiale and Cingolani
\cite{AmbrosettiBadialeCingolani1997} to construct solutions concentrating
near critical points of \(V\), while variational and penalization methods,
notably those developed by del Pino and Felmer \cite{delPinoFelmer1998},
led to the construction of multi-peak solutions under more general
assumptions on the potential.

A fundamental feature of \eqref{classicalNLS} is the presence of a positive
mass term. Indeed, freezing the potential at a point \(P\), the limiting
profile satisfies
\begin{equation}\label{positiveMassLimit}
 -\Delta U+V(P)U=U^p
 \qquad\text{in }\mathbb{R}^n.
\end{equation}
When \(V(P)>0\), positive solutions of \eqref{positiveMassLimit} decay
exponentially at infinity. This exponential decay plays a crucial role in
the semiclassical analysis: in a multi-peak configuration, profiles whose
centers are separated at the original scale have exponentially small
interactions after rescaling.

In this paper we consider the singularly perturbed problem
\begin{equation}\label{Main}
 \tag{$3$}
 \begin{cases}
 -\epsilon^2\Delta u=u^p-Q(x)u^q & \text{in }\Omega,\\
 u>0 & \text{in }\Omega,\\
 u=0 & \text{on }\partial\Omega,
 \end{cases}
\end{equation}
where \(\Omega\) is either \(\mathbb{R}^n\) or a smooth bounded domain,
\(1<q<p<(n+2)/(n-2)\), and \(Q\) is positive and sufficiently regular.
Our main interest is the whole-space problem \(\Omega=\mathbb{R}^n\).

Problem \eqref{Main} may be regarded as a zero-mass counterpart of the
classical semiclassical Schr\"odinger equation. Indeed, when \(q=1\),
\eqref{Main} becomes
\[
 -\epsilon^2\Delta u+Q(x)u=u^p,
\]
and hence falls precisely within the classical positive-mass framework.
On the other hand, for \(q>1\), the derivative at the origin of the
nonlinearity
\[
 f(x,u)=u^p-Q(x)u^q
\]
vanishes. The corresponding frozen equation is
\begin{equation}\label{BVPinf}\tag{$4$}
 \begin{cases}
 -\Delta U=U^p-Q(P)U^q & \text{in }\mathbb{R}^n,\\
 U>0 & \text{in }\mathbb{R}^n,\\
 U(x)\to0 & \text{as }|x|\to\infty,
 \end{cases}
\end{equation}
and its positive radial solution \(U_P\) has only algebraic decay. In the
range relevant to our construction,
\[
 U_P(x)=O(|x|^{2-n})
 \qquad\text{as }|x|\to\infty.
\]
This difference has important consequences for the construction of
concentrating solutions.

Singularly perturbed problems of zero-mass type were previously
investigated by Dancer and Santra \cite{DancerSantra2010}. In particular,
for the analogue of \eqref{Main} in a bounded domain, they obtained
positive multi-spike solutions concentrating near prescribed isolated
local minima of \(Q\). Their analysis also highlights the special role
played by the exponent
\[
 q^*=\frac{n}{n-2}
\]
in the asymptotic behavior of the spikes.

The purpose of the present paper is to develop a perturbative theory for
\eqref{Main} in the whole space and to show that, in analogy with the
classical positive-mass Schr\"odinger problem, concentration is not
restricted to local minima of the potential. More precisely, we prove the
existence of positive multi-peak solutions concentrating near any
prescribed finite collection of distinct nondegenerate critical points of
\(Q\). Thus, in particular, the concentration points may be local maxima
or saddle points.

We now state the main result in the whole space. Set
\[
 q_1=\frac{(n+1)+\sqrt{(n-1)^2+8}}{2(n-2)}.
\]

\begin{theorem}\label{MainWholeSpace}
Assume that \(n\geq9\),
\[
 q_1<q<p<\frac{n+2}{n-2},
\]
and that \(Q\in C^2(\mathbb{R}^n)\) is positive, with \(Q\), \(\nabla Q\),
and \(\nabla^2Q\) bounded. Let
\(\xi_1,\ldots,\xi_K\) be distinct nondegenerate critical points of \(Q\).
Then, for all sufficiently small \(\epsilon>0\), problem
\eqref{Main} with \(\Omega=\mathbb{R}^n\) admits a positive solution
\(u_\epsilon\) and there exist points
\(P_{\epsilon,1},\ldots,P_{\epsilon,K}\) such that
\[
 P_{\epsilon,k}\longrightarrow\xi_k,
 \qquad
 u_\epsilon(P_{\epsilon,k})
 \longrightarrow U_{\xi_k}(0)>0,
 \qquad k=1,\ldots,K.
\]
Moreover,
\[
 \lim_{R\to\infty}\,
 \limsup_{\epsilon\to0}\,
 \sup_{\substack{x\in\mathbb{R}^n\\
 \operatorname{dist}(x,\{P_{\epsilon,1},\ldots,P_{\epsilon,K}\})
 \geq R\epsilon}}
 u_\epsilon(x)=0.
\]
\end{theorem}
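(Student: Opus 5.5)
The plan is a Lyapunov–Schmidt reduction in rescaled variables. Setting $x=\epsilon y$, problem \eqref{Main} becomes
\[
-\Delta v=v^p-Q(\epsilon y)v^q \quad\text{in }\mathbb{R}^n .
\]
Take concentration points $P=(P_1,\dots,P_K)$ with $|P_k-\xi_k|\le\delta$. The approximate solution is
\[
W_P(y)=\sum_k U_{P_k}\bigl(y-P_k/\epsilon\bigr),
\]
where $U_{P_k}$ is the radial solution of \eqref{BVPinf} with $Q$ frozen at $P_k$.

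I will first record the properties of $U_P$ that the whole argument uses:
\begin{itemize}
\item \emph{Decay.} The bound $U_P=O(|x|^{2-n})$ holds with $\nabla U_P=O(|x|^{1-n})$, and these estimates are uniform for $P$ near $\xi_k$.
\item \emph{Nondegeneracy.} The kernel of $L_P=-\Delta-pU_P^{p-1}+qQ(P)U_P^{q-1}$ in $D^{1,2}(\mathbb{R}^n)$ is exactly $\mathrm{span}\{\partial_iU_P\}$.
\item \emph{Regularity in $P$.} The map $P\mapsto U_P$ is $C^1$, by the implicit function theorem applied to nondegeneracy.
\end{itemize}
Because all decay is algebraic, I will work in weighted $L^\infty$ norms rather than in $H^1$. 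Write $\rho(y)=\sum_k(1+|y-P_k/\epsilon|)^{-1}$ and set
\[
\|\phi\|_*=\sup_y \rho(y)^{-\mu}|\phi(y)|,\qquad
\|h\|_{**}=\sup_y \rho(y)^{-\mu-2}|h(y)|,
\]
with $\mu$ slightly below $n-2$ chosen to match the decay.

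Next I will estimate the error $E=\Delta W+W^p-Q(\epsilon y)W^q$. It has two sources.
\begin{itemize}
\item \emph{Variation of $Q$.} This gives $(Q(\epsilon y)-Q(P_k))U_{P_k}^q$, with $|Q(\epsilon y)-Q(P_k)|\le C\epsilon|y-P_k/\epsilon|$. Since $U^q$ decays like $|y|^{-(n-2)q}$ and $q>n/(n-2)$, this error is small in $\|\cdot\|_{**}$ with a gain $\epsilon^{\sigma}$.
\item \emph{Interactions.} Terms such as $U_{P_j}^{p-1}U_{P_k}$ for $j\ne k$ are of order $\epsilon^{n-2}$ near the $j$-th peak.
\end{itemize}
Then comes the linear theory for the projected problem
\[
L\phi=h+\sum c_{ki}Z_{ki},\qquad Z_{ki}=\partial_iU_{P_k}(\cdot-P_k/\epsilon),
\]
under the orthogonality conditions $\int \phi Z_{ki}U^{p-1}=0$. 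I will prove the a priori bound $\|\phi\|_*\le C\|h\|_{**}$ by contradiction and blow-up: normalize, pass to the limit near each peak using nondegeneracy, and use a barrier of the form $\rho^{\mu}$ away from the peaks. The barrier step requires
\[
(n-2)(q-1)>2,
\]
so that $qQU^{q-1}$ is a perturbation of $-\Delta$ relative to the weight, together with $\mu<n-2$. Existence of $\phi$ then follows from Fredholm theory. A contraction argument solves the nonlinear projected problem, giving $\|\phi_P\|_*\le C\|E\|_{**}$, with $\phi_P$ depending $C^1$ on $P$. The nonlinear terms $|W+\phi|^p$ and $|W+\phi|^q$ must be handled with $q<2$ possibly, via Hölder-type inequalities in the weighted norm.

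Finally I will solve the reduced finite-dimensional system $c_{ki}(P)=0$. Testing the equation against $Z_{ki}$ gives
\[
c_{ki}\cdot\mathrm{const}=\int E Z_{ki}+\int N(\phi)Z_{ki}+\dots
\]
The leading term should be $-\int Q(\epsilon y)\,\partial_i(U^{q+1})/(q+1)$. After integrating by parts and Taylor expanding, this equals
\[
\epsilon\,\partial_iQ(P_k)\,\frac{1}{q+1}\int U_{P_k}^{q+1}+o(\epsilon).
\]
This requires $U^{q+1}|y|$ to be integrable and the Taylor remainder $\epsilon^2|y|^2U^{q+1}$ to be integrable or controlled, which needs $(n-2)(q+1)>n+2$. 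With these, $c(P)=\epsilon\,(\nabla Q(P_k)\,m_k)_k+o(\epsilon)$, uniformly in $P$. Since each $\xi_k$ is a nondegenerate critical point, the Brouwer degree of $\nabla Q$ on $B_\delta(\xi_k)$ is nonzero, so a zero $P_\epsilon\to\xi$ exists.

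Positivity follows by testing with $u^-$ in the full equation, or from the maximum principle, since $\phi$ is small relative to $W$. The pointwise convergence and the vanishing of $u_\epsilon$ away from the peaks follow from $\|\phi\|_*\to0$ and the decay of $U$.

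The main obstacle will be making the error, the interactions, and the nonlinear remainder all $o(\epsilon)$ after projection, given the slow decay. The interaction terms are $O(\epsilon^{n-2})$, which is fine. Harder are the weighted projection $\int\phi$-terms and $\int N(\phi)Z$. These need $\|\phi\|_*^{\min(2,q)}$-type bounds to be $o(\epsilon)$, and the Taylor expansion of $Q(\epsilon y)$ to second order needs $|y|^2U^{q+1}$ to be integrable. I expect that balancing the weight exponent $\mu$ against these integrability thresholds is exactly what produces the restrictions $n\ge9$ and $q>q_1$, where $q_1$ is a root of a quadratic in $q$. I would try first to pick $\mu$ explicitly, e.g. $\mu=(n-2)q-n$ or similar, and check each inequality.
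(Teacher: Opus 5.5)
Your weighted-$L^\infty$ reduction is a legitimate alternative to the paper's framework $X=D^{1,2}\cap L^s$ with $i^*$. The blow-up linear theory, the contraction and the degree step are standard. However, the step you leave open, the choice of the weight exponent $\mu$, is exactly where the hypothesis $q>q_1$ enters, and both choices you propose break the argument. Two constraints are forced.

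(i) Elements of the ball $\{\|\phi\|_*\le C\epsilon\}$ can exceed $W\sim\rho^{n-2}$ in the far field, because $\mu<n-2$. There only $|N(\phi)|\le C|\phi|^q$ is available, and $\rho^{-\mu-2}|\phi|^q\le\|\phi\|_*^q\,\rho^{\mu(q-1)-2}$. So $N$ maps into the $\|\cdot\|_{**}$-space, and the contraction closes, only if $\mu(q-1)\ge 2$. This rules out $\mu=(n-2)q-n=(n-2)(q-1)-2$, which is $<2$ while $q-1<1$.

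(ii) At $|y-P_k/\epsilon|\sim 1/\epsilon$ the factor $Q(\epsilon y)-Q(P_k)$ is of order one. Hence the term $(Q(\epsilon y)-Q(P_k))U_{P_k}^q$ has $\|\cdot\|_{**}$-norm of order $\epsilon^{(n-2)q-\mu-2}$ whenever $\mu+3>(n-2)q$, and that is then all you get for $\|\phi\|_*$. Your bound $|\int N(\phi)Z_{ki}|\lesssim\|\phi\|_*^q$ is $o(\epsilon)$ only if $q\,((n-2)q-\mu-2)>1$.
For $\mu=n-2-\delta$ the exponent is $(n-2)(q-1)-2+\delta$. For $q$ near $q_1$ this is about $0.24+\delta$ when $n=9$, and about $2/(n-1)$ for large $n$. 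So with $\mu$ slightly below $n-2$ you cannot show that the reduced map is $\epsilon\,(m_k\nabla Q(P_k))_k+o(\epsilon)$.

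The same weight effect makes your claim that the interactions are harmless because they are $O(\epsilon^{n-2})$ inaccurate in $\|\cdot\|_{**}$. There the pairwise terms have size $\epsilon^{(n-2)p-\mu-2}$ and $\epsilon^{(n-2)q-\mu-2}$, coming from the region at distance $\sim 1/\epsilon$ from the peaks.

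The fix is to take $\frac{2}{q-1}\le\mu\le (n-2)q-3$ with $\mu<n-2$.
\begin{itemize}
\item Then every piece of $E$ is $O(\epsilon)$ in $\|\cdot\|_{**}$, interactions included. Hence $\|\phi\|_*=O(\epsilon)$ and $\int N(\phi)Z_{ki}=O(\epsilon^q)=o(\epsilon)$.
\item Also $\mu>\frac{n-2}{2}$ automatically, since $q<\frac{n+2}{n-2}$. This is what puts your blow-up limits in $D^{1,2}$, so the nondegeneracy lemma applies.
\item The window is nonempty iff $((n-2)q-3)(q-1)\ge 2$, i.e. $(n-2)q^2-(n+1)q+1\ge 0$, i.e. $q\ge q_1$.
\item Under the dictionary $s\leftrightarrow n/\mu$, this is precisely the paper's window $\frac{n}{(n-2)q-3}<s\le\frac{n(q-1)}{2}$. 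The paper's condition $s>\frac{n}{n-2}$ matches your barrier condition $\mu<n-2$.
\item By contrast, do not expect $n\ge 9$ to come out of this balancing. In the paper it comes from the extra requirement $s\le\frac{2n}{n+2}$ of the $D^{1,2}$ setting.
\end{itemize}

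With this choice your route pays off at the end. Since $\rho\le K$, you get $|\phi|\le K^\mu\|\phi\|_*=O(\epsilon)$ uniformly. So $u_\epsilon(P_{\epsilon,k})\to U_{\xi_k}(0)$ and the vanishing away from the peaks are immediate, whereas the paper needs local $L^\infty$--$L^{2^*}$ estimates.

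Finally, for positivity use $(u^+)^p$ and $(u^+)^q$, as the paper does, and test with $u^-$. With $|u|^p,|u|^q$ the test gives no sign. The alternative ``$\phi$ small relative to $W$'' fails in the far field for the reason given in (i).
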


Thus the solutions given by Theorem~\ref{MainWholeSpace} develop the
prescribed family of peaks, each localized at the scale \(\epsilon\) near
a nondegenerate critical point of \(Q\). To the best of our knowledge,
multi-peak concentration at arbitrary prescribed nondegenerate critical
points has not previously been established for this zero-mass problem in
the whole space.

Although the general perturbative philosophy is reminiscent of the
classical semiclassical theory, the zero-mass character of
\eqref{Main} introduces substantial analytical difficulties. The first
one is the algebraic decay of the limiting profiles. Peaks centered at
mutual distances of order \(1/\epsilon\) interact only polynomially,
rather than exponentially, and therefore the interaction terms require
considerably more delicate estimates. A second difficulty concerns the
functional setting. Since the limiting profiles do not in general belong
to \(L^2(\mathbb{R}^n)\), the natural framework is no longer
\(H^1(\mathbb{R}^n)\). We work instead in a suitable intersection space
of the form
\[
 X=D^{1,2}(\mathbb{R}^n)\cap L^s(\mathbb{R}^n),
\]
where \(s\) has to be chosen compatibly with the decay of the limiting
solution and with the nonlinear estimates required in the reduction.
These features are also responsible for some of the restrictions on the
dimension and on the exponent \(q\) appearing in our main theorem.

Our proof is based on a Lyapunov--Schmidt reduction. For a configuration
\(P=(P_1,\ldots,P_K)\), we start from a superposition of translated
solutions \(U_{P_k}\) of the frozen problem \eqref{BVPinf}. The
nondegeneracy of the single-peak profile allows us to solve the
infinite-dimensional auxiliary equation in the orthogonal complement of
the approximate kernel. A central point is to obtain an invertibility
estimate which is uniform both with respect to \(\epsilon\) and to the
position of the peaks. Because of the algebraic tails of the profiles,
this step requires compactness and interaction estimates adapted to the
space \(X\).

The remaining finite-dimensional equations determine the location of the
peaks. Their leading-order term is proportional to
\[
 \epsilon\,\nabla Q(P_k),
\]
so that, after normalization, the reduced problem converges to the system
\[
 \nabla Q(P_1)=\cdots=\nabla Q(P_K)=0.
\]
The nondegeneracy of the prescribed critical points then allows us to
conclude by a degree argument.

For completeness, we also consider the corresponding Dirichlet problem in
a bounded smooth domain. In that setting the whole-space profiles have to
be projected onto the rescaled domain, and additional boundary errors
arise. Since the concentration points remain uniformly away from the
boundary, these corrections are of lower order and the whole-space
reduction can be suitably adapted. This gives an analogous concentration
result at nondegenerate critical points of \(Q\). We regard this result
mainly as an extension of the whole-space construction and of the
bounded-domain analysis developed in the zero-mass setting by Dancer and
Santra \cite{DancerSantra2010}.

Several natural questions remain open. First, our argument requires the
dimensional restriction \(n\geq9\). This condition enters through the
estimates of the algebraic interactions and of the error terms, and it
would be interesting to understand whether it is merely technical or
whether a different phenomenon occurs in lower dimensions. Second, the
present construction requires \(q\) to lie above the threshold
\(q_1>n/(n-2)\). It is natural to ask whether concentration near arbitrary
nondegenerate critical points persists throughout the larger range
\[
 \frac{n}{n-2}<q<p.
\]
Finally, in the classical positive-mass problem, semiclassical
concentration can be obtained under substantially weaker geometric
assumptions than nondegeneracy of an isolated critical point. It would
therefore be interesting to investigate the zero-mass problem in the
presence of degenerate critical points or critical manifolds of \(Q\).

The paper is organized as follows. Section 2 is devoted to the limiting
zero-mass problem and collects the properties of its positive radial
solution and of the corresponding linearized operator that are needed in
the sequel. Section 3 contains the main part of the analysis and treats
the whole-space problem: we introduce the functional framework, carry out
the Lyapunov--Schmidt reduction, solve the auxiliary and
finite-dimensional equations, and finally establish the concentration
properties of the solutions. Section 4 adapts the construction to a
smooth bounded domain.

$\\$

In this paper, we adopt the following conventions:
\begin{itemize}
    \item \(C,C'\), or \(C''\) denote positive constants which may change
    from line to line and are independent of \(P\) and \(\epsilon\), unless
    otherwise specified. All \(o(\cdot)\) and \(O(\cdot)\) estimates are
    uniform with respect to \(P\).
    \item The letter \(k\) is used for a general peak, while \(k_0\)
    denotes a fixed peak.
    \item The letter \(m\) is used to index sequences.
    \item The letter \(j\) denotes a coordinate index.
\end{itemize}
\begin{remark}
     AI-based language assistance was used to improve the English grammar and style of this report.
It was not used to generate mathematical arguments, proofs or results.

\end{remark}

\paragraph{Acknowledgments.}
The author would like to thank Professor Angela Pistoia for suggesting the
problem and for many helpful discussions. The author is also grateful to
the Department of Basic and Applied Sciences for Engineering (SBAI) of
Sapienza University of Rome for its hospitality.

\begin{center}
\section{The limit problem}
\end{center}

Throughout the rest of the paper we assume that
\[
 n\geq 9,\qquad
 q_1<q<p<\frac{n+2}{n-2},
 \qquad
 q_1=\frac{(n+1)+\sqrt{(n-1)^2+8}}{2(n-2)},
\]
and that \(Q\) is positive and of class \(C^2\), with
\(Q\), \(\nabla Q\), and \(\nabla^2Q\) bounded. When \(\Omega\) is a
bounded domain, these assumptions are understood on \(\overline\Omega\).

For \(P\in\Omega\), after the change of variables \(x=P+\epsilon y\),
problem \eqref{Main} is rewritten as
\begin{equation}\label{BVP1}\tag{$5$}
 \begin{cases}
 -\Delta u(y)=u^p(y)-Q(P+\epsilon y)u^q(y)
     & \text{in }\Omega_{\epsilon,P},\\
 u>0 & \text{in }\Omega_{\epsilon,P},\\
 u=0 & \text{on }\partial\Omega_{\epsilon,P},
 \end{cases}
\end{equation}
where
\[
 \Omega_{\epsilon,P}
 =
 \{y\in\mathbb{R}^n:\ P+\epsilon y\in\Omega\}.
\]
For \(\Omega=\mathbb{R}^n\), of course,
\(\Omega_{\epsilon,P}=\mathbb{R}^n\).

The limit equation \eqref{BVPinf} plays an important role in this article. Thanks to \cite{LiNi1993,KwongZhang1991,BerestyckiLions1983}, we know  the problem 

\begin{equation*}\label{BVP0} 
 \begin{cases} 
 -\Delta u=u^p-u^q\quad  \text{in} \quad \mathbb{R}^n, \\
        \quad u(x) \rightarrow0, \quad |x|\rightarrow +\infty,\\

\end{cases}
\end{equation*}

\noindent has a unique positive radial  solution that we denote $U$. Thanks to these articles, we also know that $U$ is radially decreasing and smooth.
It is immediate to check  
$$U_{P}(x) = Q(P)^{\frac{1}{p-q}}U\left(Q(P)^{\frac{p-1}{2(p-q)}}x\right)$$ solves

 \begin{equation*}\label{BVP0} 
 \begin{cases} 
 -\Delta u=u^p-Q(P)u^q\quad  \text{in} \quad \mathbb{R}^n,  \\
        \quad u(x) \to 0 & \text{as } |x| \to +\infty.\\

\end{cases}
\end{equation*} 
For simplicity we denote $a=\frac{1}{p-q}$ and $b=\frac{p-1}{2(p-q)}$ so that
$$U_{P}(x) = Q(P)^{a}U\left(Q(P)^bx\right).$$

\noindent In addition, the function $U$ and its derivatives decrease fast enough at infinity to allow us to use our method. 

Indeed, it was proved in \cite{DancerSantra2010} that because $q>q^*=\frac{n}{n-2}$, we have  
 
$$ U(|x|) = O\left(\frac{1}{|x|^\alpha}\right) $$
 where $\alpha = n-2$.
 
\noindent Thus, there exists a constant $C$ (which depends on $q$), such that
$$ 0\leq U(x) \leq \frac{C}{1+|x|^{\alpha}}$$

In \cite{DancerSantra2010}, the authors also prove that 
$$U_r = O\left(\frac{1}{1+r^{\alpha+1}}\right)$$
where $U_r$ is the radial derivative of the function $U$. Using the equation defining $U$ and the expression of the Laplacian in polar coordinates, we deduce the similar inequalities on the higher order derivatives. 
\medskip

 Finally, we can associate to Equation \eqref{BVPinf} the natural linearized operator $L_P:Z\mapsto -\Delta Z-(pU_P^{p-1}Z-qQ(P)U_P^{q-1}Z)$. During our work we will also associate a natural linearized operator $L_{\epsilon,P}$ to Equation \eqref{BVP1} and will in some sense show that $L_{\epsilon,P}$ is near to $L_P$ when $\epsilon$ tends to $0$. Then the study of $L_P$ plays an important role and the following result proved for example in \cite{DancerSantra2010} is fundamental. 
$\\$

\begin{lemma}\label{non-degeneracy of the solution}
Consider $P\in \mathbb{R}^n$. The solutions in $D^{1,2}(\mathbb{R}^n)=\left\{u \in L^{2^*}(\mathbb{R}^n) | \nabla u \in L^2(\mathbb{R}^n) \right \}$ of the linearized problem 
\begin{equation*}\label{linear} 
   -\Delta Z=pU_P^{p-1}Z-qQ(P)U_P^{q-1}Z\quad  \text{in} \quad \mathbb{R}^n.
\end{equation*} 
are linear combinations of 
$\partial U_P\over\partial x_j$,  $j=1,\dots,n$. 
\end{lemma}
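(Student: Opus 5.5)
First rescale to reduce to the case $Q(P)=1$. Since $U_P(x)=Q(P)^aU(Q(P)^bx)$, the substitution $Z(x)=W(Q(P)^bx)$ turns the linearized equation for $U_P$ into
\[
-\Delta W=pU^{p-1}W-qU^{q-1}W .
\]
The identities $a(p-1)=2b$ and $a(q-1)+1=2b$ make the powers of $Q(P)$ cancel, and $D^{1,2}$ is invariant under dilations. So it suffices to prove that every $W\in D^{1,2}(\mathbb{R}^n)$ solving $LW:=-\Delta W-(pU^{p-1}-qU^{q-1})W=0$ is a combination of $\partial_jU$.

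The plan is the classical spherical harmonic decomposition. First I would establish regularity and decay for $W$. The potential $V=pU^{p-1}-qU^{q-1}$ is bounded and smooth, so elliptic regularity makes $W$ smooth. Since $|V|\le C(1+|x|)^{-(n-2)(q-1)}$ and $(n-2)(q-1)>2$ for $q>n/(n-2)$, a Moser iteration or Kelvin transform argument should give $|W(x)|\le C|x|^{2-n}$. Next I would expand $W(r\theta)=\sum_{k\ge0}\sum_i w_{k,i}(r)Y_{k,i}(\theta)$ in spherical harmonics. Each coefficient solves the ODE
\[
w''+\tfrac{n-1}{r}w'-\tfrac{k(k+n-2)}{r^2}w+V(r)w=0,
\]
and satisfies $w(r)=O(r^k)$ at $0$ and $w\to0$ at infinity.

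For $k=1$, the function $U'(r)<0$ solves this ODE; it is the radial profile of $\partial_jU$. If a second decaying solution $w$ existed, the Wronskian $r^{n-1}(wU''-w'U')$ would be constant. It vanishes at infinity by the decay $U'=O(r^{1-n})$, hence it vanishes identically, so $w$ is a multiple of $U'$. For $k\ge2$, let $w$ be a decaying solution regular at $0$, and compare it with $U'$ through Sturm comparison. Because $U'<0$ on $(0,\infty)$, and the potential $-k(k+n-2)/r^2+V$ is strictly smaller than the $k=1$ potential, an integration of $(r^{n-1}(w'U'-wU''))'$ over a nodal interval of $w$ gives a contradiction unless $w\equiv0$. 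The same computation on $(0,\infty)$ handles the case where $w$ does not change sign. The boundary terms vanish thanks to the decay established in the first step.

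The main obstacle is the radial mode $k=0$: I must show that $LW=0$ has no nonzero radial solution in $D^{1,2}$. Here I would use the known nondegeneracy coming from the uniqueness theory cited earlier, namely Kwong–Zhang and Li–Ni for $-\Delta u=u^p-u^q$. Their proofs show that the solution $w$ of the variational equation along the shooting parameter, $w=\partial_\alpha u(r;\alpha)$ with $w(0)=1$, does not decay to $0$. If I wanted a self-contained argument, I would suppose that a radial decaying $w$ exists and build test functions from $rU'$, $U$, and their combination. The relevant identities are $L(U)=-(p-1)U^p+(q-1)U^q$ and $L(rU')=-2\Delta U=2(U^p-U^q)$, obtained from scaling. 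Pairing these with $w$ gives orthogonality relations. These relations should contradict a Sturm-type count of the zeros of $w$, since $w$ has exactly one zero by the analysis of Kwong–Zhang. I expect this step to be delicate, and the zero-mass setting (algebraic rather than exponential tails) is exactly where one needs $q>n/(n-2)$ to justify the boundary terms. Concluding, $W$ lies in the span of $\{\partial_jU\}$, and undoing the scaling gives the lemma for $U_P$.
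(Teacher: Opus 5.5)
Your scaling reduction is correct: indeed $a(p-1)=2b$ and $a(q-1)+1=2b$. Your treatment of the nonradial modes is the standard one and is sound. For $k=1$ the solutions regular at $0$ are multiples of $U'$. For $k\ge2$, the identity $\bigl(r^{n-1}(w'U'-wU'')\bigr)'=(\lambda_k-\lambda_1)r^{n-3}wU'$, with $U'<0$, rules out a first zero of $w$ and then a solution of constant sign. The boundary terms vanish because $q>n/(n-2)$ gives $V=O(r^{-2-\delta})$. This is more than the paper does, since it simply imports the whole lemma from Dancer--Santra.

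The gap is the radial mode $k=0$, which is the only part with real content, and your paragraph on it does not give an argument. Every radial solution of $Lw=0$ in $D^{1,2}$ is smooth, hence a multiple of the shooting derivative $\partial_\alpha u(\cdot;\alpha_0)$. Every radial solution behaves like $c_1+c_2r^{2-n}$ at infinity and belongs to $D^{1,2}$ exactly when $c_1=0$. So what must be shown is $c_1\neq0$, and this does not follow from the uniqueness of $U$. Saying that the uniqueness proofs ``show'' it, or that $w$ has exactly one zero ``by the analysis of Kwong--Zhang'', cites the conclusion rather than proving it. Your self-contained alternative likewise stops at ``should contradict''.

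In fact the identities you wrote down almost finish the proof, once you isolate what is really missing. Let $w\in D^{1,2}$ be radial with $Lw=0$. Then $w=O(r^{2-n})$ and $w'=O(r^{1-n})$, so Green's identity against $rU'$ and $U$ has vanishing boundary terms. It gives $\int w(U^p-U^q)=0$ and $\int w\bigl((p-1)U^p-(q-1)U^q\bigr)=0$, hence $\int wU^p=\int wU^q=0$ since $p\neq q$. Thus $w$ changes sign. If it changed sign only once, at $r_0$, then $wU^q\bigl(U^{p-q}-U(r_0)^{p-q}\bigr)$ would have constant sign, because $U$ is strictly decreasing, and zero integral, which is impossible.

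The missing ingredient is therefore the bound ``$w$ has at most one zero''. One way to get it is through the Morse index of $U$. $U$ minimizes the energy on the Nehari manifold: each fibre $t\mapsto E(tu)$ has a unique critical point on $(0,\infty)$, a maximum, and a symmetrized minimizer is identified with $U$ by uniqueness. Hence the quadratic form $\mathcal Q$ of $L$ is nonnegative on $\{h:\mathcal B(U,h)=0\}$, while $\mathcal Q(U)=(q-1)\int U^{q+1}-(p-1)\int U^{p+1}<0$. Suppose $w$ had two zeros. A nontrivial combination $v$ of two of its nodal pieces with $\mathcal B(U,v)=0$ would satisfy $\mathcal Q(v)=0$, hence $Lv=\mu LU$ weakly. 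Since $v$ vanishes on the remaining nodal interval, and $LU=U^q\bigl((q-1)-(p-1)U^{p-q}\bigr)$ vanishes at most at one radius, $\mu=0$. Then $v\equiv0$ by ODE uniqueness, a contradiction.

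Without an argument of this kind, or a precise reference to a nondegeneracy theorem such as the one the paper takes from Dancer--Santra, the radial step is not established, and with it the lemma.
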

\begin{center}
\section{Case $\Omega = \mathbb{R}^n$}

\end{center}

\noindent In this section, we suppose $\Omega=\mathbb{R}^n$. In this case, Equation \eqref{Main} becomes
\begin{equation}\label{Main-unbounded} 
 \tag{$6$}
 \begin{cases} 
    -\epsilon^2\Delta u(x)=u^p(x)-Q(x)u^q(x), \quad &\text{in} \quad \mathbb{R}^n\\
        \quad u > 0  \quad &\text{in}  \quad \mathbb{R}^n. \\
       \quad u(x) \to 0 & \text{as } |x| \to +\infty.
\end{cases}
\end{equation}

\subsection{Choice of the space}

 In our study, we will work with the space $$D^{1,2}(\mathbb{R}^n) = \left\{u \in L^{2^*}(\mathbb{R}^n) | \nabla u \in L^2(\mathbb{R}^n) \right \}.$$
$\\$ We can define a scalar product on $D^{1,2}(\mathbb{R}^n)$ as follows: 
$\forall u,v \in D^{1,2}(\mathbb{R}^n),\langle u,v\rangle =\int_{\mathbb{R}^n}\nabla u(x)\nabla v(x) dx$.
Recall that we have thanks to the Gagliardo-Nirenberg-Sobolev inequality for $u \in D^{1,2}(\mathbb{R}^n)$

$$\|u\|_{L^{2^*}} \leq C_n\|\nabla u\|_{L^2(\mathbb{R}^n)}.$$

\noindent Then, the norm of the space $D^{1,2}(\mathbb{R}^n)$ can control the norm of $L^{\frac{2n}{n-2}} (\mathbb{R}^n)$. Nonetheless, we will need in our calculations to control some other norms $L^s$ that will not be possible with only the norm of  $D^{1,2}(\mathbb{R}^n)$ because $\mathbb{R}^n$ is not bounded. To solve this problem, we need to introduce a suitable $s$  and work on the space $L^s(\mathbb{R}^n)\cap D^{1,2}(\mathbb{R}^n)$.
The further calculations force us to choose $s \in \mathbb{R}$ such that $max\left(\frac{n}{n-2},\frac{n}{(n-2)q-3}\right )<s\leq min\left(\frac{n(q-1)}{2},\frac{2n}{n+2}\right )$. It is possible because 
\begin{itemize}
    \item  $q > q^*$ so that $\frac{n(q-1)}{2}>\frac{n}{n-2}$;
    \item $n\geq9$ so that $\frac{2n}{n+2} > \frac{n}{n-2}$;
    \item $q >q^*$ so $\frac{n}{(n-2)q-3}\leq \frac{n}{n-3} < \frac{2n}{n+2} $ because $n\geq9$;
    \item $\frac{n}{(n-2)q-3} < \frac{n(q-1)}{2}$. Indeed this inequality is equivalent to:
    $(n-2)q^2-(n+1)q+1>0$. But discriminant associated is $(n+1)^2-4(n-2)=(n-1)^2+8$ and the biggest root of this polynomial  is $\frac{(n+1)+\sqrt{(n-1)^2+8}}{2(n-2)}$.  One has by hypothesis 
    $q>q_1 =\frac{(n+1)+\sqrt{(n-1)^2+8}}{2(n-2)}.$

\end{itemize}
   $\\$

 In the rest of the section, we will work on $X= L^{s}(\mathbb{R}^n) \cap D^{1,2}(\mathbb{R}^n)$ associated with the norm
$\|u\|_X= max\left\{\|u\|_{D^{1,2}},\|u\|_{s}\right\}$.
It is clearly a Banach and reflexive space.
$\\$

One of the crucial points is that $U \in X$ (thus $U_P \in X$). Indeed,
\begin{itemize}
   \item In each case, $2(\alpha+1)>n$, because $2(\alpha+1) = 2(n-1)$ and $n>2$. Thus, $U \in D^{1,2}(\mathbb{R}^n)$.

 \item Then, $s$ is chosen so that $U \in L^s(\mathbb{R}^n)$ (because $s >  \frac{n}{n-2}$ so that $\alpha s >n$). 
\end{itemize}

Furthermore, we can use the properties of $X$ to  rewrite the partial differential equation with tools of functional analysis. To achieve this define the embedding $i: D^{1,2}(\mathbb{R}^n) \rightarrow L^{2^*}(\mathbb{R}^n)$ which is continuous thanks to the Gagliardo-Nirenberg-Sobolev inequality.

\medskip
By definition the adjoint $i^*:L^{\frac{2n}{n+2}}(\mathbb{R}^n)\rightarrow D^{1,2}(\mathbb{R}^n)$ of the operator $i$ respects

$$i^*(v)= u \iff \forall \phi \in D^{1,2}(\mathbb{R}^n), \langle u,\phi\rangle=\int_{\mathbb{R}^n} v\phi.$$ 
 $i^*$ is well defined thanks to the Riesz Theorem because $\phi \rightarrow \langle v,\phi\rangle_2$ is continuous. Indeed, by Hölder's inequality
$$|\langle v,\phi\rangle_2|\leq \|v\|_{L^{\frac{2n}{n+2}}}\|\phi\|_{L^{2^*}} \leq C\|\phi\|_{D^{1,2}}.$$

We will then use $i^*$ in the next sections to find a new formulation of our problem. Thus, we need to control $\|i^*(v)\|_X$ when $v$ is smooth enough. We need so to control $\|i^*(v)\|_{D^{1,2}(\mathbb{R}^n)}$ and $\|i^*(v)\|_{L^s(\mathbb{R}^n)}$. We can use the following facts:

\begin{itemize}
    \item $i^*$ is continuous and $$\|i^*(v)\|_{D^{1,2}(\mathbb{R}^n)}\leq C\|v\|_{L^{\frac{2n}{n+2}}(\mathbb{R}^n)}.$$ 
    \item To control $\|i^*(v)\|_s$, using Hardy-Littlewood-Sobolev inequality, we obtain
    \begin{lemma}\label{control L^s}

 If $v \in L^{\frac{2n}{n+2}}(\mathbb{R}^n)\cap L^{{\frac{ns}{n+2s}}}(\mathbb{R}^n)$ then $i^*(v) \in L^s(\mathbb{R}^n)$ and $\|i^*(v)\|_s \leq C\|v\|_{\frac{sn}{n+2s}}$.
\end{lemma}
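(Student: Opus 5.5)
The plan is to identify $i^*(v)$ with the Newtonian potential of $v$ and then apply the Hardy--Littlewood--Sobolev inequality. Let $\Gamma(x)=c_n|x|^{2-n}$ be the fundamental solution of $-\Delta$ in $\mathbb{R}^n$ ($n\ge 3$), and set
\[
 w=\Gamma*v,\qquad\text{so that}\qquad |w|\le c_n\, I_2(|v|),
 \qquad I_2 f(x)=\int_{\mathbb{R}^n}\frac{f(y)}{|x-y|^{n-2}}\,dy .
\]

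\textbf{Step 1: $L^s$ bound for $w$.} The Hardy--Littlewood--Sobolev inequality gives $\|I_2 f\|_{r}\le C\|f\|_{t}$ whenever $1<t<r<\infty$ and $\frac1r=\frac1t-\frac2n$. Take $r=s$ and $t=\frac{ns}{n+2s}$; then $\frac1t=\frac1s+\frac2n$, so the exponent relation holds. Since $s>\frac{n}{n-2}>1$, we have $t>1$, and $t<s$ is clear. Hence
\[
 \|w\|_s\le C\|v\|_{\frac{ns}{n+2s}} .
\]
The same inequality with $t=\frac{2n}{n+2}$ and $r=2^*$ shows $w\in L^{2^*}$, with $\|w\|_{2^*}\le C\|v\|_{\frac{2n}{n+2}}$.

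\textbf{Step 2: $w=i^*(v)$.} This is the main point. First take $v\in C_c^\infty(\mathbb{R}^n)$. Then $w$ is smooth, satisfies $-\Delta w=v$, and $w=O(|x|^{2-n})$, $\nabla w=O(|x|^{1-n})$ at infinity. In particular $\nabla w\in L^2$ because $n\ge 3$, so $w\in D^{1,2}(\mathbb{R}^n)$. For $\phi\in C_c^\infty$, integration by parts gives $\int\nabla w\cdot\nabla\phi=\int v\phi$. By density of $C_c^\infty$ in $D^{1,2}$ and continuity of both sides, this holds for all $\phi\in D^{1,2}$. By the uniqueness in the Riesz representation, $i^*(v)=w$.

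For general $v\in L^{\frac{2n}{n+2}}\cap L^{\frac{ns}{n+2s}}$, choose $v_m\in C_c^\infty$ converging to $v$ in both norms simultaneously, by truncation and mollification. Two limits then hold:
\begin{itemize}
\item by continuity of $i^*$, $i^*(v_m)\to i^*(v)$ in $D^{1,2}$, hence in $L^{2^*}$;
\item by Step 1, $i^*(v_m)=\Gamma*v_m\to\Gamma*v$ in $L^s$ and in $L^{2^*}$.
\end{itemize}
The two limits in $L^{2^*}$ must coincide, so $i^*(v)=\Gamma*v\in L^s$.

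\textbf{Conclusion.} Passing to the limit in Step 1 gives
\[
 \|i^*(v)\|_s\le C\|v\|_{\frac{ns}{n+2s}} .
\]
The only delicate point is the identification in Step 2. It is handled by the approximation argument together with the uniqueness of limits in $L^{2^*}$, which holds because $D^{1,2}$ embeds continuously into $L^{2^*}$.
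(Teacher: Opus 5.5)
Your proof is correct and takes the same route as the paper, which simply invokes the Hardy--Littlewood--Sobolev inequality with exponents $\frac{ns}{n+2s}$ and $s$; this is admissible precisely because $s>\frac{n}{n-2}$. Your Step 2, the approximation argument identifying $i^*(v)$ with the Newtonian potential $\Gamma*v$, supplies a justification that the paper leaves implicit.
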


\end{itemize}
\begin{remark}
    The conditions on $s$ might seem arbitrary. In order to understand the importance of these hypotheses in this article we can draw up the following list of conditions  where these inequalities appear:

    \begin{itemize}
    \item To use Hardy-Littlewood-Sobolev inequality as we did to prove \ref{control L^s}, we need $s>\frac{n}{n-2}$. 
        \item For $\phi \in X$, we need $\|\phi^{q-1}\|_\frac{n}{2} <+\infty$. Then by interpolation it suffices $s\leq\frac{n}{2}(q-1)\leq2^*.$
        \item  Several times we will use that $\beta(\alpha q-1)>n$ where $\beta = \frac{ns}{n+2s}$ (see for example Subsection 3.4 or Subsection 4.4). This inequality is equivalent to $s>\frac{n}{(n-2)q-3}$ (here we use that $q>q_1$).
        \item If $\phi \in X$, we need, $\phi^p \in L^{\frac{2n}{n+2}}$. We will check later that it suffices $s<\frac{2n}{n+2}$ by interpolation. 
    \end{itemize}
\end{remark}

\subsection{The existence of a multi peaks solution}

We now turn to the proof of Theorem~\ref{MainWholeSpace}. For clarity,
we first present the construction for \(K=2\). The extension to an
arbitrary fixed finite number of peaks follows from the same arguments
and is explained at the end of Section 3.5. In the case \(K=2\), the
statement reduces to
 \medskip
\medskip
\begin{theorem}\label{Main 2 peaks}
Suppose $q\in(q_1,p)$ and $n\geq9$.
Suppose that $\xi_1,\xi_2$ are $2$ different
non-degenerate critical points of $Q$. Then there exists $\epsilon_0 > 0$ such that, for all $0 < \epsilon < \epsilon_0$,
problem \eqref{Main} admits a positive solution $u_{\epsilon}$ which concentrates at $(\xi_k)_{k \in \{1,2\}}$.

\end{theorem}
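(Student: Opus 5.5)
We follow the Lyapunov--Schmidt scheme in the space \(X=L^s(\mathbb{R}^n)\cap D^{1,2}(\mathbb{R}^n)\). Work in the rescaled variable \(y=x/\epsilon\). For \(P=(P_1,P_2)\) with \(|P_k-\xi_k|\le\delta\), where \(\delta\) is small and fixed, take the ansatz
\[
W_{\epsilon,P}(y)=U_{P_1}\!\left(y-\tfrac{P_1}{\epsilon}\right)+U_{P_2}\!\left(y-\tfrac{P_2}{\epsilon}\right).
\]
Look for a solution \(u=W_{\epsilon,P}+\phi\) with \(\phi\) in the complement \(K_{\epsilon,P}^\perp\), taken with respect to \(\langle\cdot,\cdot\rangle\), of
\[
K_{\epsilon,P}=\operatorname{span}\Bigl\{\partial_{j}U_{P_k}\bigl(\cdot-\tfrac{P_k}{\epsilon}\bigr):\ j\le n,\ k=1,2\Bigr\}.
\]
Using \(i^*\), the equation \(-\Delta u=f_\epsilon(y,u_+)\), with \(f_\epsilon(y,u)=u^p-Q(\epsilon y)u^q\), becomes \(u=i^*(f_\epsilon(y,u_+))\). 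Let \(\Pi^\perp\) and \(\Pi\) be the projections onto \(K^\perp\) and onto \(K\). We split the problem into the auxiliary equation
\[
\Pi^\perp\bigl[u-i^*(f_\epsilon(\cdot,u_+))\bigr]=0
\]
and the bifurcation equation \(\Pi[\cdots]=0\). Positivity of the final solution will follow from the maximum principle, since \(u\) solves the equation with \(u_+\). The concentration properties will come from the smallness \(\|\phi\|_X\to0\), elliptic regularity, and the algebraic decay \(U\le C(1+|y|^{\alpha})^{-1}\).

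\textbf{Step 1 (error estimate).} Write \(R_{\epsilon,P}=W-i^*(f_\epsilon(\cdot,W))\). We need \(\|R_{\epsilon,P}\|_X\to0\) uniformly in \(P\). By Lemma~\ref{control L^s} and continuity of \(i^*\), it suffices to bound the quantity \(f_\epsilon(W)-\sum_k\bigl(U_k^p-Q(P_k)U_k^q\bigr)\) in \(L^{2n/(n+2)}\cap L^{\beta}\), where \(\beta=ns/(n+2s)\). This quantity has two parts:
\begin{itemize}
\item \emph{the potential term} \(\bigl(Q(\epsilon y)-Q(P_k)\bigr)U_k^q\), which is \(O(\epsilon)\) times \(|y-P_k/\epsilon|\,U_k^q\) by the bound on \(\nabla Q\); the extra factor \(|y|\) costs one power of decay, so this needs integrability of \(|y|^{\beta(1-\alpha q)}\), i.e. \(\beta(\alpha q-1)>n\), which is exactly why \(s>n/((n-2)q-3)\);
\item \emph{the interaction terms} such as \(U_1^{p-1}U_2\) and \(U_1^{q-1}U_2\), which are small because on the support region of \(U_1\) one has \(U_2=O(\epsilon^{\alpha})\), and vice versa. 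We split \(\mathbb{R}^n\) into the half-spaces closer to each center and use the mean value theorem.
\end{itemize}

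\textbf{Step 2 (uniform invertibility, the main obstacle).} Set
\[
L_{\epsilon,P}\phi=\Pi^\perp\bigl[\phi-i^*(f_\epsilon'(\cdot,W)\phi)\bigr].
\]
We must show \(\|L_{\epsilon,P}\phi\|_X\ge c\|\phi\|_X\) on \(K^\perp\), uniformly in \(\epsilon\) and \(P\). We argue by contradiction: take sequences \(\epsilon_m\to0\), \(P^m\), and \(\phi_m\) with \(\|\phi_m\|_X=1\) and \(L\phi_m\to0\). Translate by \(P^m_k/\epsilon_m\) and extract weak limits in \(X\); each limit solves \(L_{\xi_k}Z=0\), and orthogonality passes to the limit. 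By Lemma~\ref{non-degeneracy of the solution}, each limit vanishes. The difficulty is to rule out loss of mass of \(\phi_m\) at infinity or between the peaks, since there is no mass term. We plan to use that the potential \(f'(W)\) decays like \(|y-P_k/\epsilon|^{-\alpha(q-1)}\), with \(\alpha(q-1)>2\), and belongs to \(L^{n/2}\) with small tails. Then \(\int f'(W)\phi_m^2\to0\) by local compact embedding plus Hölder on the tails, which forces \(\|\phi_m\|_{D^{1,2}}\to0\). For the \(L^s\) part, we write \(\phi_m=i^*(f'(W)\phi_m)+o(1)\) plus kernel components and apply Lemma~\ref{control L^s}; this requires \(\|f'(W)\phi_m\|_\beta\to0\), which follows by Hölder from \(W^{q-1}\in L^{n/2}\) and the same tail splitting. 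This contradicts \(\|\phi_m\|_X=1\).

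\textbf{Step 3 (contraction and reduced problem).} With Step 2, a contraction argument on a ball of radius \(C\|R_{\epsilon,P}\|_X\) in \(K^\perp\) gives \(\phi_{\epsilon,P}\), continuous in \(P\). The superlinear remainder is controlled using \(s\le n(q-1)/2\) and \(s<2n/(n+2)\), so that \(|\phi|^{q}\) and \(|\phi|^{p}\) lie in the right dual spaces. The remaining equations come from testing against \(\partial_jU_{P_k}\). Using evenness and the fact that \(\partial_jU_{P_k}\) lies in the kernel, the main contribution is
\[
-\int\bigl(Q(\epsilon y)-Q(P_k)\bigr)U_k^q\,\partial_jU_k=c_0\,\epsilon\,\partial_jQ(P_k)+o(\epsilon),
\]
with \(c_0=\tfrac1{q+1}\int U_{P_k}^{q+1}>0\). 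To close, we need \(\|\phi\|_X\), the interaction terms, and the second-order Taylor terms of \(Q\) all to be \(o(\epsilon)\); this is where \(n\ge9\) and the range of \(q\) are used. The reduced map \(P\mapsto\epsilon^{-1}(\text{reduced equations})\) then converges uniformly to \(c_0\bigl(\nabla Q(P_1),\nabla Q(P_2)\bigr)\). Since the \(\xi_k\) are nondegenerate, the Brouwer degree on \(B_\delta(\xi_1)\times B_\delta(\xi_2)\) is nonzero, which yields a zero \(P^\epsilon\to(\xi_1,\xi_2)\) and hence the solution.
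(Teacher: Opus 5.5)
Your architecture is the paper's: Lyapunov--Schmidt reduction in $X=L^s\cap D^{1,2}$, a blow-up/contradiction argument for uniform invertibility resting on Lemma~\ref{non-degeneracy of the solution}, a contraction for $\phi$, the integration-by-parts identity giving $\frac{\epsilon}{q+1}\partial_jQ(P_k)\int U_{P_k}^{q+1}$, and a Brouwer degree argument. Two of your choices differ from the paper but are harmless variants of Proposition~\ref{Uniformity}: you linearize at $W$ with $Q(\epsilon y)$ instead of at each peak with $Q(P_k)$ frozen, and you first get $\|\phi_m\|_{D^{1,2}}\to0$ by testing $\langle L\phi_m,\phi_m\rangle$. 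Two small points still need stating. First, the contradiction argument only yields an a priori lower bound, so actually inverting on $K^\perp$ still needs the Fredholm alternative (identity minus compact), as in Corollary~\ref{inversibility}. Second, the blow-up limits solve the linearized equation at the limit points $A_k$ of $P^m_k$, not at $\xi_k$.

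The step that fails as written is the end of Step 3, where you require $\|\phi\|_X=o(\epsilon)$. Your contraction runs in a ball of radius $C\|R_{\epsilon,P}\|_X$, and by your own Step 1 the potential error $i^*\bigl((Q(\epsilon y)-Q(P_k))U_k^q\bigr)$ is of exact order $\epsilon|\nabla Q(P_k)|$. Its $K^\perp$-component does not vanish, because $y_jU^q$ is not a multiple of $-\Delta\partial_jU$ (their decay rates at infinity differ). So, uniformly on $\bar B_\delta(\xi_1)\times\bar B_\delta(\xi_2)$, you only get $\|\phi\|_X=O(\epsilon)$; this is exactly what the paper proves with the ball $B(0,N\epsilon)$ in Proposition~\ref{Stability2}.

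The argument still closes, and the reason is what you must make explicit: $\phi$ never enters the reduced equations at order $\|\phi\|_X$. Since $\phi\in K^\perp$ and $-\Delta\partial_jU_k=\bigl(pU_k^{p-1}-qQ(P_k)U_k^{q-1}\bigr)\partial_jU_k$, both $\langle\phi,\partial_jU_k\rangle$ and $\int\bigl(pU_k^{p-1}-qQ(P_k)U_k^{q-1}\bigr)\phi\,\partial_jU_k$ vanish exactly. Hence the part of $\langle u-i^*(f_\epsilon(\cdot,u_+)),\partial_jU_k\rangle$ that is linear in $\phi$ reduces to $-\int\bigl(pW^{p-1}-qQ(\epsilon y)W^{q-1}-pU_k^{p-1}+qQ(P_k)U_k^{q-1}\bigr)\phi\,\partial_jU_k$. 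This is bounded by $\bigl(C\epsilon+o(1)\bigr)\|\phi\|_X$: the variation of $Q$ costs $\epsilon\,\bigl\||y|\,U^{q-1}|\nabla U|\bigr\|_{2n/(n+2)}$, and the other peak contributes an $o(1)$ factor. The superlinear remainder contributes $O(\|\phi\|_X^q)$. With $\|\phi\|_X=O(\epsilon)$ and $q>1$, all of these are $o(\epsilon)$, exactly as in the proof of Proposition~\ref{Brouwer 2}. So only the interaction and Taylor-remainder errors need to be $o(\epsilon)$, not $\phi$ itself; with that correction your plan goes through.
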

\medskip 

\noindent Until the end we consider $\xi_1, \xi_2\in \mathbb{R}^n$  \noindent two non-degenerate critical points of $Q$. 

 \noindent  Recall that the equation \eqref{Main} can be written as

\begin{equation}\label{BVP1-R} 
 \tag{$7$}
 \begin{cases} 
    -\Delta u(x)=u^p(x)-Q(\epsilon x)u^q(x) \quad &\text{in} \quad \mathbb{R}^n, \\
        \quad u > 0  \quad &\text{in}  \quad \mathbb{R}^n,\\
        \quad u(x)\rightarrow0,|x|\rightarrow +\infty,
\end{cases}
\end{equation} 
We will rather study the problem
\begin{equation}\label{Alternative} 
 \tag{$8$}
 \begin{cases} 
    -\Delta u(x)=f(u)(x)-Q(\epsilon x)g(u)(x) \quad &\text{in} \quad \mathbb{R}^n, \\
    \quad u(x)\rightarrow0,|x|\rightarrow +\infty,\\
          \quad u \in X,
\end{cases}
\end{equation} 

with $f(x)=(x^+)^p$ and $g(x)=(x^+)^q$ ( see that the use of the positive part helps erasing the condition of positivity).
For a fixed $\epsilon > 0$, we expect to find a solution with the form $$W_{\epsilon,P}= \sum_{k=1}^2 U_{\epsilon,P_k} + \phi_{\epsilon,P} .$$ with $U_{\epsilon,P}=U_P(x-\frac{P}{\epsilon})$, $K_{\epsilon,P}=Span\{\frac{\partial U_{\epsilon,k}}{\partial x_j},k\in \{1,2\},j\in\{1,...,n\}\}$ and $\phi_{\epsilon,P}\in K_{\epsilon,P}^\perp=\{v \in X, \langle v|\frac{\partial U_{\epsilon,k}}{\partial x_j}\rangle_{D^{1,2}(\mathbb{R}^n)}=0, k\in \{1,2\} ,j\in  \{1,\ldots,n\}\}$, $P_{k} \in \mathbb{R}^n$ and  with $P\in \mathbb{R}^{2n}$ denoting the vector $(P_1,P_2)$. 
$\\$
In the following, we will write $U_{\epsilon,k}$ to mean $U_{\epsilon,P_k}$ and $S_{\epsilon,P}=\sum_{k=1}^2 U_{\epsilon,k}$ and we will consider $P \in \mathbb{R}^{2n}$.  
The equation \eqref{Alternative} becomes
 
\begin{equation}\label{Main-single}\tag{$9$}
W = i^*\bigl(f(W)-g(W)Q_\varepsilon\bigr)
\end{equation}

with $Q_\epsilon(x)=Q(\epsilon x)$.
This is well defined because 
\begin{itemize}
    \item $Q$ is bounded;
    \item $f(W) \in L^{\frac{2n}{n+2}}(\mathbb{R}^n)$. Indeed, one has $W \in L^{\frac{2n}{n-2}}$ and $W \in L^{s}$. Then by interpolation because $s \leq p\frac{2n} {n+2}\leq \frac{2n}{n-2}$, $W^p \in L^{\frac{2n}{n+2}}(\mathbb{R}^n)$. Indeed, $p \geq 1$ thus $ p\frac{2n}{n+2} \geq \frac{2n}{n+2}  \geq s$ and $\frac{n+2}{n-2} \geq p$ thus $\frac{2n}{n-2} \geq p\frac{2n}{n+2}$.
    \item By the same arguments, $Q_{\epsilon} W^q \in L^{\frac{2n}{n+2}}(\mathbb{R}^n)$ ( recall that $Q$ is bounded).
\end{itemize}
In order to find a strong solution of \eqref{Alternative} and then \eqref{BVP1-R}, we will use the classical Lyapunov-Schmidt reduction. We can sum up this method as follows

\begin{itemize}
    \item Denoting $\Pi_{\epsilon,P}:X\rightarrow X$ the orthogonal projection on $K_{\epsilon,P}^\perp$ in $D^{1,2}(\mathbb{R}^n)$, the equation \eqref{Main-single} can be divided in two parts 
    \begin{equation*}\label{BVP2-R} 
  \begin{cases} 
\Pi_{\epsilon,P}(W_{\epsilon,P} - i^*\bigl(f(W_{\epsilon,P})-g(W_{\epsilon,P})Q_\epsilon\bigr))=0    \quad \text{(the auxiliary equation)} \\
     \forall k \in \{1,2\},j \in \{1,...,n\},\left\langle 
    W_{\epsilon,P} - i^*((f(W_{\epsilon,P})-g(W_{\epsilon,P})Q_\epsilon)| \frac{\partial U_{\epsilon,k}}{\partial x_j}\right\rangle=0\quad\text{ (the bifurcation equation)} \\
   \end{cases} 
    \end{equation*}

\item Firstly using the definition of $K_{\epsilon,P}$, we rewrite the auxiliary equation as a fixed point problem $\phi=T_{\epsilon,P}\phi$  for all $P$ in good compact set $E$ and $\epsilon$ small enough. We can then use the theorem of Banach to solve a solution $W_{\epsilon,P} \in K_{\epsilon,P}^\perp$.
\item Then we can play with the parameter $P$ to solve the bifurcation equation. Using the theory of the degree of Brouwer, we can find a good sequence of $P_{\epsilon}$ such that $W_{\epsilon,P_{\epsilon}}$ solve the bifurcation equation and $P_{\epsilon,k}\rightarrow \xi_k$.

\item Finally we show that $W_{\epsilon,P_{\epsilon}}>0$, $W_{\epsilon,P_{\epsilon}}$ is regular enough, tends to $0$ at infinity and this solution concentrates.

\end{itemize}

\bigskip

\begin{remark}
Through this paper we will often use the following fact: suppose that $A$ belongs to a compact set $E$ and consider $\beta >0$ such that $U \in L^{\beta}(\mathbb{R}^n)$. Then $$||U_{\epsilon,A}||_{\beta}\leq Q(A)^{a-\frac{bn}{\beta}}||U||_\beta\leq C||U||_\beta$$ which $C$ depending only on $E$ because $U_{\epsilon,A}(x)=Q(A)^aU(Q(A)^b\left(x-\frac{A}{\epsilon})\right)$ and $Q(x)>0$.
\end{remark}

$\\$

\subsection{Solving the auxiliary equation}

\subsubsection{Decomposition of the equation}

We rewrite Equation \eqref{Main-single} as follows:

$$W=N_f(\phi) + N_g(\phi) + I_f + I_g+ I_{f'} + I_{g'} +\sum i^*(g'(U_{\epsilon,k})\phi(Q(P_k)-Q_{\epsilon}))+\sum i^*((f'(U_{\epsilon,k})-g'(U_{\epsilon,k})Q(P_k))\phi)+$$$$  \sum i^*(f(U_{\epsilon,k})-Q(P_k)g(U_{\epsilon,k}))+\sum i^*((Q(P_k)-Q_\epsilon) g(U_{\epsilon,k})) $$

$$\left\{
\begin{array}{ll}
      N_f(\phi)=i^*((f(S_{\epsilon,P}+\phi)-f(S_{\epsilon,P})-f'(S_{\epsilon,P})\phi))\\
        N_g(\phi) = -i^*(Q_{\epsilon}[g(S_{\epsilon,P}+\phi)-g(S_{\epsilon,P})-g'(S_{\epsilon,P})\phi])\\
        
       I_f= i^*(f(S_{\epsilon,P})- \sum f(U_{\epsilon,k}))\\ I_g= -i^*(Q_{\epsilon}(g(S_{\epsilon,P})- \sum g(U_{\epsilon,k})))\\
       I_{f'}=i^*((f'(S_{\epsilon,P}) - \sum f'(U_{\epsilon,k}))\phi) \\
       I_{g'}= -i^*((g'(S_{\epsilon,P}) - \sum g'(U_{\epsilon,k}))Q_{\epsilon}\phi)\\    
\end{array}
\right.  $$

$$$$

Hence, because $S_{\epsilon,P}= \sum i^*(f(U_{\epsilon,k})-Q(P_k)g(U_{\epsilon,k}))$ and $W_{\epsilon,P}=S_{\epsilon,P}+\phi_{\epsilon,P}$

$$\phi = N_f(\phi) + N_g(\phi) + I_f + I_g+ I_{f'} + I_{g'}  +\sum i^*(g'(U_{\epsilon,k})\phi(Q(P_k)-Q_{\epsilon}))+\sum i^*([f'(U_{\epsilon,k})-g'(U_{\epsilon,k})Q(P_k)]\phi)+ $$$$  \sum i^*((Q(P_k)-Q_{\epsilon})g(U_{\epsilon,k})). $$

$$$$
Set
$$L_{\epsilon,P}\phi = \phi - \sum i^*((f'(U_{\epsilon,k})-g'(U_{\epsilon,k})Q(P_k))\phi).$$

\noindent Then we have 

$$L_{\epsilon,P}\phi = N_f(\phi) + N_g(\phi)  + I_f + I_g + I_{f'}(\phi) + I_{g'}(\phi)+\sum i^*((Q(P_k)-Q_{\epsilon})g(U_{\epsilon,k})) $$$$+ \sum i^*(g'(U_{\epsilon,k})\phi(Q(P_k)-Q_{\epsilon})). $$

\subsubsection{Study of $L_{\epsilon,P}$}
Now, we study the linearized operator $L_{\epsilon,P}$ in order to simplify the previous equation and to see how to "invert" this operator. For this, we will show that $L_{\epsilon,P}$ is continuous and is a Fredholm operator.

\begin{lemma}\label{continuity 1}

$L_{\epsilon,P}\phi \in X, \forall \phi \in X$ and $L_{\epsilon,P}$ is continuous on $X$.
\end{lemma}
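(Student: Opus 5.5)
Since $L_{\epsilon,P}$ is linear, it suffices to prove a bound $\|L_{\epsilon,P}\phi\|_X\le C\|\phi\|_X$. The identity part is trivial, so everything reduces to showing, for each $k\in\{1,2\}$, that $\phi\mapsto i^*(h_k\phi)$ maps $X$ boundedly into $X$, where
\[
h_k=f'(U_{\epsilon,k})-Q(P_k)g'(U_{\epsilon,k})=pU_{\epsilon,k}^{p-1}-qQ(P_k)U_{\epsilon,k}^{q-1}.
\]
Note that $U_{\epsilon,k}>0$, so the positive parts play no role here. By the two bullet points preceding the Remark (continuity of $i^*$ from $L^{\frac{2n}{n+2}}$ into $D^{1,2}$, and Lemma~\ref{control L^s}), it is enough to prove
\[
\|h_k\phi\|_{\frac{2n}{n+2}}+\|h_k\phi\|_{\frac{ns}{n+2s}}\le C\|\phi\|_X .
\]
We estimate the two pieces $U_{\epsilon,k}^{p-1}\phi$ and $U_{\epsilon,k}^{q-1}\phi$ separately, using $|Q(P_k)|\le\|Q\|_\infty$.

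For the $L^{\frac{2n}{n+2}}$ norm we use Hölder with $\phi\in L^{2^*}$:
\[
\|U^{r-1}_{\epsilon,k}\phi\|_{\frac{2n}{n+2}}\le \|U_{\epsilon,k}^{r-1}\|_{\frac n2}\,\|\phi\|_{2^*},\qquad r\in\{p,q\}.
\]
This requires $U\in L^{\frac{n(r-1)}{2}}$, which holds when $(n-2)\frac{n(r-1)}{2}>n$, i.e. $r>\frac{n}{n-2}$. Both $p$ and $q$ exceed $q_1>q^*$, so this is satisfied. By the Remark on translations, $\|U_{\epsilon,k}^{r-1}\|_{n/2}$ is bounded uniformly in $\epsilon$ and in $P$ on compact sets; in fact it is independent of $\epsilon$ by translation invariance.

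For the $L^{\frac{ns}{n+2s}}$ norm we use Hölder with $\phi\in L^s$:
\[
\|U_{\epsilon,k}^{r-1}\phi\|_{\frac{ns}{n+2s}}\le \|U_{\epsilon,k}^{r-1}\|_{\frac n2}\,\|\phi\|_s ,
\]
because $\frac{n+2s}{ns}=\frac2n+\frac1s$. The same integrability of $U^{r-1}$ in $L^{n/2}$ therefore suffices, and the estimate closes. Alternatively, one could interpolate using $U\in L^\infty$ together with its decay, but this is not needed.

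The only point requiring care, which I expect to be the main (mild) obstacle, is checking the integrability of $U^{q-1}$ in $L^{n/2}$. It uses the decay $U=O(|x|^{2-n})$ from \cite{DancerSantra2010}, valid since $q>q^*$, together with the boundedness of $U$ near the origin. Uniformity of the constants in $P$ is needed later and follows from $Q(P_k)$ being bounded above and below on compact sets, via the scaling formula $U_P=Q(P)^aU(Q(P)^b\cdot)$. Combining these bounds gives $L_{\epsilon,P}\phi\in X$ with $\|L_{\epsilon,P}\phi\|_X\le(1+C)\|\phi\|_X$, hence $L_{\epsilon,P}$ is continuous on $X$.
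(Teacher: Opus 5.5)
Your proof is correct and is essentially the paper's argument: you reduce, via continuity of $i^*$ and the Hardy--Littlewood--Sobolev bound for $i^*$, to estimating $h_k\phi$ in $L^{\frac{2n}{n+2}}$ and $L^{\frac{ns}{n+2s}}$, and for the second norm you use exactly the paper's H\"older pairing $\|U^{r-1}\|_{\frac n2}\|\phi\|_s$. The only minor difference is the $L^{\frac{2n}{n+2}}$ bound: the paper uses $h_k\in L^\infty$ and interpolates $\phi$ between $L^s$ and $L^{2^*}$ (relying on $s\le\frac{2n}{n+2}$), whereas you pair $h_k\in L^{\frac n2}$ with $\phi\in L^{2^*}$, and both routes are valid.
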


\begin{proof} 
Let $\phi \in X$. We want to show that $\|L_{\epsilon,P}(\phi)\|_s\leq C\|\phi\|_X$ and $\|L_{\epsilon,P}(\phi)\|_{D^{1,2}(\mathbb{R}^n)}\leq C'\|\phi\|_X$. Take $k\in \{1,2\}$. 
\begin{itemize}
\item One has by continuity of $i^*$ $$\|i^*((f'(U_{P_k})-Q(P_k) g'(U_{P_k}))\phi)\|_{D_{1,2}(\mathbb{R}^n)}  \leq \|(f'(U_{P_k})-Q(P_k) g'(U_{P_k}))\phi\|_{L^{\frac{2n}{n+2}}} \leq C \|\phi\|_X$$

because $U_{P_k}$ and $Q$ are bounded and $s \leq \frac{2n}{n+2} \leq \frac{2n}{n-2}$.
\item By Lemma \ref{control L^s} we obtain 
$$\|i^*((f'(U_{P_k})-Q(P_k) g'(U_{P_k}))\phi)\|_{s} \leq \|(f'(U_{P_k})-Q(P_k)g'(U_{P_k}))\phi\|_{\frac{ns}{n+2s}}.$$
$\\$
  By Hölder's inequality  (with $\frac{1}{\frac{sn}{n+2s}}=\frac{2}{n}+\frac{1}{s}$), we have

$$\|U_{P_k}^{q-1}Q(P_k)\phi\|_{\frac{ns}{n+2s}}\leq  \|\phi\|_s\|U_{P_k}^{q-1}\|_{\frac{n}{2}}$$

because $|Q(P_k)| \leq \|Q\|_{\infty}$.

But
 $\alpha(q-1)\frac{n}{2}=(n-2)(q-1)\frac{n}{2} >(n-2)(q^*-1)\frac{n}{2}=n$.

Then, because $U(r)=O\left(\frac{1}{r^{n-2}}\right)$ 

$$\|U_{P_k}^{q-1}Q(P_k)\|_\frac{n}{2}\leq C'.$$
Thus 

$$\|g'(U_{P_k})Q(P_k)\phi\|_{\frac{ns}{n+2s}} \leq  C' \|\phi\|_X.$$

Similarly
$$\|f'(U_{P_k})\phi\|_{\frac{ns}{n+2s}} \leq  C'' \|\phi\|_X.$$
\end{itemize}
Conclusion:
$$\|L_{\epsilon,P}\phi\|_X\leq \|\phi\|_X+\sum_k\|i^*((f'(U_{P_k})-Q(P_k) g'(U_{P_k}))\phi)\|_X\leq M\|\phi\|_X.$$

Thus, 
$L_{\epsilon,P}\phi \in X$. The previous calculations also show that $\|L_{\epsilon,P}\| < +\infty$.
\end{proof}

In addition, we have 

\begin{lemma}\label{Compact}
$\\$

 $Id-L_{\epsilon,P}$ is a compact operator.

\end{lemma}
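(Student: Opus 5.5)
Writing $V_k=f'(U_{\epsilon,k})-Q(P_k)g'(U_{\epsilon,k})=pU_{\epsilon,k}^{p-1}-qQ(P_k)U_{\epsilon,k}^{q-1}$, we have $(Id-L_{\epsilon,P})\phi=\sum_k i^*(V_k\phi)$. Since a finite sum of compact operators is compact, it suffices to show that for each fixed $k$ the map $\phi\mapsto i^*(V_k\phi)$ is compact from $X$ to $X$. As $X$ is reflexive, I will take a bounded sequence $(\phi_m)$ in $X$ and pass to a subsequence with $\phi_m\rightharpoonup\phi$ weakly in $X$, hence weakly in $D^{1,2}(\mathbb{R}^n)$ and in $L^s(\mathbb{R}^n)$. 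By Rellich, $\phi_m\to\phi$ strongly in $L^r(B_R)$ for every $R$ and every $r<2^*$. Setting $\psi_m=\phi_m-\phi$, which is bounded in $X$, I need $\|i^*(V_k\psi_m)\|_X\to0$. By continuity of $i^*$ and Lemma~\ref{control L^s}, this reduces to showing
\[
\|V_k\psi_m\|_{\frac{2n}{n+2}}\to0\quad\text{and}\quad \|V_k\psi_m\|_{\frac{ns}{n+2s}}\to0 .
\]

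For each norm I will split $\mathbb{R}^n=B_R(P_k/\epsilon)\cup B_R(P_k/\epsilon)^c$.

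\textbf{Near the peak.} On the ball, $V_k$ is bounded, so $\|V_k\psi_m\|_{L^{r}(B_R)}\le C\|\psi_m\|_{L^r(B_R)}$. Here $r=\frac{2n}{n+2}$ or $r=\frac{ns}{n+2s}$, and both are $<2^*$. Hence this term tends to $0$ for fixed $R$ by local compactness.

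\textbf{Far from the peak.} This is the step I regard as the main obstacle: uniform smallness of the tail, independent of $m$.

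For the $\frac{ns}{n+2s}$ norm I will reuse the Hölder splitting $\frac{n+2s}{ns}=\frac2n+\frac1s$ from the proof of Lemma~\ref{continuity 1}:
\[
\|V_k\psi_m\|_{L^{\frac{ns}{n+2s}}(B_R^c)}\le \|V_k\|_{L^{n/2}(B_R^c)}\,\|\psi_m\|_s\le C\|V_k\|_{L^{n/2}(B_R^c)} .
\]
We have $|V_k|\le C(U_{\epsilon,k}^{p-1}+U_{\epsilon,k}^{q-1})$, and $U_{\epsilon,k}^{q-1}\in L^{n/2}$ because $(n-2)(q-1)\frac n2>n$ (the same holds for $p-1>q-1$). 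Therefore $\|V_k\|_{L^{n/2}(B_R^c)}\to0$ as $R\to\infty$, uniformly in $m$. The bound is also uniform in $P_k$ in a compact set, by the scaling remark.

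For the $\frac{2n}{n+2}$ norm I will use $\frac{n+2}{2n}=\frac{1}{2^*}+\frac2n$:
\[
\|V_k\psi_m\|_{L^{\frac{2n}{n+2}}(B_R^c)}\le \|V_k\|_{L^{n/2}(B_R^c)}\|\psi_m\|_{2^*}\le C\|V_k\|_{L^{n/2}(B_R^c)} ,
\]
which again tends to $0$ uniformly in $m$.

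\textbf{Conclusion.} Given $\eta>0$, I first choose $R$ so that the tail contributions are less than $\eta$. Then I let $m\to\infty$, which kills the ball contributions. This gives $i^*(V_k\phi_m)\to i^*(V_k\phi)$ in $X$, so each map is compact and hence $Id-L_{\epsilon,P}$ is compact.
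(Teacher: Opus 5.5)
Your proof is correct and follows essentially the same route as the paper. Both arguments reduce to compactness of $\phi\mapsto V_k\phi$ from $X$ into $L^{\frac{2n}{n+2}}\cap L^{\frac{ns}{n+2s}}$, apply Rellich on a ball, and control the tail by the decay of $U$ with the same $\frac{2}{n}+\frac{1}{s}$ H\"older splitting. The differences are cosmetic: you argue with weakly convergent sequences instead of approximating by the compact operators $T_m=1_{B(0,m)}T$ in operator norm, and you bound the $L^{\frac{2n}{n+2}}$ tail by $\|V_k\|_{L^{n/2}(B_R^c)}\|\psi_m\|_{2^*}$ rather than by $\sup_{B_R^c}|V_k|\,\|\psi_m\|_{\frac{2n}{n+2}}$ plus interpolation.
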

\medskip
\begin{proof}
    
$\\$

Since the sum of compact operators is a compact operator, it suffices to show that
\begin{equation*}
\begin{cases}
X \rightarrow X \\
 \phi \mapsto i^*((f'(U_{P_k}) - g'(U_{P_k})Q(P_k))\phi)
\end{cases}
\end{equation*}

is compact for $k\in \{1,2\}$. We will show that $ \phi \mapsto i^*\left(g'(U_{P_k})\phi\right)$ is compact (because $Q(P_k)$ is constant and the further reasoning can be applied to show $ \phi \mapsto i^*(f'(U_{P_k})\phi)$ is compact).
It suffices to show that 
\begin{equation*}
\begin{cases}
T:X \rightarrow L^\frac{2n}{n+2}\left(\mathbb{R}^n\right)\cap L^{\frac{ns}{n+2s}}\left(\mathbb{R}^n\right) \\
 \quad \quad \phi \mapsto  g'(U_{P_k})\phi
\end{cases}
\end{equation*} is compact because $i^*:L^\frac{2n}{n+2}\left(\mathbb{R}^n\right)\cap L^{\frac{ns}{n+2s}}\left(\mathbb{R}^n\right) \rightarrow X$ is continuous. 

\noindent Set $T_m = 1_{B(0,m)}T$.
By the Rellich theorem, $T_m$ is compact. Indeed,
Let $(\phi_j)$ be bounded in $X$. By Hölder's inequality
$\big(\tfrac12=\tfrac1{2^*}+\tfrac1n\big)$ we have
$\|\phi_j\|_{L^2(B(0,m))}\le |B(0,m)|^{1/n}\|\phi_j\|_{2^*}$, so $(\phi_j)$ is
bounded in $H^1(B(0,m))$; by Rellich's theorem, up to a subsequence
$\phi_j\to\phi$ in $L^r(B(0,m))$ for every $r<2^*$. Since
$\frac{ns}{n+2s}<\frac{2n}{n+2}<2<2^*$, this applies to both exponents, and as
$g'(U_{P_k})\in L^\infty(\mathbb{R}^n)$,
\[
\|T_m\phi_j-T_m\phi\|_{\beta}\le \|g'(U_{P_k})\|_\infty\,
\|\phi_j-\phi\|_{L^{\beta}(B(0,m))}\longrightarrow 0,
\qquad \beta\in\Big\{\tfrac{2n}{n+2},\ \tfrac{ns}{n+2s}\Big\}.
\]
Hence $T_m$ is compact.
\smallskip
However, one has $$\|T-T_m\|_{X} \rightarrow 0.$$ Indeed, one has for $\phi \in X$
$$\|(T-T_m)\phi\|^{\frac{2n}{n+2}}_{\frac{2n}{n+2}}=\int_{B(0,m)^c}|g'(U_{P_k})\phi|^{\frac{2n}{n+2}} \leq g'(U_{P_k}(m))^{\frac{2n}{n+2}} \int_{B(0,m)^c}|\phi|^{\frac{2n}{n+2}}\leq C \frac{1}{m^{\alpha(q-1)\frac{2n}{n+2}}} \|\phi\|_X^{\frac{2n}{n+2}}$$
because $U_{P_k}$ is decreasing. Thus,
$$\|T-T_m\|_{\frac{2n}{n+2}} \rightarrow 0.$$

In addition, by Hölder's inequality, 
$$\|(T-T_m)\phi\|_{\frac{ns}{n+2s}}\leq \|\phi\|_{s}\|U_{P_k}^{q-1}\|_{L^{\frac{n}{2}}(B(0,m)^c)}\leq C\|\phi\|_X\frac{1}{m^{{(q-1)(n-2)}-2}}$$
 
which concludes the proof because ${(q-1)(n-2)-2} >0$ .
$\\$$\\$
\end{proof}

We need the last lemma 
\begin{lemma}\label{ContinuityP}
$\\$

Consider a compact set $E$. Then $P\in E \rightarrow  L_{P} \in L(X)$ is continuous where $L_P\phi = \phi -i^*((f'(U_P)-g'(U_P)Q(P))\phi)$. 
$$\\$$
\end{lemma}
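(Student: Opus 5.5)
Since $L_P - L_{P'} = i^*\bigl(h_{P'}\,\cdot\bigr) - i^*\bigl(h_P\,\cdot\bigr)$ with
\[
h_P = f'(U_P) - Q(P)\,g'(U_P) = pU_P^{p-1} - qQ(P)U_P^{q-1},
\]
the plan is to bound the operator norm of the multiplication operator $\phi\mapsto (h_P-h_{P'})\phi$ from $X$ into $L^{\frac{2n}{n+2}}\cap L^{\frac{ns}{n+2s}}$. Then I would compose with $i^*$, which is continuous on this space by its continuity from $L^{\frac{2n}{n+2}}$ to $D^{1,2}$ together with Lemma \ref{control L^s}. Exactly as in Lemma \ref{continuity 1}, Hölder's inequality gives
\[
\|(h_P-h_{P'})\phi\|_{\frac{ns}{n+2s}}\le \|h_P-h_{P'}\|_{\frac n2}\|\phi\|_s .
\]
For the other exponent, since $\frac{n+2}{2n}=\frac1{2^*}+\frac1{n/2}$, we likewise get
\[
\|(h_P-h_{P'})\phi\|_{\frac{2n}{n+2}}\le \|h_P-h_{P'}\|_{\frac n2}\|\phi\|_{2^*}.
\]
So it suffices to prove that $P\mapsto h_P\in L^{n/2}(\mathbb{R}^n)$ is continuous on $E$.

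\textbf{Reduction to convergence of $h_P$.} Write $U_P(x)=Q(P)^aU(Q(P)^bx)$. As $P'\to P$ in $E$, we have $Q(P')\to Q(P)$ by continuity of $Q$, and both values stay in a compact subset of $(0,\infty)$ because $Q>0$ on $E$. Since $U$ is continuous, $U_{P'}\to U_P$ pointwise, and therefore $h_{P'}\to h_P$ pointwise.

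\textbf{Uniform domination.} I would next find a single integrable majorant. Let $\lambda_{\min}=\min_E Q^b>0$. Since $U$ is radially decreasing and $Q(P')^a\le C$, we get $U_{P'}(x)\le C\,U(\lambda_{\min}x)\le C(1+|x|^{\alpha})^{-1}$. Hence
\[
|h_{P'}|^{n/2}\le C\bigl(1+|x|\bigr)^{-\alpha(q-1)\frac n2},
\]
where we used $p>q$ and that $\|Q\|_\infty$ is finite. The exponent satisfies $\alpha(q-1)\frac n2>n$ because $q>q^*$, as already checked in Lemma \ref{continuity 1}, so the majorant is integrable. Dominated convergence then gives $\|h_{P'}-h_P\|_{n/2}\to0$.

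\textbf{Conclusion.} Combining the two Hölder estimates with this convergence yields $\|L_{P'}-L_P\|_{L(X)}\le C\|h_{P'}-h_P\|_{n/2}\to0$, which proves the continuity of $P\mapsto L_P$. The only delicate point is the uniform domination step, namely the use of monotonicity of $U$ together with the positive lower bound of $Q$ on the compact set $E$. Since $P$ enters $U_P$ only through the scalars $Q(P)^a$ and $Q(P)^b$ and not through a translation, there are no tail terms moving off to infinity that would need separate treatment.
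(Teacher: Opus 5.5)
Your proof is correct and follows essentially the same route as the paper. Both arguments use H\"older's inequality to reduce the operator-norm estimate to convergence of the potential in a Lebesgue norm, and then apply dominated convergence with the majorant $C\,U(\nu^b x)^{q-1}$, which comes from the monotonicity of $U$ and $Q\ge\nu>0$ on $E$. The differences are cosmetic. The paper uses an auxiliary exponent $t<2^*$ for the $L^{\frac{2n}{n+2}}$ estimate and treats the $Q(P)-Q(P_m)$ term separately. You take $t=2^*$ directly, so both norms reduce to $\|h_P-h_{P'}\|_{n/2}$; this is legitimate because $\alpha(q-1)\frac n2>n$. You also absorb the $Q(P)-Q(P_m)$ term into $h_P$.
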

\begin{proof}

Set $\nu:=inf_{x \in E}Q(x)$.
Let $P_m \rightarrow P$. Take $\phi \in X$. One has,

$$\|(L_{P}-L_{P_m})\phi\|_X \leq \|i^*((f'(U_{P})-f'(U_{P_m}))\phi)\|_X+ \|i^*((Q(P)-Q(P_m))g'(U_P)\phi)\|_X$$$$+ \|i^*((Q(P_m)(g'(U_P)-g'(U_{P_m}))\phi)\|_X.$$
 Take a real $t$ such that $ \frac{2n}{n+2}<t < 2^*$. By
 Hölder's inequality applied with the decomposition $\frac{1}{\frac{2n}{n+2}}=\frac{1}{t}+\frac{1}{\frac{t2n}{(n+2)t-2n}}$, we obtain  (possible because $t > \frac{2n}{n+2}$ then $\frac{t2n}{(n+2)t-2n}>0$) 
$$\|(Q(P)(g'(U_P)-g'(U_{P_m}))\phi\|_{\frac{2n}{n+2}}\leq \| \phi\|_{t}\|g'(U_P)-g'(U_{P_m})\|_l$$ 
with $l= \frac{t2n}{(n+2)t-2n}$ such that $U^{(q-1)l} \in L^1$. Indeed,
$\alpha(q-1)\frac{t2n}{(n+2)t-2n} >n $ because $t <2^*$ thus $2\frac{t2n}{(n+2)t-2n}>n$.
But, $\|\phi\|_t \leq C \|\phi\|_X$. Indeed, $t < \frac{2n}{n-2}$ and $t\geq  \frac{2n}{n+2} \geq s$ and we conclude by interpolation. In addition, by the dominated convergence Theorem,
$$\int_{\mathbb{R}^n}|g'(U_P)-g'(U_{P_m})|^{l} \rightarrow 0$$
Indeed using that $(x+y)^l \leq 2^lmax(x,y)^l\leq 2^l(x^l+y^l)$ for all $x,y>0$, one has
$$|g'(U_P)-g'(U_{P_m})|^{l}\leq C(|g'(U_{P})|^l+|g'(U_{P_m})|^l)=C(|g'(Q(P)^aU(Q(P)^bx))|^l+|g'(Q(P_m)^aU(Q(P_m)^bx))|^l)$$
$$ \leq 2C|g'(M^aU(\nu^bx))|^l$$ 

with $M=\|Q\|_{\infty}$ because $Q(P) \in [\nu,M]$ for all $P\in E$ and $U$  is decreasing. 

$\\$
Likewise,
$$\|(Q(P)(g'(U_P)-g'(U_{P_m}))\phi\|_{\frac{ns}{n+2s}} \leq C\|g'(U_P)-g'(U_{P_m})\|_\frac{n}{2}\|\phi\|_X\quad \text{and} \quad \|g'(U_P)-g'(U_{P_m})\|_\frac{n}{2} \rightarrow 0$$

 \noindent  Conclusion, 
$\|i^*((Q(P)(g'(U_P)-g'(U_{P_m}).)\|_X \rightarrow 0$.

 \noindent  By the same arguments, we deduce
$$\|i^*((f'(U_{P})-f'(U_{P_m})).)\|_{L(X)}\rightarrow 0.$$
Using the fact that $Q$ is continuous and bounded, one has,

$$\|i^*((Q(P)-Q(P_m))g'(U_P).)\|_{X} \rightarrow 0.$$

 \noindent  Conclusion,
$$\|L_{P} -L_{P_m}\| \rightarrow 0$$

$\\$
\end{proof}
Now introduce $\Pi_{\epsilon,P}:X \rightarrow X$ the orthogonal projection on $K_{\epsilon,P}^\perp$ (for the scalar product of $D_{1,2}(\mathbb{R}^n)$).
Now we want to show that   $\tilde{L}_{\epsilon,P}:=\Pi_{\epsilon,P}L_{\epsilon,P}:K_{\epsilon,P}^\perp \rightarrow K_{\epsilon,P}^\perp$ is invertible, its inverse is continuous and, most importantly, the norm of its inverse can be controlled by a constant independent of $P$ and $\epsilon$. To achieve this goal, we need 

\medskip
\begin{lemma}\label{Projection prop}

Consider two compact sets $E_1$ and $E_2$  of $~\mathbb{R}^{n}$ such that $d(E_1,E_2)>0$ and set $E:=E_1\times E_2$. 
Denote $\tau_{\epsilon,P}:\phi \in X \mapsto \phi(. +\frac{P}{\epsilon}) $. One has
\begin{itemize}
    \item There exists $\epsilon_0 >0$, There exists $C>0$ such that $\forall 0<\epsilon <\epsilon_0,\forall P\in E, ||\Pi_{\epsilon,P}||_X\leq C$. 
    
    \item There exists $\epsilon_0$ such that for all $\epsilon <\epsilon_0$, $P \mapsto \Pi_{\epsilon,P}$ is continuous.
    \item Consider sequences $(\epsilon_m)\in (\mathbb{R}_+^*)^\mathbb{N}$ and $P_m \in E^\mathbb{N}$ such that $\epsilon_m \rightarrow 0$ and $P_m \rightarrow (A_1,A_2)$. Then, $\tau_{\epsilon_m,P_{m,k}}\Pi_{\epsilon_m,P_m}\tau_{\epsilon_m,P_{m,k}}^{-1} \eta \rightarrow \Pi_{\infty,A_k}\eta$ in $X$ for all  $\eta \in C_c^\infty(\mathbb{R}^n)$ where $\Pi_{\infty,A_k}$ is the orthogonal projection on $ K_{\infty,A_k}^{\perp}$ where  $K_{\infty,A_k}=Span\{\frac{\partial U_{A_k}}{\partial x_j},j\in\{1,...,n\}\}$.
    \end{itemize}

\end{lemma}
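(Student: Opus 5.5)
The plan is to write $\Pi_{\epsilon,P}$ explicitly through the Gram matrix of the spanning family of $K_{\epsilon,P}$. Set $Z_{k,j}=\frac{\partial U_{\epsilon,k}}{\partial x_j}$. Then
\[
\Pi_{\epsilon,P}\phi=\phi-\sum_{k,j}c_{k,j}(\phi)\,Z_{k,j},
\]
where the vector $c(\phi)$ solves $G_{\epsilon,P}\,c=\bigl(\langle \phi,Z_{k,j}\rangle\bigr)_{k,j}$ and $G_{\epsilon,P}=\bigl(\langle Z_{k,j},Z_{k',j'}\rangle\bigr)$. Each $Z_{k,j}$ lies in $X$. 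It is in $D^{1,2}$ because $|\nabla^2U|=O(|x|^{-n})$. It is in $L^s$ because $|\nabla U|=O(|x|^{1-n})$ and $(n-1)s>n$. Moreover $\|Z_{k,j}\|_X\le C$ uniformly for $P\in E$ by the scaling remark, since $Q$ is bounded above and below on the compact set $E$.

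The first key step is to analyse the Gram matrix. By radial symmetry the diagonal blocks are
\[
\langle Z_{k,j},Z_{k,j'}\rangle=\delta_{jj'}\,\frac1n\|\nabla\partial_rU_{P_k}\|_2^2 ,
\]
which is a positive constant bounded below uniformly on $E$. For the off-diagonal blocks $k\ne k'$, the two centers satisfy $|P_1-P_2|/\epsilon\ge d(E_1,E_2)/\epsilon\to\infty$. Using $|\nabla^2U_P(x)|\le C(1+|x|)^{-n}$ and splitting $\mathbb{R}^n$ into the two half-spaces nearest to each center, the cross integral is bounded by $C(\epsilon/d)^{n}\log(1/\epsilon)$, hence it is $o(1)$. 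So $G_{\epsilon,P}=D_P+o(1)$ with $D_P$ diagonal and invertible, uniformly in $P$. For $\epsilon<\epsilon_0$ this gives $\|G_{\epsilon,P}^{-1}\|\le C$. Then $|c(\phi)|\le C\|\phi\|_{D^{1,2}}$, and therefore $\|\Pi_{\epsilon,P}\phi\|_X\le \|\phi\|_X+C\sum\|Z_{k,j}\|_X\|\phi\|_X$. This proves the first point.

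For the second point, fix $\epsilon$. The maps $P\mapsto Z_{k,j}\in X$ are continuous. This follows from continuity of translations in $D^{1,2}$ and $L^s$, together with continuity of $Q(P)\mapsto$ dilation, by the same dominated-convergence argument as in Lemma~\ref{ContinuityP}. Hence $G_{\epsilon,P}$ is continuous in $P$, and so is its inverse, since it is invertible for $\epsilon<\epsilon_0$. The explicit formula then gives operator-norm continuity of $P\mapsto\Pi_{\epsilon,P}$.

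For the third point, conjugating by $\tau_{\epsilon_m,P_{m,k}}$ is an isometry of both $D^{1,2}$ and $L^s$. It turns the family $Z_{k,j}$ into $\partial_jU_{P_{m,k}}$, which converge in $X$ to $\partial_jU_{A_k}$. The other family $Z_{k',j}$ becomes $\partial_jU_{P_{m,k'}}(\cdot-(P_{m,k'}-P_{m,k})/\epsilon_m)$, whose center escapes to infinity. For fixed $\eta\in C_c^\infty$ we then have $\langle\eta,\text{(escaping bump)}\rangle=\int\nabla\eta\cdot\nabla(\cdot)\to0$ by pointwise decay on the support of $\eta$. The Gram matrix converges to $\operatorname{diag}(D_{A_k},D_{A_{k'}})$, so the coefficients attached to the escaping family tend to $0$. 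Their contribution in $X$ then vanishes, since those functions are bounded in $X$. The remaining coefficients converge to those of $\Pi_{\infty,A_k}\eta$.

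The main obstacle is the uniform off-diagonal estimate in the first step, together with making every bound uniform in $P\in E$. With only algebraic decay one must check integrability carefully. The exponent $n$ for $\nabla^2U$ makes $\int|\nabla^2U_{P_1}||\nabla^2U_{P_2}|$ small but only logarithmically borderline. I would handle it by bounding each factor by $C(\epsilon/d)^n$ on the far half-space and using the fact that the near factor is in $L^1$ away from a logarithmic loss. If the decay of $\nabla^2U$ turns out to be insufficient, I would use the fallback of integrating by parts and writing $\langle Z_{1,j},Z_{2,j'}\rangle=\int(-\Delta Z_{1,j})Z_{2,j'}$ with $-\Delta Z_{1,j}=(f'(U)-Q g'(U))Z_{1,j}$. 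This expression decays like $|x|^{-(n-2)(q-1)-(n-1)}$, which gives a clean $o(1)$ bound.
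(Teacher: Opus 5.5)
Your proposal is correct and follows the same route as the paper: the explicit Gram-matrix formula for $\Pi_{\epsilon,P}$, uniform invertibility of $G_{\epsilon,P}$ by comparison with its (block-)diagonal part while the cross terms vanish as $|P_1-P_2|/\epsilon\to\infty$, and conjugation by the translations $\tau_{\epsilon_m,P_{m,k}}$ for the third point, where you also supply the decay estimates that the paper only asserts. One harmless slip: by symmetry the diagonal entry is $\frac1n\|D^2U_{P_k}\|_2^2$ (equivalently $\frac1n\sum_{j}\|\nabla\partial_jU_{P_k}\|_2^2$), not $\frac1n\|\nabla\partial_rU_{P_k}\|_2^2$, but the argument only uses that it is positive and bounded below uniformly on $E$.
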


\begin{proof}\label{projection}
Consider $P \in E_1 \times E_2$ and $\epsilon >0$. Recall that $K_{\epsilon,P}=Span\{\frac{\partial U_{\epsilon,k}}{\partial x_j},k\in \{1,2\},j\in\{1,...,n\}\}$. Firstly, the family $(\frac{\partial U_{\epsilon,k}}{\partial x_j})$ is free. Indeed, $(\frac{\partial U_{\epsilon,k}}{\partial x_j})_{j\in \{1,...,n\}}  $ is orthogonal for all $k$ because $U$ is radial and we conclude using translations and the fact that $\frac{P_1-P_2}{\epsilon}\rightarrow +\infty$ because $d(E_1,E_2)>0$. 
$\\$ Then if we call $G_{\epsilon,P}$ the Gram matrix of this family, we have for $\phi \in X$
$$\Pi_{\epsilon,P}\phi = \phi -\sum_{k,j} c_{\epsilon,P,k,j}\frac{\partial U_{\epsilon,k}}{\partial x_j}.$$

with $c_{\epsilon,P}=G_{\epsilon,P}^{-1}(\langle \frac{\partial U_{\epsilon,k}}{\partial x_j}|\phi \rangle)_{k,j}$. We conclude easily to obtain first two points ( to control the Gram matrix uniformly in $P$, we use that $G_{\epsilon,P}^{-1}=(D_{\epsilon,P}+G_{\epsilon,P}-D_{\epsilon,P})^{-1}$ with $D_{\epsilon,P}$ the diagonal of $G_{\epsilon,P}$). The last point is a consequence of the fact that $$\tau_{\epsilon_m,P_{m,k_0}}\Pi_{\epsilon_m,P_m}\tau_{\epsilon_m,P_{m,k_0}}^{-1}=\phi -\sum_{k,j} \tilde{c}_{\epsilon,P,k,j}\frac{\partial U_{P_{m,k}}}{\partial x_j}\left(x-\frac{P_{m,k}-P_{m,k_0}}{\epsilon_m}\right)$$
and the obvious Gram matrix. 
\end{proof}
\begin{proposition}\label{Uniformity}

Consider two compact sets $E_1$ and $E_2$  of $~\mathbb{R}^{n}$ such that $d(E_1,E_2)>0$ and set $E:=E_1\times E_2$. Then, there exists $\epsilon_0$ such that  there exists $C>0$ such that

$$\forall P\in E, 0<\epsilon <\epsilon_0,\forall\phi \in K_{\epsilon,P}^\perp, ||\tilde{L}_{\epsilon,P} \phi||_X \geq C||\phi||_X$$

$$\\$$
\end{proposition}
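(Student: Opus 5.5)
We argue by contradiction, following the classical blow-up and compactness scheme. Suppose the statement fails. Then there are sequences $\epsilon_m\to0$, $P_m\in E$, and $\phi_m\in K_{\epsilon_m,P_m}^\perp$ with $\|\phi_m\|_X=1$ and $\|\tilde L_{\epsilon_m,P_m}\phi_m\|_X\to0$. Since $E$ is compact, we may assume $P_m\to(A_1,A_2)\in E$, and in particular $A_1\neq A_2$.

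Write $h_m=\tilde L_{\epsilon_m,P_m}\phi_m$. Since $\phi_m\in K^\perp$, we have $\Pi\phi_m=\phi_m$, and therefore
\[
\phi_m-\Pi_{\epsilon_m,P_m}\sum_k i^*\bigl((f'(U_{\epsilon_m,k})-g'(U_{\epsilon_m,k})Q(P_{m,k}))\phi_m\bigr)=h_m .
\]
Applying $I-\Pi$ gives $\phi_m=h_m+\Pi\sum_k i^*(\cdots)+w_m$ with $w_m\in K_{\epsilon_m,P_m}$. The coefficients of $w_m$ are obtained by testing against $\partial_j U_{\epsilon_m,k}$. These should tend to zero because $\langle \phi_m,\partial_jU_{\epsilon_m,k}\rangle=0$, and because $\partial_jU_{P}$ lies in the kernel of $L_P$ (Lemma \ref{non-degeneracy of the solution}). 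Indeed, $\langle i^*(V_k\phi_m),\partial_jU_{\epsilon,k}\rangle=\int V_k\phi_m\partial_jU_{\epsilon,k}=\langle\phi_m,\partial_jU_{\epsilon,k}\rangle=0$, where $V_k=f'(U_{\epsilon,k})-g'(U_{\epsilon,k})Q(P_k)$. Cross terms with $k'\neq k$ are small, since the centers are at distance of order $1/\epsilon_m$ and the decay $|\nabla U|\lesssim |x|^{1-n}$ makes them $o(1)$. The Gram bounds of Lemma \ref{Projection prop} then give $\|w_m\|_X\to0$.

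Next, for each $k$, translate: $\psi_{m,k}=\tau_{\epsilon_m,P_{m,k}}\phi_m$. These are bounded in $X$, so up to a subsequence $\psi_{m,k}\rightharpoonup\psi_k$ weakly in $X$ and strongly in $L^r_{loc}$. Testing the equation against $\eta\in C_c^\infty$, and using the third point of Lemma \ref{Projection prop} together with Lemma \ref{ContinuityP}, we obtain $L_{A_k}\psi_k=0$. The potential of the other peak, $V_{k'}$ translated, tends to zero locally uniformly, so it drops out in the limit. The orthogonality relations also pass to the limit, since $\partial_jU_{P_{m,k}}\to\partial_jU_{A_k}$ strongly in $D^{1,2}$. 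Hence $\psi_k\in K_{\infty,A_k}^\perp\cap\ker L_{A_k}$, and Lemma \ref{non-degeneracy of the solution} forces $\psi_k=0$.

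The key step is to upgrade this to $\|\phi_m\|_X\to0$, which contradicts $\|\phi_m\|_X=1$. From the identity above, $\phi_m=h_m+w_m+\Pi\sum_k i^*(V_{k,m}\phi_m)$, and $\Pi$ is uniformly bounded. It therefore suffices to show that $\|i^*(V_{k,m}\phi_m)\|_X\to0$, that is,
\[
\|V_{k,m}\phi_m\|_{\frac{2n}{n+2}}+\|V_{k,m}\phi_m\|_{\frac{ns}{n+2s}}\to0 .
\]
After translation by $P_{m,k}/\epsilon_m$, we split $\mathbb{R}^n$ into $B(0,R)$ and its complement. On $B(0,R)$, $V$ is bounded and $\psi_{m,k}\to0$ strongly in $L^{2n/(n+2)}$ and $L^{ns/(n+2s)}$ by Rellich, as in Lemma \ref{Compact}. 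On $B(0,R)^c$, the tail estimates of Lemma \ref{Compact} give a bound $C R^{-\gamma}\|\phi_m\|_X$ with $\gamma>0$, uniformly in $m$; here the uniformity comes from $Q(P)\in[\nu,M]$ on $E$. Letting $m\to\infty$ and then $R\to\infty$ gives the claim.

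The main obstacle I expect is this uniform tail control, together with making the limit equation rigorous through $\Pi$. The algebraic decay makes the tails and the interaction terms only polynomially small, so they must be checked with the exponents in the conditions on $s$. This is where $\alpha(q-1)n/2>n$ and $(q-1)(n-2)>2$ are used.
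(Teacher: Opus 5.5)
Your proposal is correct and follows essentially the paper's proof: contradiction with normalized $\phi_m$, blow-up at each $P_{m,k}/\epsilon_m$ producing a weak limit in $K_{\infty,A_k}^\perp\cap\ker L_{A_k}$ (hence zero by Lemma \ref{non-degeneracy of the solution}), and then $\|i^*(V_{k,m}\phi_m)\|_X\to0$ by the compactness mechanism of Lemma \ref{Compact}. The paper first freezes the potential at $A_k$ via Lemma \ref{ContinuityP} and then applies compactness, whereas you inline that compactness as Rellich on $B(0,R)$ plus tail bounds uniform in $P$, which works equally well.

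Two small corrections. The $w_m$ step is unnecessary: since $\Pi_{\epsilon_m,P_m}\phi_m=\phi_m$, your identity already reads $\phi_m=h_m+\Pi_{\epsilon_m,P_m}\sum_k i^*(V_{k,m}\phi_m)$, so $w_m\equiv0$. Also, testing this identity against $\eta$ first gives only $\Pi_{\infty,A_k}L_{A_k}\psi_k=0$. The passage to $L_{A_k}\psi_k=0$ needs the symmetry $\langle L_{A_k}\psi_k|\partial_jU_{A_k}\rangle=\langle\psi_k|L_{A_k}\partial_jU_{A_k}\rangle=0$, exactly as in the paper; you skip this step, but it is immediate.
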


\begin{proof}

$\\$$\\$

Assume by contradiction the existence of sequences $(\epsilon_m) \in (\mathbb{R}_+^*)^{\mathbb{N}}, (P_m) \in E^{\mathbb{N}}, (\phi_m) \in (K_{\epsilon_m,P_m}^\perp)^{\mathbb{N}}$ such that $\epsilon_m \rightarrow 0$, $||\phi_m||_X=1$ and $||\tilde{L}_{\epsilon_m,P_m}\phi_m||_X \leq \frac{1}{m}$. By compactness, we can suppose that $P_{m,k} \rightarrow A_k$ for $k\in \{1,2\}$. Fix $k_0\in \{1,2\}$. 
$\\$
Let $\tilde{\phi}_{m,k_0} (x)=\phi_m(x+\frac{P_{m,k_0} }{\epsilon_m})$. Thus

$$||\tilde{\phi}_{m,k_0} ||_X= ||\phi_{m}||_X=1.$$

Hence, by reflexivity of $X$, there exists $\psi_{k_0} \in X$ such that $\tilde{\phi}_{m,{k_0}} \rightharpoonup \psi_{k_0} \in X$. We want to show that

$$\psi_{k_0} = i^*((f'(U_{A_{k_0}})-Q(A_{k_0})g'(U_{A_{k_0}}))\psi_{k_0}). $$
Fix $\eta \in C_c^{\infty}(\mathbb{R}^n)$. Denote $\tau$ the isometry 
\begin{equation*}
\begin{cases}

X \rightarrow X \\
 \phi \mapsto \phi(.+\frac{P_{m,{k_0}}}{\epsilon_m})
\end{cases}.
\end{equation*}
We claim that
because $\tau$ is an isometry
$$\langle\tau\tilde{L}_{\epsilon_m,P_m}\phi_m,\eta\rangle=o(1).$$
Thus, because $ \Pi_{\epsilon_m,P_m}\phi_{m}=\phi_m$,
$$\left\langle\left(\tilde{\phi}_{m,{k_0}} -\tau\Pi_{\epsilon_m,P_m}\sum i^*((f'(\ U_{\epsilon,P_{m,k}})-Q(P_{m,k})g'( U_{\epsilon,P_{m,k}}){\phi}_{m})\right)|\eta\right\rangle  =o(1).$$
But, $\langle \tilde{\phi}_{m,{k_0}}|\eta\rangle\rightarrow \langle\psi_{k_0}|\eta\rangle$ by weak convergence. In addition,

$$\left\langle\tau\Pi_{\epsilon_m,P_m}\sum i^*((f'(\ U_{\epsilon,P_{m,k}})-Q(P_{m,k})g'( U_{\epsilon,P_{m,k}}){\phi}_{m})|\eta\right\rangle$$$$=\left\langle\sum i^*((f'(\ U_{\epsilon,P_{m,k}})-Q(P_{m,k})g'( U_{\epsilon,P_{m,k}}){\phi}_{m})|\Pi_{\epsilon_m,P_m}\tau^{-1}\eta\right\rangle$$
$$=\left\langle\sum (f'(\ U_{\epsilon,P_{m,k}})-Q(P_{m,k})g'( U_{\epsilon,P_{m,k}}){\phi}_{m})|\Pi_{\epsilon_m,P_m}\tau^{-1}\eta\right\rangle_2$$
( by definition of $i^*$)
$$=\left\langle \sum \left(f'( U_{P_{m,k}})\left(x-\frac{P_{m,k}-P_{m,{k_0}}}{\epsilon_m}\right)-Q(P_{m,k})g'( U_{P_{m,k}})\left(x-\frac{P_{m,k}-P_{m,{k_0}}}{\epsilon_m}\right)\right)\tilde{\phi}_{m,{k_0}}|\tau\Pi_{\epsilon_m,P_m}\tau^{-1}\eta\ \right\rangle_2$$

$\\$

Furthermore,
$$||f'( U_{P_{m,{k_0}}})-Q(P_{m,{k_0}})g'( U_{P_{m,{k_0}}})\tilde{\phi}_{m,{k_0}}
-(f'(U_{A_{k_0}})-Q(A_{k_0})g'(U_{A_{k_0}}))\psi_{k_0}||_\frac{2n}{n+2} \rightarrow 0.$$

Indeed,
$$||(f'( U_{P_{m,{k_0}}})-Q( P_{m,{k_0}})g'( U_{P_{m,{k_0}}}))\tilde{\phi}_{m,{k_0}}
-(f'(U_{A_{k_0}})-Q(A_{k_0})g'(U_{A_{k_0}}))\psi_{k_0}||_\frac{2n}{n+2} $$
$$
\leq ||(f'(\ U_{P_{m,{k_0}}})-f'(U_{A_{k_0}})-[Q(P_{m,{k_0}})\ g'( U_{P_{m,{k_0}}})-Q(A_{k_0})g'(U_{A_{k_0}})])\tilde{\phi}_{m,{k_0}}
||_\frac{2n}{n+2} $$ 

 $$+||(f'(U_{A_{k_0}})-Q(A_{k_0})g'(U_{A_{k_0}}))(\tilde{\phi}_{m,{k_0}}-\psi_{k_0})||_{\frac{2n}{n+2}}.$$

But $h \in X \mapsto (f'(U_{A_{k_0}})-Q(A_{k_0})g'(U_{A_{k_0}}))h) \in L^{\frac{2n}{n+2}}$ is a compact operator. Thus, $$||(f'(U_{A_{k_0}})-Q(A_{k_0})g'(U_{A_{k_0}}))(\tilde{\phi}_{m,{k_0}}-\psi_{k_0})||_{\frac{2n}{n+2}} \rightarrow 0.$$

Now, let $D_m =f'(\ U_{P_{m,{k_0}}})-f'(U_{A_{k_0}})-[Q(P_{m,{k_0}})\ g'( U_{P_{m,{k_0}}})-Q(A_{k_0})g'(U_{A_{k_0}})])$.
 One has
$$||D_m \tilde{\phi}_{m,{k_0}}||_{ \frac{2n}{n+2}} \rightarrow 0$$
 Indeed,
by the inequality of Hölder with $\frac{1}{\frac{2n}{n+2}}=\frac{1}{2}+\frac{1}{n}$,

$$||D_m\tilde{\phi}_{m,{k_0}}||_{\frac{2n}{n+2}} \leq C||\tilde{\phi}_{m,{k_0}}||_X ||D_{m}||_n $$
But  $||\tilde{\phi}_{m,{k_0}}||=1$. Also, recall that $U_P(x)=Q(P)^aU(Q(P)^bx)$,$Q(P) \in [\nu,M]$ and $U$ is decreasing. Thus $|D_m|\leq C[|U(\nu^b.)^{p-1}|+|U(\nu^b.)^{q-1}| $ and $$|||U(\nu^b.)^{p-1}|+|U(\nu^b.)^{q-1}|~||_n\leq C'||U^{q-1}||_n<+\infty.$$ In addition, by continuity of $Q$ and $U$, $D_m \rightarrow 0$ almost everywhere. We conclude by the dominated convergence theorem that $$||D_{m}||_n\rightarrow 0.$$ Then, 
$$||D_m \tilde{\phi}_{m,{k_0}}||_{ \frac{2n}{n+2}} \rightarrow 0.$$

$\\$

In addition thanks to the lemma \ref{Projection prop}, $$||\tau\Pi_{\epsilon_m,P_m}\tau^{-1}\eta -\Pi_{\infty,A_{k_0}}\eta||_{2^*}\rightarrow0.$$

Thus, by the inequality of Holder, 
$$\left\langle \tau \Pi_{\epsilon_m,P_m}i^*((f'(\ U_{\epsilon,P_{m,{k_0}}})-Q(P_{m,{k_0}})g'( U_{\epsilon,P_{m,{k_0}}}){\phi}_{m}))|\eta\right\rangle$$$$\rightarrow \langle\Pi_{\infty,A_{k_0}} i^*((f'(U_{A_{k_0}})-Q(A_{k_0})g'(U_{A_{k_0}}))\psi_{k_0})|\eta \rangle $$
Finally, when $k\ne k_0$,
$$\left|\left\langle  \left(f'( U_{P_{m,k}})\left(x-\frac{P_{m,k}-P_{m,{k_0}}}{\epsilon_m}\right)-Q(P_{m,k})g'( U_{P_{m,k}})\left(x-\frac{P_{m,k}-P_{m,{k_0}}}{\epsilon_m}\right)\right)\tilde{\phi}_{m,{k_0}}|\tau\Pi_{\epsilon_m,P_m}\tau^{-1}\eta\right\rangle_2\right|$$
$$=\left|\left\langle  \left(f'( U_{P_{m,{k_0}}})\left(x-\frac{P_{m,k}-P_{m,{k_0}}}{\epsilon_m}\right)-Q(P_{m,{k}})g'( U_{P_{m,k}})\left(x-\frac{P_{m,k}-P_{m,{k_0}}}{\epsilon_m}\right)\right)\tilde{\phi}_{m,{k_0}}|\Pi_{\infty,A_{k_0}}\eta\right\rangle_2\right|+o(1)$$
$$\leq ||\tilde{\phi}_{m,{k_0}}||_2\left|\left| \left(f'( U_{P_{m,k}})\left(x-\frac{P_{m,k}-P_{m,{k_0}}}{\epsilon_m}\right)-Q(P_{m,k})g'( U_{P_{m,k}})\left(x-\frac{P_{m,k}-P_{m,{k_0}}}{\epsilon_m}\right)\right)\tilde{\phi}_{m,{k_0}})\Pi_{\infty,A_{k_0}}\eta\right|\right|_2 $$$$+o(1) $$
But, $||\Pi_{\infty,A_{k_0}}\eta||_2<+\infty $. Then, by the dominated convergence Theorem,
$$\left|\left| \left(f'( U_{P_{m,{k}}})\left(x-\frac{P_{m,k}-P_{m,{k_0}}}{\epsilon_m}\right)-Q(P_{m,k})g'( U_{P_{m,k}})\left(x-\frac{P_{m,k}-P_{m,{k_0}}}{\epsilon_m}\right)\right)\Pi_{\infty,A_{k_0}}\eta\right|\right|_2 \rightarrow 0.$$

$$\left|\left\langle  \left(f'( U_{P_{m,k}})\left(x-\frac{P_{m,k}-P_{m,{k_0}}}{\epsilon_m}\right)-Q(P_{m,{k_0}})g'( U_{P_{m,k}})\left(x-\frac{P_{m,{k}}-P_{m,{k_0}}}{\epsilon_m}\right)\right)\tilde{\phi}_{m,{k_0}}|\tau\Pi_{\epsilon_m,P_m}\tau^{-1}\eta\right\rangle_2\right|\rightarrow0$$

Conclusion, 
$$\left\langle\left(\tilde{\phi}_{m,{k_0}} -\tau\Pi_{\epsilon_m,P_m}\sum i^*((f'(\ U_{\epsilon,P_{m,k}})-Q(P_{m,k})g'( U_{\epsilon,P_{m,k}}){\phi}_{m})\right)|\eta\right\rangle \rightarrow $$$$\langle \psi_{k_0} -\Pi_{\infty,A_{k_0}} i^*((f'(U_{A_{k_0}})-Q(A_{k_0})g'(U_{A_{k_0}}))\psi_{k_0})|\eta\rangle$$
In conclusion, by uniqueness of the limit and because it is true for all $\eta$,
$$\psi_{k_0} = \Pi_{\infty,A_{k_0}}i^*((f'(U_{A_{k_0}})-Q(A_{k_0})g'(U_{A_{k_0}}))\psi_{k_0}) $$
Furthermore, since $\phi_m \in K_{\epsilon_m,P_m}^\perp$
$$\left\langle \phi_{m}\big| \frac{\partial U_{\epsilon_m,P_{m,k_0}}}{\partial x_j}\right\rangle=0$$
$$\left\langle \tilde{\phi}_{m,{k_0}}\big| \frac{\partial U_{P_{m,k_0}}}{\partial x_j}\right\rangle=0$$
But 

$$\left|\left|\frac{\partial U_{P_{m,{k_0}}}}{\partial x_j}-\frac{\partial U_{A_{k_0}}}{\partial x_j}\right|\right|_{D^{1,2}(\mathbb{R}^n)}\rightarrow 0$$ 
and 
$$\left\langle \tilde{\phi}_{m,{k_0}}| \frac{\partial U_{A_{k_0}}}{\partial x_j}\right\rangle \rightarrow \left\langle \psi_{k_0}|\frac{\partial U_{A_{k_0}}}{\partial x_j}\right  \rangle$$
Thus, taking the limit

$$\psi_{k_0} \in K_{\infty,A_{k_0}}^\perp$$

where $K_{\infty,A_{k_0}}^\perp=\{v \in X, \langle v|\frac{\partial U_{A_{k_0}}}{\partial x_j}\rangle=0\}$. Set, $L_{\infty,A_{k_0}}(\phi)=\phi - i^*((f'(U_{A_{k_0}})-Q(A_{k_0})g'(U_{A_{k_0}}))\phi)$.
Then,

$$\Pi_{\infty,A_{k_0}}L_{\infty,A_{k_0}}\psi_{k_0} =0$$
But, thanks to lemma \ref{non-degeneracy of the solution}, $$\left\langle L_{\infty,A_{k_0}}\psi_{k_0}|\frac{\partial U_{A_{k_0}}}{\partial x_j}\right\rangle = \left\langle \psi_{k_0}|L_{\infty,A_{k_0}}\frac{\partial U_{A_{k_0}}}{\partial x_j}\right\rangle=0 $$
Then, $$L_{\infty,A_{k_0}}\psi_{k_0} =0$$
thanks to the lemma \ref{non-degeneracy of the solution}, $\psi_{k_0}=0$.

Now,
$$ \frac{1}{m} \geq ||\tilde{L}_{\epsilon_m,P_m} \phi_m||_X \geq ||\phi_m||- ||\Pi_{\epsilon_m,P_m}\sum i^*((f'(\ U_{\epsilon_m,P_{m,k}})-Q(P_{m,k})g'( U_{\epsilon_m,P_{m,k}}){\phi}_{m})||_X $$ $$= 1 - ||\Pi_{\epsilon_m,P_m}\sum i^*((f'(\ U_{\epsilon_m,P_{m,k}})-Q(P_{m,k})g'( U_{\epsilon,P_{m,k}}))\phi_{m})||_X.$$
 Thanks to lemma \ref{Projection prop}, it suffices to show that $||\sum i^*((f'(\ U_{\epsilon,P_{m,k}})-Q(P_{m,k})g'( U_{\epsilon,P_{m,k}}))\phi_{m})||_X \rightarrow 0$ to conclude to a contradiction.

But with $\beta=\frac{ns}{n+2s}$ or $\beta=\frac{2n}{n+2}$ and $k_0 \in \{1,2\}$
$$ ||(f'(\ U_{\epsilon_m,P_{m,{k_0}}})-Q(P_{m,{k_0}})g'( U_{\epsilon_m,P_{m,{k_0}}}))\phi_{m})||_\beta$$$$=||(f'(\ U_{P_{m,{k_0}}})-Q(P_{m,{k_0}})g'( U_{P_{m,{k_0}}}))\tilde{\phi}_{m,{k_0}})||_\beta=
||(f'( U_{A_{k_0}})-Q(A_{k_0})g'( U_{A_{k_0}}))\tilde{\phi}_{m,{k_0}}||_\beta+o(1)$$

because, $P \rightarrow f'( U_{P})-Q(P)g'( U_{P})$ is continuous according to the lemma \ref{ContinuityP} and $$||\tilde{\phi}_{m,{k_0}}||_X=1.$$
But 
$h \mapsto (f'( U_{A_{k_0}})-Q(A_{k_0})g'( U_{A_{k_0}}))h $ is compact. Thus,
$$ ||(f'(\ U_{\epsilon_m,P_{m,{k_0}}})-Q(P_{m,{k_0}})g'( U_{\epsilon_m,P_{m,{k_0}}}))\phi_{m}) ||_\beta \rightarrow 0.$$

$\\$

\end{proof}

\begin{corollary}\label{inversibility}
$\\$
Consider a compact set $E_1\times E_2$ of $\mathbb{R}^{2n}$ such that $d(E_1,E_2)>0$,
$\tilde{L}_{\epsilon,P}$ is invertible for all $P \in E$ and its inverse is continuous. In addition, there exists $C$ a constant and $\epsilon_0>0$ such that $||\tilde{L}^{-1}_{\epsilon,P}||\leq C$ for all $\epsilon <\epsilon_0$ and $P \in E$. 

\end{corollary}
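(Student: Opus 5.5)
The plan is to combine the uniform lower bound of Proposition~\ref{Uniformity} with a Fredholm alternative argument on the closed subspace $K_{\epsilon,P}^\perp$ of $X$. First we check that $K_{\epsilon,P}^\perp$ is a closed subspace of $X$, hence a Banach space. It is the intersection of kernels of the linear forms $v\mapsto\langle v|\partial_{x_j}U_{\epsilon,k}\rangle$, and these are continuous on $X$ since $\|\cdot\|_{D^{1,2}}\le\|\cdot\|_X$. Next we check that $\tilde L_{\epsilon,P}$ maps $K_{\epsilon,P}^\perp$ into itself. This is immediate, because $\Pi_{\epsilon,P}$ takes values in $K_{\epsilon,P}^\perp$. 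Boundedness follows from Lemma~\ref{continuity 1} together with the first point of Lemma~\ref{Projection prop}, which gives $\|\Pi_{\epsilon,P}\|_X\le C$ for $\epsilon<\epsilon_0$.

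The key structural step is to write, for $\phi\in K_{\epsilon,P}^\perp$,
\[
\tilde L_{\epsilon,P}\phi=\Pi_{\epsilon,P}\phi-\Pi_{\epsilon,P}(Id-L_{\epsilon,P})\phi=\phi-\Pi_{\epsilon,P}(Id-L_{\epsilon,P})\phi .
\]
By Lemma~\ref{Compact}, $Id-L_{\epsilon,P}$ is compact on $X$, and $\Pi_{\epsilon,P}$ is bounded, so the composition restricted to $K_{\epsilon,P}^\perp$ is compact. Lemma~\ref{Compact} is stated for fixed $P_k$ through $U_{P_k}$; the translated profiles $U_{\epsilon,k}$ are handled identically, since translation is an isometry of $X$. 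Hence $\tilde L_{\epsilon,P}$ is of the form identity minus compact on the Banach space $K_{\epsilon,P}^\perp$. By the Riesz--Fredholm theory it is Fredholm of index zero, so it is injective if and only if it is surjective.

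Now fix $\epsilon_0$ and $C$ as in Proposition~\ref{Uniformity}. For $\epsilon<\epsilon_0$ and $P\in E$, the bound $\|\tilde L_{\epsilon,P}\phi\|_X\ge C\|\phi\|_X$ gives injectivity, hence surjectivity by the Fredholm alternative, so $\tilde L_{\epsilon,P}$ is bijective. Applying the same inequality to $\phi=\tilde L_{\epsilon,P}^{-1}\psi$ gives $\|\tilde L^{-1}_{\epsilon,P}\psi\|_X\le C^{-1}\|\psi\|_X$. This shows the inverse is continuous, with norm at most $C^{-1}$ uniformly in $P\in E$ and $\epsilon<\epsilon_0$; this is the constant claimed in the corollary, renamed.

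I expect no serious obstacle here, since the hard work is already done in Proposition~\ref{Uniformity}. The only point needing care is the compactness of the perturbation on the subspace with the correct norm. We need that the restriction of a compact operator to a closed subspace, composed with a bounded projection, remains compact, and that the Fredholm alternative applies in the Banach (not Hilbert) setting of $X$. Both are standard, since Riesz theory holds in any Banach space. Invertibility for every $P\in E$ is understood for $\epsilon<\epsilon_0$, which is how the statement is read.
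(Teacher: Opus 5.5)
Your proposal is correct and follows essentially the same route as the paper: you use the identity-minus-compact structure of $\tilde L_{\epsilon,P}$ on the closed subspace $K_{\epsilon,P}^\perp$, get injectivity from Proposition~\ref{Uniformity}, and get surjectivity from the Fredholm alternative. The only small difference is that you read off continuity of the inverse and the uniform bound $\|\tilde L_{\epsilon,P}^{-1}\|\le C^{-1}$ directly from the lower bound, whereas the paper invokes the open mapping theorem for continuity and then cites Proposition~\ref{Uniformity} separately for uniformity; your version is slightly more economical and states the bound correctly.
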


\begin{proof}

Thanks to the previous results
\begin{itemize}
    \item By the lemma \ref{Compact}, $L_{\epsilon,P}$ is of the form $id - K$ with $K$ a compact operator. Thus $\tilde{L}_{\epsilon,P}$ is also of this form.
    \item By the proposition \ref{Uniformity} it is clear that $\tilde{L}_{\epsilon,P}$ is one-to-one.
\end{itemize}
In conclusion, the Fredholm alternative proves that $\tilde{L}_{\epsilon,P}$ is a bijective, continous operator from $K_{\epsilon,P}^\perp$ to $K_{\epsilon,P}^\perp$. Then, thanks to the open mapping theorem because  $K_{\epsilon,P}^\perp$ is a Banach space ( since closed in $X$), $\tilde{L}_{\epsilon,P}^{-1}$ is continuous.
\smallskip

In addition, by the proposition \ref{Uniformity},
There exists $\epsilon_0$ such that for all compact set $E$ of $~\mathbb{R}^{2n}$ of the form $E_1\times E_2$ with $d(E_1,E_2)>0$, there exists $C>0$ such that

$$\forall P\in E, 0<\epsilon <\epsilon_0,\forall\phi \in K_{\epsilon,P}^\perp, ||\tilde{L}^{-1}_{\epsilon,P} \phi||_X\leq C.$$
\end{proof} 

 \noindent  One can rewrite the equation \eqref{Main-single}
$$L_{\epsilon,P}\phi = N_f(\phi) + N_g(\phi) + I_f + I_g +  I_{f'} + I_{g'}  + \sum i^*(g'(U_{\epsilon,k})\phi(Q(P_k)-Q_{\epsilon}))+  \sum i^*((Q(P_k)-Q_{\epsilon}(x))g(U_{\epsilon,k}))$$
thus $$\tilde{L}_{\epsilon,P}\phi = \Pi_{\epsilon,P}(N_f(\phi) + N_g(\phi) + I_f + I_g +    I_{f'} + I_{g'}  $$$$+ \sum i^*(g'(U_{\epsilon,k})\phi(Q(P_k)-Q_{\epsilon}))+  \sum i^*((Q(P_k)-Q_{\epsilon}(x))g(U_{\epsilon,k})))$$
i.e. 
$$\phi = \tilde{L}_{\epsilon,P}^{-1}\Pi_{\epsilon,P}(N_f(\phi) + N_g(\phi) + I_f + I_g   +  I_{f'} + I_{g'}  + \sum i^*(g'(U_{\epsilon,k})\phi(Q(P_k)-Q_{\epsilon}))+ $$$$ \sum i^*((Q(P_k)-Q_{\epsilon}(x))g(U_{\epsilon,k})) $$
i.e. $$\phi = T_{\epsilon,P}\phi $$

\medskip

$\\$

Thus the last equation takes the form 
$\phi = T_{\epsilon,P}(\phi)$ for some $\phi\in K_{\epsilon,P}^\perp$. The definition of $T_{\epsilon,P}$ show it is enough to find a fixed point in $X$.  
To apply the Banach fixed point theorem, one needs some controls on the terms which play a role in this equation. That is the goal of the next section. 
\subsubsection{Control of terms}
The first result that we prove is the following one:
\begin{proposition}\label{Stability2}

 For every compact set $E$ of the form $E_1\times E_2$ with $d(E_1,E_2)>0$, there exists $\epsilon_0$ and a constant $N$ such that for all $\epsilon < \epsilon_0$ and for all $P \in E$, $T_{\epsilon,P}$ maps the ball $B(0,N\epsilon)$ into itself.
$$\\$$
\end{proposition}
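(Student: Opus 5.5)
Since $\|T_{\epsilon,P}\phi\|_X\le \|\tilde L_{\epsilon,P}^{-1}\|\,\|\Pi_{\epsilon,P}\|\,\|\cdots\|_X$, Corollary~\ref{inversibility} and Lemma~\ref{Projection prop} reduce the statement to a bound on the bracket in the definition of $T_{\epsilon,P}$. For $\epsilon<\epsilon_0$, $P\in E$ and $\|\phi\|_X\le N\epsilon$, we want it to be at most $C_0\epsilon+o(1)\|\phi\|_X+C\|\phi\|_X^{\min(q,2)}$. Here $C_0$ must be independent of $N$. We then choose $N=2C_0C_{inv}C_{proj}$ and shrink $\epsilon_0$ so that the remaining terms are at most $\tfrac{N}{2}\epsilon$. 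Each term $i^*(h)$ is measured in $X$ through $\|h\|_{\frac{2n}{n+2}}+\|h\|_{\beta}$ with $\beta=\frac{ns}{n+2s}$, using the continuity of $i^*$ and Lemma~\ref{control L^s}.

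\emph{Source terms.} For $\sum_k i^*((Q(P_k)-Q_\epsilon)g(U_{\epsilon,k}))$, translate to the peak at $P_k$. The function becomes $(Q(P_k)-Q(P_k+\epsilon y))U_{P_k}^q(y)$, and since $\nabla Q$ is bounded, $|Q(P_k)-Q(P_k+\epsilon y)|\le C\epsilon|y|$. It therefore suffices that $|y|U^q\in L^{\frac{2n}{n+2}}\cap L^\beta$. Because $U\le C(1+|y|)^{-\alpha}$, this requires $\gamma(\alpha q-1)>n$ for $\gamma\in\{\frac{2n}{n+2},\beta\}$. For $\gamma=\beta$ this is exactly the condition $s>\frac{n}{(n-2)q-3}$ singled out in the Remark, and for $\frac{2n}{n+2}\ge\beta$ it follows. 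This gives $C_0\epsilon$.

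For the interaction terms $I_f$ and $I_g$, split $\mathbb{R}^n$ into the half-spaces $H_1$ and $H_2$ closer to $P_1/\epsilon$ and $P_2/\epsilon$, whose separation is $d/\epsilon$ with $d=d(E_1,E_2)>0$. On $H_1$, the mean value theorem gives $|f(U_1+U_2)-f(U_1)-f(U_2)|\le C(U_1^{p-1}U_2+U_2^p)$, and similarly for $g$ with exponent $q$. On $H_1$ we have $U_2\le C\epsilon^{\alpha}$, and $\int_{H_1} U_2^{q\gamma}$ is $O(\epsilon^{\alpha q\gamma-n})$. These give bounds of order $\epsilon^{\alpha}\|U^{q-1}\|_\gamma+\epsilon^{\alpha q-n/\gamma}$. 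Both exponents exceed $1$ because $\alpha=n-2\ge7$ and $\gamma(\alpha q-1)>n$. The bound $\|U^{q-1}\|_\gamma<\infty$ holds because $(q-1)\alpha\gamma>n$ in our range, with $\gamma$ replaced by $n/2$ through Hölder where needed.

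\emph{Linear terms in $\phi$.} The terms $I_{f'}$ and $I_{g'}$ are bounded by Hölder, exactly as in Lemma~\ref{continuity 1}, by $\|(g'(S)-\sum g'(U_k))\|_{n/2}\|\phi\|_s$ plus an $L^n$ norm against $\|\phi\|_{2^*}$. Again by splitting into $H_1$ and $H_2$ and using $|g'(U_1+U_2)-g'(U_1)|\le CU_2^{q-1}$ on $H_1$ (since $q-1\le1$), these coefficients are $o(1)$ as $\epsilon\to0$. The term $\sum_k i^*(g'(U_{\epsilon,k})(Q(P_k)-Q_\epsilon)\phi)$ is handled the same way: its coefficient is bounded by $C\min(1,\epsilon|y|)U_{P_k}^{q-1}$, whose $L^{n/2}$ and $L^n$ norms tend to $0$ by dominated convergence. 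All these terms are therefore $o(1)\cdot N\epsilon$.

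\emph{Nonlinear terms.} For $N_f$ and $N_g$, use the elementary inequality $|g(a+b)-g(a)-g'(a)b|\le C|b|^q$ (valid for $1<q\le2$), and similarly $C(|a|^{p-2}|b|^2+|b|^p)$ or $C|b|^p$ for $f$. The relevant norms are $\||\phi|^q\|_\gamma$, which we control by interpolation between $L^s$ and $L^{2^*}$. This is where $s\le\frac{n(q-1)}{2}$ and $s<\frac{2n}{n+2}$ enter: we need $q\gamma\in[s,2^*]$ for both values of $\gamma$. The result is $C(N\epsilon)^q=o(\epsilon)$.

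The main obstacle is the source term: one must check carefully that the $\epsilon|y|$ weight against the algebraic tail is integrable in $L^\beta$. This is precisely where $q>q_1$ is used, and I would verify it first. The interaction estimates are the secondary delicate point.
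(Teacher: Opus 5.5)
Your route is the paper's. You use the uniform bound on $\tilde L_{\epsilon,P}^{-1}\Pi_{\epsilon,P}$, then treat the source term through $\epsilon|y|U^q\in L^{\frac{2n}{n+2}}\cap L^\beta$, the nonlinear terms through $|b|^q$ and interpolation, and the linear terms as $o(1)\|\phi\|_X$. Splitting into half-spaces instead of the paper's balls of radius $r=\rho/(2\epsilon)$ makes no difference.

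The gap is in the cross term $U_1^{q-1}U_2$ of $I_g$ (and $U_1^{p-1}U_2$ of $I_f$). You bound it by $C\epsilon^{\alpha}\|U^{q-1}\|_\gamma$ and assert $(q-1)\alpha\gamma>n$. This fails for both values of $\gamma$. Since $q<\frac{n+2}{n-2}$ gives $(q-1)\alpha<4$, and $\gamma\le\frac{2n}{n+2}<2$, you get $(q-1)\alpha\gamma<8<n$, so $U^{q-1}\notin L^\gamma$.

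Your fallback, ``replace $\gamma$ by $n/2$ through H\"older'', does not rescue the $L^\beta$ estimate. It gives $\|U_1^{q-1}\|_{n/2}\|U_2\|_{L^s(H_1)}=O(\epsilon^{\,n-2-n/s})$, and $n-2-n/s>1$ requires $s>\frac{n}{n-3}$, which the constraints on $s$ do not imply. For $n=9$ and $q$ slightly above $q_1\approx1.32$, you have $s\le\frac{n(q-1)}{2}\approx1.44<\frac32$. This yields only about $\epsilon^{0.76}$, which does not show that $T_{\epsilon,P}$ preserves $B(0,N\epsilon)$.

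What is missing is a H\"older split tuned to the integrability threshold of $U^{q-1}$, which is what the paper does. Take $\|U_1^{q-1}U_2\|_{L^\gamma(H_1)}\le\|U^{q-1}\|_{L^{\gamma l}}\,\|U_2\|_{L^{\gamma l^*}(H_1)}$ with $l$ slightly larger than $l_{lim}=\frac{n}{(q-1)\alpha\gamma}$. The first factor is then finite. The second is $O(\epsilon^{\alpha-n/(\gamma l^*)})$, with exponent as close as you like to $\alpha q-\frac{n}{\gamma}$.

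Equivalently, on $H_1$ you have $U_1^{q-1}U_2\le C(1+|z|)^{-\alpha(q-1)}(D+|z|)^{-\alpha}$, where $z=y-P_1/\epsilon$ and $D=|P_1-P_2|/\epsilon$. Since $\alpha(q-1)\gamma<n<\alpha q\gamma$, integrating separately over $|z|<D$ and $|z|>D$ gives $O(\epsilon^{\alpha q-n/\gamma})$. Either way the bound is $o(\epsilon)$ precisely because $\gamma(\alpha q-1)>n$. So $q>q_1$ is needed for the interaction terms as well, not only for the source term.

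A minor slip: for the $L^{\frac{2n}{n+2}}$ part of $I_{f'}$ and $I_{g'}$, an $L^n$ coefficient pairs with $\|\phi\|_2$, not $\|\phi\|_{2^*}$. Use either $L^{n/2}$ against $L^{2^*}$, or $L^n$ against $L^2$ with $\|\phi\|_2\le\|\phi\|_X$ by interpolation.
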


\begin{proof}
    
Firstly thanks to Proposition \ref{Uniformity}, we can find $C>0$ and $\epsilon_{0}$ such that $$\forall \epsilon <\epsilon_0,\|\tilde{L}_{\epsilon,P}^{-1}\Pi_{\epsilon,P}\|\leq C. $$
Thus, it suffices to control 
$$N_f(\phi) + N_g(\phi) + I_f + I_g   +  I_{f'} + I_{g'}  + \sum i^*(g'(U_{\epsilon,k})\phi(Q(P_k)-Q_{\epsilon}))+  \sum i^*((Q(P_k)-Q_{\epsilon}(x))g(U_{\epsilon,k})$$
\underline{Control of 
$\\$
$N_f(\phi) +N_g(\phi)$:}
$\\$$\\$
As in \cite{MichelettiPistoia2003}, using the following true inequalities (true because $n\geq9$ then $q<\frac{n+2}{n-2}<2$),
$$ \left\{
    \begin{array}{ll}
        \forall x_1,x_2\in \mathbb{R}, |g(x_1)-g(x_2)-g'(x_1)(x_1-x_2)|\leq C|x_1-x_2|^q
        \\
        ~~~~~\forall x_1,x_2\in \mathbb{R},|g'(x_1)-g'(x_2)|\leq C|x_1-x_2|^{q-1}
    \end{array}
\right.$$

we have
$$ \left\{
    \begin{array}{ll}
        |g(S_{\epsilon,P} + \phi_1) - g(S_{\epsilon,P} + \phi_2)- g'(S_{\epsilon,P}+\phi_2)(\phi_1-\phi_2)| \leq C|\phi_1-\phi_2|^q
        \\
        ~~~~~ |g'(S_{\epsilon,P} +\phi_1)-g'(S_{\epsilon,P}+\phi_2)| \leq C|\phi_1-\phi_2|^{q-1}
    \end{array}
\right.$$
$$$$

 \noindent  The same inequalities are true for $f$ mutatis mutandis.

$\\$$\\$
We have thus,

$$\|N_g(\phi)\|_X\leq C\|\phi\|_X^q$$
because
\begin{itemize}
    
    \item $\|\phi^q\|_{\frac{2n}{n+2}} \leq \|\phi\|_X^q$ because $s\leq \frac{2n}{n+2}q \leq 2^*$
    \item $\|\phi^q\|_{\frac{ns}{n+2s}} \leq \|\phi\|_X^q$ because $s\leq \frac{ns}{n+2s}q \leq 2^*$ (because $s \leq \frac{n}{2}(q-1)$ and $s\leq 2^*$ thus $\frac{ns}{n+2s}q\leq \frac{ns}{n+2s}\frac{n+2}{n-2} \leq 2^*$).
\end{itemize}
Likewise
$$\|N_f(\phi)\|_X\leq C\|\phi\|_X^p.$$

\noindent Thus, because $q<p$, we have when $\|\phi\|_X\leq 1$,
$$\|N_g(\phi)+N_f(\phi)\|_X\leq C\|\phi\|_X^q.$$

$\\$
\underline{Control of 
$\sum i^*((Q(P_k)-Q_{\epsilon} )g(U_{\epsilon,k}))$:}
$\\$$\\$
We have

$$\|\sum i^*((Q(P_k)-Q_{\epsilon}(x))g(U_{\epsilon,P_k}))\|_X\leq\sum \|i^*((Q(P_k)-Q_{\epsilon}(x))g(U_{\epsilon,P_k}))\|_X.$$

$\\$

 \noindent  Consider $k\in \{1,2\}$.

$\\$$\\$
Let $\beta=\frac{2n}{n+2}$ or $\beta = \frac{ns}{n+2s}$. One has

    $$\|(Q(P_k)-Q_{\epsilon}(x))g(U_{\epsilon,P_k})\|_{\beta}^{\beta}=\|(Q(\epsilon x+ P_k)-Q(P_k))U_{P_k}^q\|_{\beta}^{\beta}$$$$\leq \int_{\mathbb{R}^n}|Q(\epsilon x+P_k)-Q(P_k)|^{\beta}U_{P_k}^{\beta q}dx \leq C\|\nabla Q\|_\infty^\beta\epsilon^{\beta}\int_{\mathbb{R}^n}|x|^{\beta}U^{q\beta}dx.$$
But by hypothesis, $\beta(\alpha q-1)>n$ (because $n>4$ or $\frac{n}{(n-2)q-3}<s$ according to the values of $\beta$).

$\\$
We conclude that
$$\|(Q(\epsilon x+ P_k)-Q(P_k))U_{P_k}^q\|_{\beta} \leq C\epsilon. $$

\noindent Conclusion:

$$\|\sum i^*((Q(P_k)-Q_{\epsilon}(x))g(U_{\epsilon,P_k}))\|_X\leq C\epsilon.$$

$\\$$\\$
\underline{Control of 
$\sum i^*((Q_{\epsilon} - Q(P_k))g'(U_{\epsilon,k})\phi)$:}

$\\$$\\$
Consider $k\in \{1,2\}$. Consider $t$ such that $ \frac{2n}{n+2}<t < 2$. Reasoning as before with Hölder's inequality we obtain 
$$\|U_{P_k}^{q-1}(Q(P_k)-Q(\epsilon x+P_k))\phi\|_{\frac{2n}{n+2}}\leq C\|\phi\|_t \epsilon\|U_{P_k}^{q-1}|x\||_{\frac{t2n}{(n+2)t-2n}}$$ 

because $ |Q({P_k}) - Q(\epsilon x+ {P_k})|\leq \epsilon M|x|$. 
But 
$[\alpha(q-1)-1]t\frac{2n}{t(n+2)-2n} >n$ (because $t < 2$).
Then,
$$\|U_{P_k}^{q-1}(Q(P_k)-Q(\epsilon x+P_k))\phi\|_{\frac{2n}{n+2}}\leq C\|\phi\|_X \epsilon.$$

\noindent In addition,

$$\|U_{P_k}^{q-1}(Q(P_k)-Q(\epsilon x+P_k))\phi\|_{\frac{sn}{n+2s}} \leq \|U_{P_k}^{q-1}sup_{P\in E}|Q(P_k)-Q(\epsilon x+P_k)|~~\|_{\frac{n}{2}}\|\phi\|_X =o(1)\|\phi\|_X$$ by the dominated convergence Theorem because $Q$ bounded and using that $sup|Q(P_k)-Q(\epsilon x+P_k))|\leq  \epsilon M|x|$ to show the pointwise convergence. 
$\\$
Conclusion (uniformly in $P$)

$$\|\sum i^*((Q_{\epsilon} - Q(P_k))g'(U_{\epsilon,k})\phi)\|_X \leq o(1)\|\phi\|_X.$$

$\\$$\\$
\underline{Control of 
$I_f+I_g$:}
$\\$

\noindent For the control, we need the following lemma
$\\$$\\$
\begin{lemma}\label{Control puissance q}
$\\$

For all $a>0$, $b$ real and $e> 0$ we have:

\begin{itemize}
    \item  if $e<1$:
    $$\|a+b|^{e}-a^{e}|\leq C(e)min\{|b|^{e},a^{e-1}|b|\}$$
    \item if $e\geq 1$:
    $$\|a+b|^{e}-a^{e}|\leq C(e)(|b|^{e}+a^{e-1}|b|)$$

\end{itemize}

\end{lemma}
$\\$
$\\$$\\$

By hypothesis, $\rho=d(E_1,E_2)>0$. Set $r=\frac{\rho}{2\epsilon}$
Let $\beta = \frac{2n}{n+2}$ or $\beta  =\frac{ns}{n+2s}$. Then, 
$$\|(U_{\epsilon,1}+U_{\epsilon,2})^q-(U_{\epsilon,1}^q+U_{\epsilon,2}^q)\|^{\beta}_{\beta} $$$$ =
\int_{\mathbb{R}^n}\left|\left[U_{P_1}\left(x-\frac{P_1}{\epsilon}\right)+U_{P_2}\left(x-\frac{P_2}{\epsilon}\right)\right]^q-\left[U^q_{P_1}\left(x-\frac{P_1}{\epsilon}\right)+U_{P_2}^q\left(x-\frac{P_2}{\epsilon}\right)\right]\right|^\beta dx$$
$$=\int_{\mathbb{R}^n}|U_{P_1}(x)+U_{P_2}(x-\frac{P_2-P_1}{\epsilon}))^q-(U^q_{P_1}(x)+U_{P_2}^q(x-\frac{P_2-P_1}{\epsilon})|^\beta dx$$

$$ \leq 
\int_{|x|\leq r} \left|\left[U_{P_1}(x)+U_{P_2}\left(x-\frac{P_2-P_1}{\epsilon}\right)\right]^q-\left[U^q_{P_1}(x)+U_{P_2}^q\left(x-\frac{P_2-P_1}{\epsilon}\right)\right]\right|^\beta dx+$$ $$\int_{|x|\geq r} \left|\left[U_{P_1}(x)+U_{P_2}\left(x-\frac{P_2-P_1}{\epsilon}\right)\right]^q-\left[U^q_{P_1}(x)+U_{P_2}^q\left(x-\frac{P_2-P_1}{\epsilon}\right)\right]\right|^\beta dx$$
$$=: Res_1 + Res_2$$
But

$$|Res_2| \leq C\left(\int_{|x |\geq r}\left|\left[U_{P_1}(x)+U_{P_2}\left(x-\frac{P_2-P_1}{\epsilon}\right)\right]^q-U_{P_2}\left(x-\frac{P_2-P_1}{\epsilon}\right)\right|^\beta  + \int_{|x|\geq r}|U_{P_1}^q(x)|^\beta \right).$$

Thus, by Lemma \ref{Control puissance q} (case $e \geq 1$)
$$|Res_2| \leq C\left(\int_{|x|\geq r}\left|U_{P_2}^{q-1}\left(x-\frac{P_2-P_1}{\epsilon}\right)U_{P_1}(x)\right|^\beta + \int_{|x|\geq r}U_{P_1}^{\beta q}(x)\right).$$

But by hypothesis $\beta(q\alpha-1)>n$.

Thus,
$$\left(\int_{|x|\geq r}U_{P_1}^{\beta q}(x)\right)^{\frac{1}{\beta}} =o(\epsilon).$$

Furthermore,
take $l_{lim}=\frac{n}{(q-1)\alpha\beta}$.
We have $l_{lim}^*=\frac{n}{n-(q-1)\alpha \beta}$ such that $1= \frac{1}{l^*_{lim}}+\frac{1}{l_{lim}}$ with $n-(q-1)\alpha \beta>0$ and we have

$$l_{lim}^*\beta \alpha - n>\beta l_{lim}^*$$
Indeed this inequality is equivalent to

$$\beta(\alpha q-1) > n.$$
Now we need to apply Hölder's inequality with $l_{lim}$ to control 
$$\int_{|x|\geq r}\left|U_{P_2}^{q-1}\left(x-\frac{P_2-P_1}{\epsilon}\right)U_{P_1}(x)\right|^\beta.$$

 \noindent  Nonetheless, $$\|U^{(q-1)\beta}\|_{l_{lim}}=+ \infty$$
because
$$l_{lim} \times (q-1)\beta \alpha =n.$$

 \noindent  Nevertheless by continuity, one can find $l$ such that

$$l^*\beta\alpha-n > \beta l^*$$
and 
$$\|U^{(q-1)\beta}\|_{l}<+ \infty.$$

 \noindent  Thus by Hölder's inequality
$$\int_{|x|\geq r}\left|U_{P_2}^{q-1}\left(x-\frac{P_2-P_1}{\epsilon}\right)U_{P_1}(x)\right|^\beta \leq C\|U^{(q-1)\beta}\|_l\|U_{P_1}^\beta\|_{L^{l^*}(B(0,r)^c)}.$$

 \noindent  Thus 
$$\left\|\int_{|x - \frac{P_2-P_1}{\epsilon}|\geq r}\left|U_{P_1}^{q-1}(x)U_{P_2}\left(x-\frac{P_2-P_1}{\epsilon}\right)\right| \right\|_{\beta} \leq C\|U^\beta\|_{L^{l^*}(B(0,r)^c)}^{\frac{1}{\beta}}= o(\epsilon).$$

 \noindent  because $\frac{\alpha\beta l^*-n}{\beta l^*}>1.$
$\\$

 \noindent  In conclusion,
$|Res_2| =o(\epsilon)$
$$\\$$
In addition, if $|x-\frac{P_2-P_1}{\epsilon}| \leq r $ then $|x| \geq |\frac{P_2-P_1}{\epsilon}|-|x-\frac{P_2-P_1}{\epsilon}|\geq \frac{\rho}{\epsilon}-\frac{\rho}{2\epsilon}=\frac{\rho}{2\epsilon}$

 \noindent  Thus, by same arguments, we have
$$|Res_1| =o(\epsilon).$$

$$$$
Hence

$$\|I_g\|_X =o(\epsilon). $$
Likewise
$$\|I_f\|_X =o(\epsilon). $$
$\\$$\\$
  \noindent  \underline{Control of 
$I_{f'}+I_{g'}$:}
$\\$
$\\$
Take $\beta = \frac{n}{2}$ or $\beta = \frac{t2n}{(n+2)t-2n} $
$$
||f'(S_{\epsilon,P}) - \sum f'(U_{\epsilon,k})||^{\beta}_{\beta}=||(U_{\epsilon,1}+U_{\epsilon,2})^{p-1}-(U_{\epsilon,1}^{p-1 }+U_{\epsilon,2}^{p-1})||^{\beta}_{\beta} $$
$$=\int_{\mathbb{R}^n}\left|\left[U_{P_1}(x)+U_{P_2}\left(x-\frac{P_2-P_1}{\epsilon}\right)\right]^{p-1}-\left[U^{p-1}_{P_1}(x)+U_{P_2}^{p-1}\left(x-\frac{P_2-P_1}{\epsilon}\right)\right]\right|^\beta dx$$
$$=Res_1 + Res_2$$
$$|Res_2| \leq C\left(\int_{|x|\geq r}\left|\left[ U_{P_1}(x)+U_{P_2}\left(x-\frac{P_2-P_1}{\epsilon}\right)\right]^{p-1}-U^{p-1}_{P_2}\left(x-\frac{P_2-P_1}{\epsilon}\right)\right|^\beta  + \int_{|x|\geq r}|U_{P_1}^{p-1}(x)|^\beta \right) $$

$$ \leq C(\int_{|x|\geq r}|U_{P_1}(x)|^{\beta(p-1)} + \int_{|x|\geq r}U_{P_1}^{\beta (p-1)}(x)) $$
$$\leq C '(\int_{|x |\geq r}U^{\beta(p-1)}_{P_1}(x) ) \leq\frac{C''}{r^{\alpha\beta(p-1)-n}}=o(1)$$
Likewise
$$Res_1 = o(1).$$

 \noindent  We deduce that
$$I_{f'}= o(1)||\phi||_X.$$

 \noindent  Likewise
$$I_{g'}=o(1) ||\phi||_X.$$
$\\$$\\$

$\\$$\\$
In conclusion,

$\forall P\in E,\forall \phi \in X,||T_{\epsilon,P}(\phi)||_X \leq C(||\phi||^q+  o(1)||\phi||+\epsilon).$

We conclude easily because $q>1$.
\end{proof}
$\\$
\begin{proposition}\label{Lipschitz}
$$\\$$
$T_{\epsilon,P}$ is a contraction on $\bar{B}(0,N\epsilon)$ with a lipschitz constant that does not depend on $P$. 
$$\\$$
\end{proposition}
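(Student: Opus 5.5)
We write $T_{\epsilon,P}=\tilde L_{\epsilon,P}^{-1}\Pi_{\epsilon,P}\circ R_{\epsilon,P}$, where $R_{\epsilon,P}(\phi)$ denotes the sum $N_f(\phi)+N_g(\phi)+I_f+I_g+I_{f'}(\phi)+I_{g'}(\phi)+\sum i^*(g'(U_{\epsilon,k})\phi(Q(P_k)-Q_\epsilon))+\sum i^*((Q(P_k)-Q_\epsilon)g(U_{\epsilon,k}))$. By Corollary~\ref{inversibility} and Lemma~\ref{Projection prop}, $\|\tilde L_{\epsilon,P}^{-1}\Pi_{\epsilon,P}\|\le C$ uniformly in $P\in E$ and $\epsilon<\epsilon_0$. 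So it suffices to show that for $\phi_1,\phi_2\in\bar B(0,N\epsilon)$
\[
\|R_{\epsilon,P}(\phi_1)-R_{\epsilon,P}(\phi_2)\|_X\le \theta(\epsilon)\|\phi_1-\phi_2\|_X,
\]
with $\theta(\epsilon)\to0$ uniformly in $P$. Then $C\theta(\epsilon)\le 1/2$ for $\epsilon$ small. Together with Proposition~\ref{Stability2}, which says the ball is mapped into itself, this gives the contraction.

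The terms $I_f,I_g$ and $\sum i^*((Q(P_k)-Q_\epsilon)g(U_{\epsilon,k}))$ do not depend on $\phi$, so they cancel in the difference. The terms linear in $\phi$ are $I_{f'}$, $I_{g'}$ and $\sum i^*(g'(U_{\epsilon,k})(Q(P_k)-Q_\epsilon)\phi)$. Applied to $\phi_1-\phi_2$, the bounds already proved in Proposition~\ref{Stability2} give $o(1)\|\phi_1-\phi_2\|_X$, uniformly in $P$. Indeed, those estimates were of the form $\|\text{operator}\|_{L(X)}=o(1)$: the $L^{n/2}$ and $L^{t2n/((n+2)t-2n)}$ smallness of $f'(S)-\sum f'(U_{\epsilon,k})$ and its $g$ analogue, combined with Hölder, and dominated convergence for the factor $Q_\epsilon-Q(P_k)$.

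The main step is the nonlinear part $N_f(\phi_1)-N_f(\phi_2)$ and $N_g(\phi_1)-N_g(\phi_2)$. Set $S=S_{\epsilon,P}$. We write
\[
g(S+\phi_1)-g(S+\phi_2)-g'(S)(\phi_1-\phi_2)=\bigl[g(S+\phi_1)-g(S+\phi_2)-g'(S+\phi_2)(\phi_1-\phi_2)\bigr]+\bigl[g'(S+\phi_2)-g'(S)\bigr](\phi_1-\phi_2).
\]
The pointwise inequalities recalled in Proposition~\ref{Stability2}, valid since $q<2$, bound this by $C\bigl(|\phi_1-\phi_2|^{q}+|\phi_2|^{q-1}|\phi_1-\phi_2|\bigr)$. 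For a sharper estimate we instead use the mean value form. The first bracket is $\int_0^1 (g'(S+\phi_2+t(\phi_1-\phi_2))-g'(S+\phi_2))\,dt\,(\phi_1-\phi_2)$, which is bounded by $C(|\phi_1|^{q-1}+|\phi_2|^{q-1})|\phi_1-\phi_2|$. So the whole expression is at most $C(|\phi_1|^{q-1}+|\phi_2|^{q-1})|\phi_1-\phi_2|$.

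We then take norms in $L^\beta$ for $\beta=\frac{2n}{n+2}$ and $\beta=\frac{ns}{n+2s}$ and apply Lemma~\ref{control L^s} and the continuity of $i^*$.
\begin{itemize}
\item For $\beta=\frac{ns}{n+2s}$, Hölder with $\frac1\beta=\frac2n+\frac1s$ gives $\||\phi_i|^{q-1}\|_{n/2}\|\phi_1-\phi_2\|_s$. Here $\|\phi_i^{q-1}\|_{n/2}=\|\phi_i\|^{q-1}_{n(q-1)/2}\le C\|\phi_i\|_X^{q-1}$ by interpolation, since $s\le \frac n2(q-1)\le 2^*$.
\item For $\beta=\frac{2n}{n+2}$, Hölder with $\frac{n+2}{2n}=\frac{q-1}{r(q-1)}+\frac1{r'}$ places both factors in exponents lying in $[s,2^*]$. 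One such choice is $|\phi_i|^{q-1}$ in $L^{n/2}$ and $\phi_1-\phi_2$ in $L^{2^*}$.
\end{itemize}
This yields $\|N_g(\phi_1)-N_g(\phi_2)\|_X\le C(N\epsilon)^{q-1}\|\phi_1-\phi_2\|_X$. The same argument works for $f$ with $p-1$ in place of $q-1$, using $|f'(x)-f'(y)|\le C|x-y|^{p-1}$ when $p<2$. These constants involve only $\|Q\|_\infty$, hence are independent of $P$.

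Collecting the three groups gives $\theta(\epsilon)=C\bigl((N\epsilon)^{q-1}+(N\epsilon)^{p-1}+o(1)\bigr)$. This is independent of $P$ and tends to $0$, which concludes the proof.
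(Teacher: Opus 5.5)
Your proof is correct and follows the paper's argument. It uses the same splitting of $N_f,N_g$ around $S_{\epsilon,P}+\phi_2$ (as in Micheletti--Pistoia), Hölder with $|\phi_i|^{q-1}\in L^{n/2}$, the $o(1)$ operator bounds on the terms linear in $\phi$ from Proposition~\ref{Stability2}, and the uniform bound on $\tilde L_{\epsilon,P}^{-1}\Pi_{\epsilon,P}$. The only cosmetic difference is that you use the mean-value form to absorb the first bracket into $C(|\phi_1|^{q-1}+|\phi_2|^{q-1})|\phi_1-\phi_2|$, whereas the paper keeps a term $\|\phi_1-\phi_2\|_X^{q}$ and relies implicitly on $\|\phi_1-\phi_2\|_X\le 2N\epsilon$.
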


\begin{proof}
$$T_{\epsilon,P}\phi_1 - T_{\epsilon,P}\phi_2=(\tilde{L}_{\epsilon,P})^{-1}\Pi_{\epsilon,P}(N_f(\phi_1)-N_f(\phi_2)+N_g(\phi_1)-N_g(\phi_2) + \sum i^*(g'(U_{\epsilon,k})(Q(P_k)-Q_\epsilon)(\phi_1-\phi_2)) $$
$$+I_{f'}(\phi_1)-I_{f'}(\phi_2)+I_{g'}(\phi_1)-I_{g'}(\phi_2))$$

Firstly, as in \cite{MichelettiPistoia2003},
$$||N_f(\phi_2) - N_f(\phi_1)||_X =||i^*(f(S_{\epsilon,P}+\phi_2)-f(S_{\epsilon,P}+\phi_1)-f'(S_{\epsilon,P})(\phi_2-\phi_1))||_X $$ $$\leq ||i^*(f(S_{\epsilon,P}+\phi_2)-f(S_{\epsilon,P}+\phi_1)-f'(S_{\epsilon,P}+\phi_2)(\phi_2-\phi_1)||_X + ||(i^*((f'(S_{\epsilon,P}+\phi_2)-f'(S_{\epsilon,P}))(\phi_2-\phi_1))||_X$$
$$\leq C(||\phi_1-\phi_2||_X^p+ ||\phi_2||_X^{p-1}||\phi_1-\phi_2||_X)$$

Indeed, we use the following facts

\begin{itemize}
    \item $s\leq (q-1)\frac{n}{2}\leq (p-1)\frac{n}{2}\leq (\frac{n+2}{n-2}-1)\frac{n}{2}=2^*$ Thus, $\phi_2^{p-1}\in L^{\frac{n}{2}}; $ 
    \item In addition, $$||(i^*((f'(S_{\epsilon,P}+\phi_2)-f'(S_{\epsilon,P})(\phi_2-\phi_1))||_s\leq |||\phi_2|^{p-1}|\phi_1-\phi_2|||_{\frac{ns}{n+2s}} \leq ||\phi_2^{p-1}||_{\frac{n}{2}}||\phi_1-\phi_2||_X$$ $$=||\phi_2||_{\frac{n(p-1)}{2}}^{p-1}||\phi_1-\phi_2||_X$$

    \item Then, $$||(i^*((f'(S_{\epsilon,P}+\phi_2)-f'(S_{\epsilon,P}))(\phi_2-\phi_1))||_{\frac{2n}{n+2}}\leq ||\phi^{p-1}||_{\frac{n}{2}}||||\phi_1-\phi_2||_{\frac{2n}{n-2}}$$
    because,$\frac{1}{\frac{2n}{n+2}}=\frac{1}{\frac{n}{2}}+\frac{1}{\frac{2n}{n-2}}$.
\end{itemize}
Furthermore, 
$$||\sum i^*(g'(U_{\epsilon,k})(Q(P_k)-Q_\epsilon)(\phi_1-\phi_2))||_X \leq C||\phi_1-\phi_2||_X o(1)$$

Finally, we have by the previous estimation ( independent of $P$)
$$||(I_{f'}+I_{g'})(\phi_1)-(I_{f'}+I_{g'})(\phi_2)||\leq o(1)||\phi_1-\phi_2||_X$$
Thus, 
$||T_{\epsilon,P}(\phi_1)-T_{\epsilon,P}(\phi_2)||_X \leq  \ C(||\phi_1-\phi_2||_X^q+ ||\phi_2||^{q-1}||\phi_1-\phi_2||_X + ||\phi_1-\phi_2||_X o(1))$
\medskip
\noindent Then,
$$\| T_{\epsilon,P}(\phi_1)- T_{\epsilon,P}(\phi_2)\|_X \leq C_0(\epsilon)\|\phi_1-\phi_2\|_X $$
with $0<C_0(\epsilon) <1$ for $\epsilon $ small enough (which does not depend on $P$) because $\|\phi_1\|_X,\|\phi_2\|_X\leq N\epsilon$.

\end{proof}
We conclude by applying the following version of Banach's fixed-point theorem.

$\\$
\begin{lemma}\label{Banach}
$\\$

Consider $(Y,d)$ a complete metric space and $\Gamma \subset \mathbb{R}^m$. Consider $f: Y \times \Gamma  \rightarrow Y$. Suppose that $\gamma \mapsto f(y,\gamma)$ is continuous for all $y \in Y$ and there exists $k \in (0,1)$ such that for all $\gamma\in \Gamma$

$$d(f(x,\gamma),f(y,\gamma))\leq kd(x,y).$$
Then $f(.,\gamma)$ has a unique fixed point $x_\gamma$ for all $\gamma$. In addition, $\gamma \mapsto x_\gamma$ is continuous.   
\end{lemma}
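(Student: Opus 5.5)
The plan is the classical Banach fixed point argument, followed by a separate continuity argument for the fixed point in the parameter.

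\emph{Existence and uniqueness.} Fix $\gamma\in\Gamma$. The map $f(\cdot,\gamma)$ is a $k$-contraction on the complete space $(Y,d)$. Start from any $y_0$ and iterate $y_{j+1}=f(y_j,\gamma)$. This gives
\[
d(y_{j+1},y_j)\le k^j d(y_1,y_0),
\]
so the sequence is Cauchy and converges to some $x_\gamma$. Since $f(\cdot,\gamma)$ is Lipschitz, hence continuous, passing to the limit shows that $x_\gamma$ is a fixed point. If $x,x'$ are two fixed points, then $d(x,x')\le k\,d(x,x')$ with $k<1$, so $x=x'$.

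\emph{Continuity in $\gamma$.} This is the only step that uses the hypotheses beyond the contraction property. Two facts matter: $k$ is the same for every $\gamma$, and $\gamma\mapsto f(y,\gamma)$ is continuous for each fixed $y$. For $\gamma,\gamma'\in\Gamma$, the triangle inequality gives
\[
d(x_\gamma,x_{\gamma'})\le d\bigl(f(x_\gamma,\gamma),f(x_\gamma,\gamma')\bigr)+d\bigl(f(x_\gamma,\gamma'),f(x_{\gamma'},\gamma')\bigr).
\]
The second term is at most $k\,d(x_\gamma,x_{\gamma'})$, so
\[
d(x_\gamma,x_{\gamma'})\le \frac{1}{1-k}\,d\bigl(f(x_\gamma,\gamma),f(x_\gamma,\gamma')\bigr).
\]
Here the point $x_\gamma$ is frozen. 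Letting $\gamma'\to\gamma$, the right-hand side tends to $0$ by continuity of $\gamma'\mapsto f(x_\gamma,\gamma')$, so $\gamma\mapsto x_\gamma$ is continuous at $\gamma$.

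\emph{Expected obstacle.} The real difficulty is applying the lemma in the paper's setting, not proving it. The spaces $K_{\epsilon,P}^\perp$ depend on $P$, so there is no single space $Y$ on which all the maps $T_{\epsilon,P}$ act. To get around this, I would regard $Y=\bar B(0,N\epsilon)\subset X$ and replace $T_{\epsilon,P}$ by $T_{\epsilon,P}\circ\Pi_{\epsilon,P}$. The fixed points are unchanged, because the image of $T_{\epsilon,P}$ already lies in $K_{\epsilon,P}^\perp$.

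Three things would then need checking:
\begin{itemize}
\item Self-mapping of $Y$, from Proposition~\ref{Stability2} together with the uniform bound on $\Pi_{\epsilon,P}$.
\item A contraction constant independent of $P$, from Proposition~\ref{Lipschitz}.
\item Continuity in $P$ for fixed $\phi$. This follows from Lemma~\ref{Projection prop}, the continuity of $P\mapsto\tilde L_{\epsilon,P}^{-1}$ (because inversion is continuous and $\tilde L_{\epsilon,P}$ depends continuously on $P$), and the continuity of translations in $X$.
\end{itemize}
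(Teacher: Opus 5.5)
Your argument is correct and is the standard proof of this parametrized contraction principle: iteration gives existence and uniqueness, and the estimate $d(x_\gamma,x_{\gamma'})\le (1-k)^{-1}d\bigl(f(x_\gamma,\gamma),f(x_\gamma,\gamma')\bigr)$ gives continuity; the paper quotes the lemma without proof, as a known version of Banach's theorem. On the application, your composition with $\Pi_{\epsilon,P}$ is harmless but unnecessary, because the paper's $T_{\epsilon,P}$ is already defined on all of $X$ with values in $K_{\epsilon,P}^\perp$ (so it takes $Y=X\cap\bar B(0,N\epsilon)$ directly). Also, continuity in $P$ is obtained there from the uniform lower bound of Proposition~\ref{Uniformity}, not from continuity of inversion; on the $P$-dependent spaces $K_{\epsilon,P}^\perp$, that inversion argument would first require extending $\tilde L_{\epsilon,P}$ to an invertible operator on all of $X$.
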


$\\$$\\$
We need thus to show the next result to conclude
\begin{lemma}\label{continuité phi}

Consider $\phi \in X$ and two disjoints compact sets $E_1$ and $E_2$. Set $E=E_1\times E_2$. Then, $P \mapsto T_{\epsilon,P}\phi \in X$ is continuous. 

\end{lemma}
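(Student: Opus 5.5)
The plan is to show that each ingredient of $T_{\epsilon,P}\phi=\tilde L_{\epsilon,P}^{-1}\Pi_{\epsilon,P}R_{\epsilon,P}(\phi)$ depends continuously on $P\in E$ for fixed $\epsilon<\epsilon_0$ and fixed $\phi\in X$. Here $R_{\epsilon,P}(\phi)$ denotes the right-hand side $N_f(\phi)+N_g(\phi)+I_f+I_g+I_{f'}+I_{g'}+\sum i^*(g'(U_{\epsilon,k})\phi(Q(P_k)-Q_\epsilon))+\sum i^*((Q(P_k)-Q_\epsilon)g(U_{\epsilon,k}))$. Since $i^*$ is continuous from $L^{\frac{2n}{n+2}}\cap L^{\frac{ns}{n+2s}}$ to $X$ (continuity of $i^*$ and Lemma \ref{control L^s}), it suffices to prove that each integrand varies continuously with $P$ in both norms $\|\cdot\|_{\frac{2n}{n+2}}$ and $\|\cdot\|_{\frac{ns}{n+2s}}$.

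\textbf{Step 1: the integrands.} Let $P_m\to P$ in $E$. The key observation is that the map $P\mapsto U_{\epsilon,P_k}=Q(P_k)^aU(Q(P_k)^b(\cdot-P_k/\epsilon))$ is continuous into every $L^\gamma$ with $U\in L^\gamma$. This follows from dominated convergence, exactly as in Lemma \ref{ContinuityP}: $Q(P_k)\in[\nu,M]$ on $E$, and a domination of the form $C\,U(\nu^b(|x|-R)^+)$, with $R$ bounding $|P_k|/\epsilon$ on $E$, is integrable. The same argument applies to $U^{q-1}$, $U^{p-1}$, and to the nonlinear expressions $f(S_{\epsilon,P}+\phi)$, $g(S_{\epsilon,P}+\phi)$, $f'(S_{\epsilon,P})\phi$, and so on. Each is a composition of locally H\"older-continuous functions with $S_{\epsilon,P_m}\to S_{\epsilon,P}$ pointwise. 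Domination comes from $|f(S+\phi)|\le C(S^p+|\phi|^p)$ and similar bounds, with $S$ dominated uniformly in $m$ as above. The integrability of these majorants in the two norms is precisely what was checked in the proof of Proposition \ref{Stability2}: it uses $\beta(\alpha q-1)>n$ and the interpolation conditions on $s$. For the terms containing $Q(P_k)-Q_\epsilon$, continuity of $Q$ combined with the bound $|Q(P_k)-Q(\epsilon x)|\le 2\|Q\|_\infty$ gives dominated convergence directly. For the products with $\phi$, I would apply H\"older as in Lemma \ref{continuity 1}, with $\|\phi\|_s$, $\|\phi\|_{2^*}$ fixed and the coefficient difference going to $0$ in $L^{n/2}$ or $L^n$. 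Together these show that $P\mapsto R_{\epsilon,P}(\phi)\in X$ is continuous.

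\textbf{Step 2: the projection and the inverse.} By Lemma \ref{Projection prop}, $P\mapsto\Pi_{\epsilon,P}$ is continuous, and I expect continuity in operator norm from the explicit Gram-matrix formula, since $P\mapsto\partial_{x_j}U_{\epsilon,k}$ is continuous in $X$. By Step 1 applied to the linear coefficients, $P\mapsto L_{\epsilon,P}$ is continuous in $L(X)$, as in Lemma \ref{ContinuityP}. The delicate point is $\tilde L_{\epsilon,P}^{-1}$, because its domain $K_{\epsilon,P}^\perp$ moves with $P$. To handle this, I would extend it to all of $X$ by setting
\[
A_{\epsilon,P}:=\Pi_{\epsilon,P}L_{\epsilon,P}\Pi_{\epsilon,P}+(Id-\Pi_{\epsilon,P}),
\]
which is invertible on $X$ and satisfies $A_{\epsilon,P}^{-1}=\tilde L_{\epsilon,P}^{-1}$ on $K_{\epsilon,P}^\perp$. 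Its inverse is uniformly bounded by Corollary \ref{inversibility} and Lemma \ref{Projection prop}. Since $P\mapsto A_{\epsilon,P}$ is norm-continuous and inversion is continuous on invertible operators, $P\mapsto A_{\epsilon,P}^{-1}$ is continuous.

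\textbf{Conclusion.} Writing $T_{\epsilon,P}\phi=A_{\epsilon,P}^{-1}\Pi_{\epsilon,P}R_{\epsilon,P}(\phi)$, the map $P\mapsto T_{\epsilon,P}\phi$ is a product of continuous maps, hence continuous. I expect the main obstacle to be Step 1: producing integrable majorants, uniform in $m$, for the interaction terms $I_f$, $I_g$, $I_{f'}$, $I_{g'}$ in the $L^{\frac{ns}{n+2s}}$ norm. These terms are controlled only through the decay condition $\beta(\alpha q-1)>n$, so it must be checked that translating the centres within a bounded set preserves the majorants.
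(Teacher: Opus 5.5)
Your argument is correct. It shares the paper's skeleton: $\Pi_{\epsilon,P}$, $L_{\epsilon,P}$ and the right-hand side depend continuously on $P$, and some device is needed because $\tilde L_{\epsilon,P}^{-1}$ acts on the moving space $K_{\epsilon,P}^\perp$. The difference is in that device.

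The paper does not extend the inverse. It applies the coercivity estimate of Proposition~\ref{Uniformity} to the element $T_{\epsilon,P}\phi-\Pi_{\epsilon,P}T_{\epsilon,P_m}\phi$, which lies in $K_{\epsilon,P}^\perp$. It then bounds $\|T_{\epsilon,P}\phi-T_{\epsilon,P_m}\phi\|_X$ by three quantities: $\|\tilde L_{\epsilon,P}\Pi_{\epsilon,P}-\tilde L_{\epsilon,P_m}\Pi_{\epsilon,P_m}\|\,\|T_{\epsilon,P_m}\phi\|_X$, $\|\Pi_{\epsilon,P}-\Pi_{\epsilon,P_m}\|\,\|T_{\epsilon,P_m}\phi\|_X$, and the variation of $\Pi_{\epsilon,P}R_{\epsilon,P}(\phi)$. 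This is a hands-on version of continuity of inversion. It reuses an estimate already proved and gives an explicit modulus, but it tacitly needs $\sup_m\|T_{\epsilon,P_m}\phi\|_X<\infty$, that is, the uniform bound on $\tilde L_{\epsilon,P}^{-1}$.

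Your block operator $A_{\epsilon,P}=\Pi_{\epsilon,P}L_{\epsilon,P}\Pi_{\epsilon,P}+(Id-\Pi_{\epsilon,P})$ equals $\tilde L_{\epsilon,P}$ on $K_{\epsilon,P}^\perp$ and the identity on $K_{\epsilon,P}$. It reduces everything to continuity of inversion in $L(X)$, and for that you only need invertibility at each $P$, not a uniform estimate.

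Your Step~1 also supplies what the paper only asserts with ``the same calculations than before''. The majorant $U(\nu^b(|x|-R)^+)$ handles the moving centres $P_k/\epsilon$. Lemma~\ref{ContinuityP} is stated for untranslated profiles $U_P$, so it does not literally cover this.

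The obstacle you anticipate at the end is not real. At fixed $\epsilon$ no interaction estimate is needed. It suffices that $U^p,U^q\in L^{\frac{2n}{n+2}}\cap L^{\frac{ns}{n+2s}}$ and $U^{p-1},U^{q-1}\in L^{n/2}$. These follow from $\alpha q\frac{ns}{n+2s}>n$ (weaker than $\beta(\alpha q-1)>n$), from $2(n-2)q>n+2$, and from $(n-2)(q-1)>2$. For $N_f$ and $N_g$ the fixed majorants $C|\phi|^p$ and $C|\phi|^q$ are simpler still; they are valid because $p,q<2$.
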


\begin{proof}
Fix a sequence $(P_m)$ such that $P_m \rightarrow P$. We have
$$||\tilde{L}_{\epsilon,P}T_{\epsilon,P}\phi -\tilde{L}_{\epsilon,P_m}T_{\epsilon,P_m}\phi||_X$$$$\geq  ||\tilde{L}_{\epsilon,P}[T_{\epsilon,P} -\Pi_{\epsilon,P}T_{\epsilon,P_m}]\phi||_X-||\tilde{L}_{\epsilon,P}\Pi_{\epsilon,P}T_{\epsilon,P_m}\phi-\tilde{L}_{\epsilon,P_m}\Pi_{\epsilon,P_m}T_{\epsilon,P_m}\phi||_X$$

because $\tilde{L}_{\epsilon,P_m}\Pi_{\epsilon,P_m}T_{\epsilon,P_m}\phi=\tilde{L}_{\epsilon,P_m}T_{\epsilon,P_m}\phi$.
Thanks to the definition $T_{\epsilon,P}$, 
$[T_{\epsilon,P}\phi -\Pi_{\epsilon,P}T_{\epsilon,P_m}]\phi \in K_{\epsilon,P}^\perp$. Then, by the proposition \ref{Uniformity}
$$||\tilde{L}_{\epsilon,P}[T_{\epsilon,P}\phi -\Pi_{\epsilon,P}T_{\epsilon,P_m}]\phi||_X\geq C||[T_{\epsilon,P}\phi -\Pi_{\epsilon,P}T_{\epsilon,P_m}]\phi||_X.$$
Thus, 

$$||T_{\epsilon,P}\phi - T_{\epsilon,P_m}\phi||\leq \frac{1}{C} [||\tilde{L}_{\epsilon,P}\Pi_{\epsilon,P}T_{\epsilon,P_m}\phi-\tilde{L}_{\epsilon,P_m}\Pi_{\epsilon,P_m}T_{\epsilon,P_m}\phi||_X+$$$$||\tilde{L}_{\epsilon,P}T_{\epsilon,P}\phi -\tilde{L}_{\epsilon,P_m}T_{\epsilon,P_m}\phi||_X]+||(Id-\Pi_{\epsilon,P})T_{\epsilon,P_m}\phi||_X$$

But,
$$||\tilde{L}_{\epsilon,P}\Pi_{\epsilon,P}T_{\epsilon,P_m}\phi-\tilde{L}_{\epsilon,P_m}\Pi_{\epsilon,P_m}T_{\epsilon,P_m}\phi||_X\leq ||\tilde{L}_{\epsilon,P}\Pi_{\epsilon,P}-\tilde{L}_{\epsilon,P_m}\Pi_{\epsilon,P_m}||_X||T_{\epsilon,P_m}\phi||_X \rightarrow 0$$
thanks to lemma \ref{ContinuityP} and \ref{Projection prop}.
But, 

$$||(Id-\Pi_{\epsilon,P})T_{\epsilon,P_m}\phi||_X\leq ||\Pi_{\epsilon,P}-\Pi_{\epsilon,P_m}||_X||T_{\epsilon,P_m}\phi||_X\rightarrow 0$$
because $T_{\epsilon,P_m}\phi=\Pi_{\epsilon,P_m}T_{\epsilon,P_m}\phi$. 
Finally,$$ \tilde{L}_{\epsilon,P}T_{\epsilon,P}\phi=\Pi_{\epsilon,P}(N_f(\phi) + N_g(\phi) + I_f + I_g   +  I_{f'} + I_{g'}  + $$$$\sum i^*(g'(U_{\epsilon,k})\phi(Q(P_k)-Q_{\epsilon}))+  \sum i^*((Q(P_k)-Q_{\epsilon}(x))g(U_{\epsilon,k})) $$

But $||\Pi_{\epsilon,P}||\leq C'$ with a constant $C'>0$ thanks to lemma \ref{Projection prop}. In addition, the same calculations than before show that
$$P \mapsto N_f(\phi) + N_g(\phi) + I_f + I_g   +  I_{f'} + I_{g'}  + $$$$\sum i^*(g'(U_{\epsilon,k})\phi(Q(P_k)-Q_{\epsilon}))+  \sum i^*((Q(P_k)-Q_{\epsilon}(x))g(U_{\epsilon,k})$$
is continuous. 
\end{proof} 
Using the lemma \ref{Banach} and \ref{continuité phi} with $Y=X\cap \bar{B}(0,N\epsilon)$ and $f= T_{\epsilon,P}$, we conclude that there exists, $\ \phi_{\epsilon,P} \in Y$ such that $T_{\epsilon,P}\phi_{\epsilon,P}=\phi_{\epsilon,P}, \forall P\in  E,\forall \epsilon<\epsilon_0 $.

\subsection{Solving the bifurcation equation}

Let $\psi_{k,j}=\frac{\partial U_{\epsilon,k  }}{\partial x_j}$. Fix $k \in \{1,2\}$ and $j \in \{1,...,n\}$. Define
$$G_{\epsilon,k,j}(P)=\langle -W_{\epsilon,P}  + i^*(f(W_{\epsilon,P} ) - g(W_{\epsilon,P} )Q_{\epsilon})|\psi_{k,j}\rangle$$ where $W= W_{\epsilon,P}=S_{\epsilon,P}+\phi_{\epsilon,P}$. It follows from the preceding result that $G_{\epsilon,k,j}$ is continuous. As in \cite{MichelettiPistoia2003}, we seek $P_{\epsilon}$  such that $G_{\epsilon,k,j}(P_{\epsilon}) =0$ for all $k$ and $j$ to conclude. We proceed as in \cite{MichelettiPistoia2003} and will show that the Brouwer degree of $P \rightarrow (\frac{G_{\epsilon,k,j}(P)}{Q(P_k)^l\epsilon})_{k,j}$ with some $l$ is not equal to 0 when $\epsilon$ is small enough. $$$$
Until now, we have worked with an arbitrary compact set $E$. Now introduce for $k\in\{1,2\}$, $R_k$ a real such that $Q_{|B(\xi_k,R_k)}$ has only $\xi_k$ as a critical point (it is possible because $\xi_k$ is a nondegenerate critical point) and such that $d(B(\xi_1,R_1),B(\xi_2,R_2))>0$. Until the end we will consider $E=\bar{B}(\xi_1,R_1)\times\bar{B}(\xi_2,R_2) $. We want to show the following result: 
\begin{proposition}\label{Brouwer 2}
$\\$$\\$
There is a constant $C$ such that
 $F_{\epsilon}:P \rightarrow (\frac{CG_{\epsilon,k,j}(P)}{Q(P_k)^{a(q+1)-bn}\epsilon})_{k,j}$ converges uniformly to $\nabla \tilde Q$ when $\epsilon \rightarrow0$ 
where $\tilde{Q}:(P_1,P_2) \rightarrow \sum_{i} Q(P_i)$.

\end{proposition}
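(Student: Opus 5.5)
We expand $G_{\epsilon,k,j}(P)$ using $W=S_{\epsilon,P}+\phi_{\epsilon,P}$, the definition of $i^*$, and the identity $U_{\epsilon,k}=i^*(f(U_{\epsilon,k})-Q(P_k)g(U_{\epsilon,k}))$. This gives
\[
G_{\epsilon,k,j}(P)=\int_{\mathbb{R}^n}\bigl(f(W)-Q_\epsilon g(W)\bigr)\psi_{k,j}-\sum_{k'}\int_{\mathbb{R}^n}\bigl(f(U_{\epsilon,k'})-Q(P_{k'})g(U_{\epsilon,k'})\bigr)\psi_{k,j}-\langle\phi_{\epsilon,P},\psi_{k,j}\rangle .
\]
The last term vanishes because $\phi_{\epsilon,P}\in K_{\epsilon,P}^\perp$. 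Next we Taylor expand $f(W)$ and $g(W)$ around $S_{\epsilon,P}$. The quantity $G_{\epsilon,k,j}(P)$ then splits into four groups of terms. The main term is
\[
M=\int_{\mathbb{R}^n}(Q(P_k)-Q_\epsilon)\,g(U_{\epsilon,k})\,\psi_{k,j}.
\]
The linear terms in $\phi$ are $\int (f'(S)-Q_\epsilon g'(S))\phi\,\psi_{k,j}$. The quadratic remainders are $N_f,N_g$ tested against $\psi_{k,j}$. The interaction terms are $I_f,I_g$ tested against $\psi_{k,j}$, together with the cross contributions $(Q(P_{k'})-Q_\epsilon)g(U_{\epsilon,k'})\psi_{k,j}$ for $k'\neq k$.

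\textbf{Main term.} We translate by $P_k/\epsilon$ and use $Q(P_k+\epsilon y)-Q(P_k)=\epsilon\nabla Q(P_k)\cdot y+O(\epsilon^2|y|^2)$. Radial symmetry gives
\[
\int y_i\,U_{P_k}^q\,\partial_jU_{P_k}\,dy=-\frac{\delta_{ij}}{q+1}\int U_{P_k}^{q+1},
\]
after integration by parts, since $U^q\partial_jU=\partial_j U^{q+1}/(q+1)$. Hence
\[
M=\frac{\epsilon}{q+1}\,\partial_jQ(P_k)\int U_{P_k}^{q+1}+O(\epsilon^2).
\]
Rescaling yields $\int U_{P_k}^{q+1}=Q(P_k)^{a(q+1)-bn}\|U\|_{q+1}^{q+1}$. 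This explains the normalization in the statement, with $C=(q+1)/\|U\|_{q+1}^{q+1}$. The $O(\epsilon^2)$ remainder requires $\int|y|^2U^q|\nabla U|<\infty$, i.e. $\alpha q+\alpha+1-2>n$, which holds comfortably. The sign convention is checked by the same integration by parts.

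\textbf{Error terms.} It remains to show that every other term is $o(\epsilon)$ uniformly for $P\in E$.

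For the linear terms, we first use that $\psi_{k,j}$ nearly solves the linearized equation, $-\Delta\psi_{k,j}=(f'(U_{\epsilon,k})-Q(P_k)g'(U_{\epsilon,k}))\psi_{k,j}$. Testing this against $\phi\in K_{\epsilon,P}^\perp$ gives
\[
\int (f'(U_{\epsilon,k})-Q(P_k)g'(U_{\epsilon,k}))\phi\,\psi_{k,j}=\langle\phi,\psi_{k,j}\rangle=0 .
\]
So only differences remain. The first difference, $(f'(S)-f'(U_{\epsilon,k}))\phi\psi_{k,j}$, is estimated as in the control of $I_{f'}$ and gives $o(1)\|\phi\|_X=o(\epsilon)$. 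The second, $(Q_\epsilon-Q(P_k))g'(U_{\epsilon,k})\phi\psi_{k,j}$, is $O(\epsilon)\|\phi\|_X=O(\epsilon^2)$.

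For the quadratic remainders, Hölder and the bound $\|N_g(\phi)\|\le C\|\phi\|^q$ give $O(\epsilon^q)=o(\epsilon)$, since $q>1$.

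\textbf{Main obstacle.} The hardest part is the interaction terms, namely
\[
\int\bigl((U_{\epsilon,1}+U_{\epsilon,2})^q-U_{\epsilon,1}^q-U_{\epsilon,2}^q\bigr)\psi_{k,j}
\]
and the $f$ analogue. Because of the algebraic decay, these do not reduce to exponentially small quantities. The plan is as follows. Split space into $|x-P_k/\epsilon|\le r$ and its complement, with $r=\rho/(2\epsilon)$. On the ball, apply Lemma \ref{Control puissance q}. The resulting term is of size
\[
\int U_{P_k}^{q-1}|\nabla U_{P_k}|\,U_{P_{k'}}(\cdot-d/\epsilon),
\]
where $U_{P_{k'}}(\cdot-d/\epsilon)=O(\epsilon^{\alpha})$ on that ball. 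Since $\alpha=n-2\ge7$, this term is $o(\epsilon)$. On the complement, $|\nabla U_{P_k}|\le C\epsilon^{\alpha+1}$ up to a polynomial factor. We then control the remaining integral of $U^q$-type quantities by Hölder, as in the estimate of $Res_2$, using $\beta(\alpha q-1)>n$.

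The cross terms with $(Q(P_{k'})-Q_\epsilon)g(U_{\epsilon,k'})$ are handled the same way, since $\psi_{k,j}$ is small where $U_{\epsilon,k'}$ lives.

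All of these bounds depend on $P$ only through $Q(P)\in[\nu,M]$, so they are uniform on $E$. Together with the uniform continuity of $\nabla Q$, this gives the uniform convergence of $F_\epsilon$ to $\nabla\tilde Q$.
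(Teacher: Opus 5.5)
Your proposal is correct and is essentially the paper's proof. It uses the same splitting of $G_{\epsilon,k,j}$, the same cancellation of the leading linear term via $-\Delta\psi_{k,j}=(f'(U_{\epsilon,k})-Q(P_k)g'(U_{\epsilon,k}))\psi_{k,j}$ together with $\phi_{\epsilon,P}\in K_{\epsilon,P}^\perp$, and the same integration by parts yielding $\frac{\epsilon}{q+1}\partial_jQ(P_k)\,Q(P_k)^{a(q+1)-bn}\int_{\mathbb{R}^n}U^{q+1}$.

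The remaining differences are bookkeeping:
\begin{itemize}
\item The paper does not re-estimate the interaction terms. It uses $|\langle I_f+I_g|\psi_{k,j}\rangle|\le C\|I_f+I_g\|_X=o(\epsilon)$ from the stability estimate, so your ``main obstacle'' is already settled there.
\item It bounds the cross terms with $|Q(P_{k'})-Q_\epsilon|\le C\epsilon|x-P_{k'}/\epsilon|$ and $L^2*L^2\subset C_0$, instead of your ball/complement split.
\item It handles $(f'(S_{\epsilon,P})-f'(U_{\epsilon,k}))\phi\,\psi_{k,j}$ by a split that exploits the decay of $\psi_{k,j}$. Citing the $I_{f'}$ bound alone, as you do, would not suffice, since $f'(S_{\epsilon,P})-f'(U_{\epsilon,k})$ is not small in $L^{n/2}$.
\end{itemize}
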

$\\$
\begin{proof}
    
Consider ${k_0}$ and $j$. For the sake of simplicity, we will suppose $k_0=1$.

$$G_{\epsilon,{k_0},j}(P)=\langle -W_{\epsilon,P} + i^*(f(W_{\epsilon,P} ) - g(W_{\epsilon,P} )Q_\epsilon)|\psi_{{k_0},j}\rangle_{D^{1,2}(\mathbb{R}^n)} $$

$$=-\sum\langle i^*(g(U_{\epsilon,{k}})(Q_{\epsilon}-Q(P_{k}))|\psi_{{k_0},j}\rangle_{D^{1,2}(\mathbb{R}^n)} +\langle I_f|\psi_{{k_0},j}\rangle_{D^{1,2}(\mathbb{R}^n)} +\langle I_g|\psi_{{k_0},j}\rangle_{D^{1,2}(\mathbb{R}^n)} $$
$$+\langle N_f(\phi)|\psi_{{k_0},j}\rangle_{D^{1,2}(\mathbb{R}^n)} +    \langle N_g(\phi)|\psi_{{k_0},j}\rangle_{D^{1,2}(\mathbb{R}^n)} +\langle i^*((f'(S_{\epsilon,P})-f'(U_{\epsilon,{k_0}}))\phi| \psi_{{k_0},j}\rangle_{D^{1,2}(\mathbb{R}^n)} -$$$$\langle i^*((g'(S_{\epsilon,P})-g'(U_{\epsilon,{k_0}}))Q_{\epsilon}\phi|\psi_{{k_0},j}\rangle_{D^{1,2}(\mathbb{R}^n)} +\langle i^*(g'(U_{\epsilon,{k_0}})(Q(P_{k_0})-Q_{\epsilon})\phi|\psi_{{k_0},j}\rangle_{D^{1,2}(\mathbb{R}^n)} $$

because thanks to Lemma \ref{non-degeneracy of the solution},
$$\langle L\phi,\psi_{{k_0},j}\rangle=\langle \phi,L\psi_{{k_0},j}\rangle=0$$

with $L\phi=\phi-i^*((f'(U_{\epsilon,{k_0}})-Q(P_{k_0})g'(U_{\epsilon,{k_0}}))\phi)$.
$\\$$\\$

\noindent \underline{Control of 
$\langle N_f(\phi)| \psi_{{k_0},j}\rangle_{D^{1,2}(\mathbb{R}^n)}  +\langle N_g(\phi)|\psi_{{k_0},j}\rangle_{D^{1,2}(\mathbb{R}^n)} $:}

\medskip

 We have,

$$|\langle N_f(\phi)+N_g(\phi)|\psi_{{k_0},j}\rangle_{D^{1,2}(\mathbb{R}^n)} |\leq \|N_f(\phi)+N_g(\phi)\|_{D^{1,2}(\mathbb{R}^n)}\|\psi_{{k_0},j}\|_{D^{1,2}(\mathbb{R}^n)}\leq C \|N_f(\phi)+N_g(\phi)\|_X$$$$\leq C \|\phi\|_X^{q}\leq C'\epsilon^q$$

Nonetheless, $q>1$. Then
$$|\langle N_f(\phi)+N_g(\phi)|\psi_{{k_0},j}\rangle_{D^{1,2}(\mathbb{R}^n)} |=o(\epsilon)$$

\medskip
$\\$$\\$
\underline{Control of 
$\langle I_f(\phi)+I_g(\phi)|\psi_{{k_0},j}\rangle$}:
$\\$

We have
$$|\langle I_f|\psi_{{k_0},j}\rangle| \leq \|I_f\|_{D_{1,2}} \|\psi_{{k_0},j}\|_{D_{1,2}} \leq C \|I_f\|_X=o(\epsilon )$$

Likewise
$$|\langle I_g|\psi_{k_0,j}\rangle| = o(\epsilon)$$

$\\$$\\$
\medskip
\underline{Control of 
$\langle i^*(g'(U_{\epsilon,1})(Q(P_1)-Q_{\epsilon})\phi|\psi_{{k_0},j}\rangle$:}

By definition of $i^*$

$$\langle i^*(g'(U_{\epsilon,1})(Q(P_1)-Q_{\epsilon} )\phi)|\psi_{{k_0},j}\rangle_{D^{1,2}   (\mathbb{R}^n)}=\langle g'(U_{\epsilon,1})(Q(P_1)-Q_{\epsilon} )\phi|\psi_{{k_0},j}\rangle_{L^2   (\mathbb{R}^n)}$$
$\\$$\\$
 By Hölder's inequality,
$$|\langle i^*(g'(U_{\epsilon,1})(Q(P_1)-Q_{\epsilon} (x))\phi|\psi_{k_0,j}\rangle_{D^{1,2}   (\mathbb{R}^n)} |\leq \|\phi\|_X\|g'(U_{P_1})(Q({P_1})-Q(\epsilon x+{P_1}))\psi_{k_0,j}\|_{\frac{2n}{n+2}}.$$

But,
$$\left|\left|g'(U_{P_1})(Q(P_1)-Q(\epsilon x+P_1))\psi_{k_0,j}\right|\right|_{\frac{2n}{n+2}}\leq \epsilon Q(P_1)^a\left|\left|g'(U_{P_1})|x|\frac{\partial U_{P_1}}{\partial x_j}\right|\right|_{\frac{2n}{n+2}}.$$
Then, 
$$\langle i^*(g'(U_{P_1})(Q(P_1)-Q_{\epsilon})\phi|\psi_{k_0,j}\rangle_{D^{1,2}(\mathbb{R}^n)}  
=o(\epsilon).$$
$\\$$\\$
\underline{Control of 
$\langle i^*((g'(U_{\epsilon,1})-g'(S_{\epsilon,P}))Q_{\epsilon}\phi|\psi_{{k_0},j}\rangle$}:

$$\\$$

Set $\beta = \frac{2n}{n+2}$ and $r = \frac{d(E_1,E_2)}{2\epsilon}$. 

We have,
$$|\langle i^*((g'(S_{\epsilon,P})-g'(U_{\epsilon,k_0}))Q_{\epsilon}\phi)|\psi_{{k_0},j}\rangle|\leq C\| (g'(S_{\epsilon,P})-g'(U_{\epsilon,k_0}))\psi_{{k_0},j}\|_{\beta}~\|\phi\|_X $$

But,
$$\| (g'(U_{\epsilon,1})-g'(S_{\epsilon,P}))\psi_{1,j}\|_{\beta}^{\beta}$$
$$=\int_{\mathbb{R}^n}\left|\left[ \left(U_{P_1}(x)+U_{P_2}\left(x-\frac{P_2-P_1}{\epsilon}\right)\right)^{q-1}-(U_{P_1}(x))^{q-1}\right]\left|\frac{\partial U_{P_1}}{\partial x_j}(x)\right|\right|^{\beta} dx$$

$$ \leq 
\int_{|x |\leq r}\left|\left[\left(U_{P_1}(x)+U_{P_2}\left(x-\frac{P_2-P_1}{\epsilon}\right)\right)^{q-1}-(U^{q-1}_{P_1}(x))\right]\left|\frac{\partial U_{P_1}}{\partial x_j}(x)\right|\right|^{\beta}dx $$ $$+ 
\int_{|x |>r}\left|\left[\left(U_{P_1}(x)+U_{P_2}\left(x-\frac{P_2-P_1}{\epsilon}\right)\right)^{q-1}-(U^{q-1}_{P_1}(x))\right]\left|\frac{\partial U_{P_1}}{\partial x_j}(x)\right|\right|^{\beta}dx $$
$$ \leq 
M\left(\int_{|x|> r}\left|\frac{\partial U_{P_1}}{\partial x_j}(x)\right|^{\beta}dx + 
\int_{|x |\leq r}U_{P_2}\left(x-\frac{P_2-P_1}{\epsilon}\right)^{(q-1)\beta}\left|\frac{\partial U_{P_1}}{\partial x_j}(x)\right|^{\beta}dx\right) $$$$\leq \frac{C}{r^{(n-1)\beta-n}}+C'U^{(q-1)\beta}(r)=o(\epsilon^{\beta})$$
 (We use that $U$ is bounded and is radially decreasing). Thus,
$$\| (g'(U_{\epsilon,{k_0}})-g'(S_{\epsilon,P}))\psi_{{k_0},j}\|_{\beta}=o(\epsilon).$$

Likewise, 
$$\langle i^*((f'(S_{\epsilon,P})-f'(U_{\epsilon,k_0}))\phi| \psi_{{k_0},j}\rangle=o(\epsilon).$$
$\\$$\\$
\underline{Control of $\sum\langle i^*(g(U_{\epsilon,k})(Q_{\epsilon}-Q(P_k))|\psi_{{k_0},j}\rangle$}:
$\\$

On the one hand,

$$\langle i^*(g(U_{\epsilon,1})(Q_{\epsilon}-Q(P_1))|\psi_{{k_0},j}\rangle_{D^{1,2}(\mathbb{R}^n)}  = \int U_{P_1}^q(x)\frac{\partial U_{P_1}}{\partial x_j}(x)(Q(\epsilon x+ P_1)-Q(P_1))dx $$

But by an application of the Stokes formula (see \cite{MichelettiPistoia2003}), 
$$\int_{\mathbb{R}^n} U_{P_1}^q(x)\frac{\partial U_{P_1}}{\partial x_j}(x)(Q(\epsilon x+ P_1)-Q(P_1))dx=\frac{1}{q+1}\int_{\mathbb{R}^n}\frac{\partial U^{q+1}_{P_1}}{\partial x_j}(x)(Q(\epsilon x+ P_1)-Q(P_1))dx$$$$=-\epsilon \int_{\mathbb{R}^n} \frac{\partial Q(\epsilon x+P_1)}{\partial x_j}U^{q+1}_{P_1}(x)\frac{1}{q+1}dx=-\epsilon\frac{\partial Q}{\partial x_j}(P_1)Q(P_1)^{(q+1)a-bn} \int_{\mathbb{R}^n}U^{q+1}\frac{1}{q+1} + o(\epsilon)$$

because $U_{P}(x) = Q(P)^{a}U\left(Q(P)^bx\right)$, $\|\nabla^2Q\| < +\infty$ and $U^{q+1}|x| \in L^1$. We set $l=(q+1)a-nb$.
Thus,

$$\langle i^*(g(U_{\epsilon,1})(Q_{\epsilon}-Q(P_1))|\psi_{1,j}\rangle_{D^{1,2}(\mathbb{R}^n)}  =-\frac{\partial Q}{\partial x_j}(P_1)Q(P_1)^l C\epsilon + o(\epsilon).$$

On the other hand
$$|\langle i^*(g(U_{\epsilon,2})(Q_{\epsilon}-Q(P_2))|\psi_{1,j}\rangle|\leq \epsilon \int_{\mathbb{R}^n}U^q_{P_2}(x)|x|\left|\frac{\partial U_{P_1}}{\partial x_j}\left(x-\frac{P_1-P_2}{\epsilon}\right)\right|dx $$
$$=\epsilon \int_{\mathbb{R}^n}U^q_{P_2}(x)|x|\left|\frac{\partial U_{P_1}}{\partial x_j}\left(\frac{P_1-P_2}{\epsilon}-x\right)\right|dx$$
(because $\psi_{1,j}$ is odd).

But,

$$|\psi_{1,j}(r)| \leq C Q(P_1)^{a+b}\frac{1}{1+(Q(P_1)^br)^{n-1}} $$
Thus, 
$$\left|\psi_{1,j}\left(\frac{P_1-P_2}{\epsilon}-x\right)\right| \leq C \frac{1}{1+C_2|\frac{P_1-P_2}{\epsilon}-x|^{n-1}} $$
$$C\int_{\mathbb{R}^n}U^q_{P_2}(x)|x|\left|\frac{\partial U_{P_1}}{\partial x_j}\left(x-\frac{P_1-P_2}{\epsilon}\right)\right|dx \leq \int_{\mathbb{R}^n}U^q(C'x)|x| \frac{1}{1+C_2|\frac{P_1-P_2}{\epsilon}-x|^{{n-1}}} dx $$
$$= C\left[(U(C'.)|.|)*(\frac{1}{1+C_2|.|^{n-1}})\right]\left(\frac{P_1-P_2}{\epsilon}\right)=o(1).$$

By a standard property of convolution because $L^2*L^2 \subset C_{0}(\mathbb{R}^n)$.

We conclude that $$\left(\frac{G_{\epsilon}(P)}{Q(P_k)^l\epsilon}\right)_{k,j}$$ converges uniformly to $C\nabla \tilde{Q}$  where $C$ and $l$ are constant.

\end{proof}

To conclude we need this classical result on the Brouwer degree ( see for example \cite{AmbrosettiMalchiodi2007}). 

\begin{lemma}\label{Brouwer2}

Consider $F_{\epsilon}:\mathbb{R}^d \rightarrow \mathbb{R}^d$ continuous with $\epsilon >0$. Consider $F:\mathbb{R}^d\rightarrow\mathbb{R}^d$ continuous and a bounded open $O$ of $\mathbb{R}^d$ such that $0 \in F(O)$ but $0 \notin F(\partial O)$ and $deg(F,O,0) \ne 0$. Suppose that 
$$\|F_{\epsilon}-F\|_{L^\infty(O)}\rightarrow0.$$
Then there exists $\epsilon_1$ such that $deg(F_{\epsilon},O,0)\ne0$ for all $0<\epsilon<\epsilon_1$.

\end{lemma}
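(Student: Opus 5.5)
The plan is to prove Lemma~\ref{Brouwer2} from the homotopy invariance of the Brouwer degree, applied to the straight-line homotopy between $F$ and $F_\epsilon$.

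First I will fix the positive constant
\[
\delta := \operatorname{dist}\bigl(0,F(\partial O)\bigr) = \min_{y\in\partial O}|F(y)| .
\]
Since $O$ is bounded, $\partial O$ is compact, and $F$ is continuous, so this minimum is attained. It is strictly positive because $0\notin F(\partial O)$. By the uniform convergence hypothesis, I can choose $\epsilon_1>0$ such that $\|F_\epsilon-F\|_{L^\infty(O)}<\delta$ for all $0<\epsilon<\epsilon_1$. Since $F_\epsilon$ and $F$ are continuous, the same bound extends to $\overline O$, and in particular holds on $\partial O$. One small point needs care: the sup in the hypothesis is over $O$, and continuity is what lets me pass to the closure. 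If one prefers, one can simply read the hypothesis as a sup over $\overline O$, which is the setting where the lemma is used.

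Next, for $t\in[0,1]$ I will define
\[
H(t,y) = (1-t)F(y) + tF_\epsilon(y).
\]
This map is continuous on $[0,1]\times\overline O$. For $y\in\partial O$ we have
\[
|H(t,y)| \ge |F(y)| - t|F_\epsilon(y)-F(y)| > \delta-\delta = 0 .
\]
Hence $0\notin H(t,\partial O)$ for every $t\in[0,1]$. This is exactly the admissibility condition needed for homotopy invariance of the degree.

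Homotopy invariance then gives
\[
\deg(F_\epsilon,O,0) = \deg(H(1,\cdot),O,0) = \deg(H(0,\cdot),O,0) = \deg(F,O,0) \neq 0
\]
for all $0<\epsilon<\epsilon_1$. As a consequence of the solution property of the degree, $F_\epsilon$ has a zero in $O$.

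There is no real obstacle here; the argument is standard. The only points needing attention are the strict positivity of $\delta$, which uses the compactness of $\partial O$, and the passage of the sup bound from $O$ to $\partial O$.

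In the application, $F$ will be $C\nabla\tilde Q$ on $O=B(\xi_1,R_1)\times B(\xi_2,R_2)$. Its only zero there is $(\xi_1,\xi_2)$, which is nondegenerate, so the degree equals $\operatorname{sign}\det\nabla^2\tilde Q(\xi_1,\xi_2)\neq0$.
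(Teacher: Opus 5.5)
Your proof is correct: choosing $\epsilon_1$ so that $\sup_{\overline O}|F_\epsilon-F|<\min_{\partial O}|F|$ makes the straight-line homotopy admissible, and homotopy invariance then gives $\deg(F_\epsilon,O,0)=\deg(F,O,0)\neq0$. The paper gives no proof of its own and simply cites this as a classical property of the Brouwer degree (Ambrosetti--Malchiodi); your argument is the standard proof of that property, so the two approaches coincide.
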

\begin{lemma}\label{solution}
Consider also $F:\mathbb{R}^d\rightarrow\mathbb{R}^d$ continuous and a bounded open $O$ such that $0 \in F(O)$ and $0 \notin F(\partial O)$. If $deg(F,O,0)\ne 0$. Then there exists $z \in O$ such that $F(z)=0$.
\end{lemma}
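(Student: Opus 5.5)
This is a basic property of Brouwer degree, and I would prove it directly from the defining properties of the degree. I am allowed to use its existence, normalization, additivity and homotopy invariance; here I mainly need the \emph{solution property}, which is essentially part of the axiomatic characterization, and the excision property. The plan is to argue by contradiction.

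Suppose that $F(z)\neq 0$ for every $z\in O$. Since $0\notin F(\partial O)$ as well, we get $0\notin F(\overline{O})$. Now apply excision with the empty compact set. Excision says that $\deg(F,O,0)=\deg(F,O',0)$ for any open $O'\subset O$ such that $0\notin F(\overline{O}\setminus O')$. Taking $O'=\emptyset$ gives
\[
\deg(F,O,0)=\deg(F,\emptyset,0)=0,
\]
because the degree over the empty set vanishes by additivity. This contradicts the hypothesis $\deg(F,O,0)\neq0$. Hence some $z\in O$ satisfies $F(z)=0$.

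If one wants to avoid invoking excision abstractly, there is an alternative route through the construction of the degree. Since $\overline O$ is compact and $F$ is continuous, $\delta:=\min_{\overline O}|F|>0$. Choose a smooth $G$ with $\|G-F\|_{L^\infty(\overline O)}<\delta/2$. By homotopy invariance, using the linear homotopy $tF+(1-t)G$, which never vanishes on $\partial O$, we have $\deg(G,O,0)=\deg(F,O,0)$. But $G$ has no zeros in $\overline O$, so $0$ is trivially a regular value. The degree, defined as $\sum_{G(x)=0}\operatorname{sign}\det DG(x)$, is then an empty sum and equals $0$, again a contradiction.

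The only point needing care is the uniform approximation step in this second route: the homotopy must stay away from $0$ on $\partial O$. This is guaranteed by $|tF+(1-t)G|\ge |F|-|G-F|>\delta/2$. Beyond that, the statement is a standard textbook fact, as in \cite{AmbrosettiMalchiodi2007}, so I would simply cite it in the paper.
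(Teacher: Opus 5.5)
Your proof is correct, and both routes work. In the first, excision with $O'=\emptyset$ is legitimate once $0\notin F(\overline O)$, and it gives $\deg(F,O,0)=\deg(F,\emptyset,0)=0$. In the second, the bound $|tF+(1-t)G|\ge |F|-(1-t)|G-F|>\delta/2$ in fact holds on all of $\overline O$, so the homotopy is admissible and $\deg(G,O,0)$ is an empty sum.

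The paper gives no proof and only refers to \cite{AmbrosettiMalchiodi2007}. Note, however, that the lemma as literally stated is trivial. The hypothesis $0\in F(O)$ already says that $F(z)=0$ for some $z\in O$, so your contradiction assumption ``$F(z)\neq0$ for every $z\in O$'' is refuted by the hypotheses before any degree theory is used.

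What your argument actually establishes is the stronger, non-trivial solution property, in which $0\in F(O)$ is dropped: $0\notin F(\partial O)$ and $\deg(F,O,0)\neq0$ imply that $F$ vanishes somewhere in $O$. This is the version the paper really needs. When the lemma is applied to $F_\epsilon$ on $O_m$ in the bifurcation step, one only knows $\deg(F_\epsilon,O_m,0)\neq0$ from Lemma \ref{Brouwer2}. The hypothesis $0\in F_\epsilon(O_m)$ could not be checked there without already having the conclusion.

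So your approach gives the meaningful statement, while the paper's formulation, taken literally, would be circular in its application. It would be worth stating explicitly that you are proving the version without the hypothesis $0\in F(O)$.
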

To conclude we need to use some classical results on the Brouwer degree. 
Set $O_m=B(\xi_1,\frac{R_1}{m})\times B(\xi_2,\frac{R_2}{m})$. By definition of $R_1$ and $R_2$ and because $\xi_1$, $\xi_2$ are nondegenerate points, $deg(\nabla\tilde{Q},O_m,0)\ne 0$ and $0 \notin \nabla \tilde{Q}(\partial O_m)$. In addition, the result \ref{Brouwer 2} show that 
$$\|F_{\epsilon}-\nabla\tilde{Q}\|_{L^\infty(O_m)}\rightarrow0$$
 Thanks to Lemma \ref{Brouwer2}, we can find $\epsilon_m$ such that $deg(F_\epsilon,O_m,0)\ne 0$ for all $\epsilon <\epsilon_m$. Using Lemma \ref{solution} we can thus build a family $(P_{\epsilon})\in (\mathbb{R}^{2n})^{(0,\epsilon_0)}$ such that
 
 \begin{itemize}
     \item for all $k \in \{1,2\}$, $P_{\epsilon,k}\rightarrow \xi_k$.
     \item $G_{\epsilon}(P_{\epsilon})=0$. 
 \end{itemize}

\begin{proposition}\label{end unbounded}

If $ W_{\epsilon}:=\sum U_{\epsilon,P_{\epsilon,k}}+\phi_{\epsilon,P_{\epsilon}}$, then 
$$W_{\epsilon}=i^*(f(W_\epsilon)-Q_{\epsilon}g(W_{\epsilon})).$$

\end{proposition}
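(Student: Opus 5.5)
Set $R_\epsilon:=W_\epsilon-i^*(f(W_\epsilon)-Q_\epsilon g(W_\epsilon))\in X$ with $P=P_\epsilon$. The plan is to show first that $R_\epsilon$ lies in $K_{\epsilon,P_\epsilon}$, and then that the bifurcation equation forces it to vanish.

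\emph{Step 1: $R_\epsilon\in K_{\epsilon,P_\epsilon}$.} By construction $\phi_{\epsilon,P_\epsilon}$ is a fixed point of $T_{\epsilon,P_\epsilon}$. Unwinding the definition of $T_{\epsilon,P}$ gives
\[
\tilde L_{\epsilon,P}\phi=\Pi_{\epsilon,P}\bigl(N_f+N_g+I_f+I_g+I_{f'}+I_{g'}+\cdots\bigr).
\]
Using the decomposition of Subsection 3.3.1, this is the same as $\Pi_{\epsilon,P}R_\epsilon=0$. Hence $R_\epsilon=\sum_{k,j}c_{k,j}\psi_{k,j}$ for some real coefficients $c_{k,j}$, where $\psi_{k,j}=\partial U_{\epsilon,k}/\partial x_j$.

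\emph{Step 2: the coefficients vanish.} The point $P_\epsilon$ was chosen so that $G_{\epsilon,k,j}(P_\epsilon)=0$ for all $k,j$, that is, $\langle R_\epsilon|\psi_{k,j}\rangle=0$ for all $k,j$. Substituting the expansion from Step 1 gives $G_{\epsilon,P_\epsilon}c=0$, where $G_{\epsilon,P}$ is the Gram matrix of the family $(\psi_{k,j})$.

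\emph{Step 3: invertibility of the Gram matrix.} This is the main point to verify, and I would argue as in Lemma \ref{Projection prop}. Inside each block $k$, the vectors $\psi_{k,1},\dots,\psi_{k,n}$ are orthogonal, since $U$ is radial, and their norms are bounded below uniformly for $P\in E$. The cross-block entries $\langle\psi_{1,j},\psi_{2,i}\rangle$ tend to $0$ uniformly on $E$, because $|P_1-P_2|/\epsilon\to\infty$ and $\nabla U_P\in D^{1,2}$. So $G_{\epsilon,P}$ is a small perturbation of an invertible diagonal matrix, and it is invertible for $\epsilon$ small. This gives $c=0$, hence $R_\epsilon=0$, which is the claimed identity.

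\emph{Remaining check.} One subtlety is that the statement uses $P_\epsilon\in O_m\subset E$, where $E$ is the compact set on which the contraction was carried out. I would therefore confirm that $\epsilon$ is below every threshold used: those of Corollary \ref{inversibility}, Proposition \ref{Stability2}, and the Gram invertibility. Since all of these are uniform on $E$, this is immediate.
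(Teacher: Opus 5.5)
Your proof is correct and follows the paper's argument: the fixed-point property gives $\Pi_{\epsilon,P_\epsilon}R_\epsilon=0$, i.e.\ $R_\epsilon\in K_{\epsilon,P_\epsilon}$, and $G_{\epsilon}(P_\epsilon)=0$ gives $R_\epsilon\in K_{\epsilon,P_\epsilon}^\perp$. The only difference is that your Step 3 is not needed (so it is not the main point): the paper simply uses $K_{\epsilon,P_\epsilon}\cap K_{\epsilon,P_\epsilon}^\perp=\{0\}$, i.e.\ $\|R_\epsilon\|_{D^{1,2}}^2=\sum_{k,j}c_{k,j}\langle R_\epsilon|\psi_{k,j}\rangle=0$, which requires neither invertibility of the Gram matrix nor any smallness of $\epsilon$.
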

 
\begin{proof}
By definition of $\phi$, $T_{\epsilon,P_{\epsilon}}\phi = \phi$. Thus, 

$$W_{\epsilon}-i^*(f(W_\epsilon)-Q_{\epsilon}g(W_{\epsilon}))\in K_{\epsilon,P_{\epsilon}}$$
But $G_{\epsilon}(P_{\epsilon})=0$. Thus, 
$$W_{\epsilon}-i^*(f(W_\epsilon)-Q_{\epsilon}g(W_{\epsilon}))\in K_{\epsilon,P_{\epsilon}}^\perp$$

We conclude easily because $K_{\epsilon,P_{\epsilon}}^\perp\cap K_{\epsilon,P_{\epsilon}} =\{0\}.$
\end{proof}

\subsection{Properties of the solution}
Now, we have a weak solution $W$ of the problem \eqref{Alternative} and want to show some properties of regularity of this solution. 
\begin{lemma}\label{regularity2}
For all $0<\epsilon<\epsilon_0$, $W_{\epsilon,P_{\epsilon}}\in C^2(\mathbb{R}^n)$ and $W_{\epsilon,P_{\epsilon}}$ is a strong solution of
$$-\Delta u = u^p-Q_{\epsilon}u^q.$$
\end{lemma}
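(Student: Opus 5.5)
We prove Lemma~\ref{regularity2} by elliptic bootstrap. The starting point is the weak formulation from Proposition~\ref{end unbounded}: $W:=W_{\epsilon,P_\epsilon}\in X$ satisfies
\[
\int_{\mathbb{R}^n}\nabla W\cdot\nabla\varphi=\int_{\mathbb{R}^n}\bigl(f(W)-Q_\epsilon g(W)\bigr)\varphi \qquad \forall \varphi\in D^{1,2}(\mathbb{R}^n).
\]
Set $h:=f(W)-Q_\epsilon g(W)=\bigl((W^+)^{p-1}-Q_\epsilon (W^+)^{q-1}\bigr)W=:V(x)W$. The strategy is to show first that $W\in L^\infty_{loc}$, then that $W\in W^{2,r}_{loc}$ for all $r<\infty$, and finally that $W\in C^{2,\gamma}_{loc}$. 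This gives a classical, hence strong, solution of $-\Delta W=f(W)-Q_\epsilon g(W)$. Positivity is left for the next step, so we keep $f,g$ with positive parts here.

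\textbf{Local boundedness.} Since $W\in L^{2^*}$ and $p-1\le \frac{4}{n-2}$, we have $|V|\le C(|W|^{p-1}+|W|^{q-1})\in L^{n/2}_{loc}$, because $(p-1)\frac n2\le 2^*$ and $(q-1)\frac n2\le 2^*$. We are therefore in the critical Brezis--Kato situation: $-\Delta W=VW$ with $V\in L^{n/2}_{loc}$. The Brezis--Kato lemma, proved by Moser iteration with test functions $\eta^2 W\min(|W|,L)^{2\tau}$ and the splitting $V=V\mathbf 1_{|V|>M}+V\mathbf 1_{|V|\le M}$, gives $W\in L^r_{loc}$ for every $r<\infty$. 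Since $p<\frac{n+2}{n-2}$ is strictly subcritical, a simpler route also works: iterate Calder\'on--Zygmund. If $W\in L^{r}_{loc}$, then $h\in L^{r/p}_{loc}$, so $W\in W^{2,r/p}_{loc}\subset L^{r'}_{loc}$ with $\frac1{r'}=\frac pr-\frac 2n$. Starting from $r=2^*$, we have $\frac p{2^*}-\frac2n<\frac1{2^*}$ exactly because $p<\frac{n+2}{n-2}$, and the gain $\frac1r-\frac1{r'}$ increases at each step, so after finitely many steps $r'>n/2\cdot p$ and then $W\in L^\infty_{loc}$.

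We expect the main obstacle to be the first step of this iteration. It requires $W$ to be a distributional solution to which $W^{2,r}$ interior estimates apply. To arrange this, localize by writing $\eta W$ with a cutoff $\eta$, which satisfies $-\Delta(\eta W)=\eta h-2\nabla\eta\cdot\nabla W-W\Delta\eta$. The extra terms lie in $L^2$ and $L^{2^*}$, which limits the exponent at each stage, but this is harmless since the bootstrap can be run on nested balls.

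\textbf{H\"older regularity.} With $W\in L^\infty_{loc}$, we get $h\in L^\infty_{loc}$, hence $W\in W^{2,r}_{loc}$ for all $r$, hence $W\in C^{1,\gamma}_{loc}$. Now $t\mapsto (t^+)^p$ and $t\mapsto (t^+)^q$ are $C^{1}$ (as $p,q>1$) and locally Lipschitz, and $Q_\epsilon\in C^2$, so $h\in C^{0,\gamma'}_{loc}$ with, for example, $\gamma'=\min(\gamma,q-1)$. Schauder interior estimates then yield $W\in C^{2,\gamma'}_{loc}(\mathbb{R}^n)\subset C^2(\mathbb{R}^n)$, and $-\Delta W=f(W)-Q_\epsilon g(W)$ holds pointwise.

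For the equation with $u^p-Q_\epsilon u^q$ as stated, positivity is needed. We plan to obtain it by testing the weak equation with $W^-\in D^{1,2}$: this gives $\int|\nabla W^-|^2=\int (f(W)-Q_\epsilon g(W))W^-=0$, since $f(W)$ and $g(W)$ vanish where $W<0$. Hence $W\ge0$. The strong maximum principle, applied to $-\Delta W+Q_\epsilon W^{q-1}W\ge 0$ with a locally bounded coefficient, then gives $W>0$, because $W\not\equiv0$ as $\|\phi\|_X\le N\epsilon$.
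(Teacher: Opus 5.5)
Your proposal is correct and follows essentially the same route as the paper. Both arguments run an $L^r$ bootstrap driven by the strict subcriticality $p<\frac{n+2}{n-2}$ until local boundedness or H\"older continuity, then use Schauder theory to reach $C^2$. The differences are only technical: the paper localizes by solving Dirichlet problems on shrinking balls, where $w_m-W$ is harmonic and hence smooth by Weyl's lemma, which avoids the commutator terms $\nabla\eta\cdot\nabla W$ and $W\Delta\eta$ you must track with cutoffs; you, in turn, also check $W\ge 0$, which the paper defers to the next proposition, so that $f(W)-Q_\epsilon g(W)$ really equals $W^p-Q_\epsilon W^q$ as stated.
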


\begin{proof}
We adapt the proof of Theorem 1.16 of \cite{AmbrosettiMalchiodi2007} in our case. Consider $\epsilon_0>\epsilon >0$, $x_0 \in \mathbb{R}^n$ and $R>0$. We want to show that $W_{\epsilon} \in C^2(B(x_0,r))$ for $r$ small enough. Fix $\mu>1$ such that $p\mu=\frac{n+2}{n-2}$. Set $B_m=B(x_0,\frac{R}{2^m})$. Consider the unique solution $w_m$ of the following problem  
\begin{equation*}
 \begin{cases} 
  -\Delta u=W^p-Q_{\epsilon}W^q&\text{in} \quad B_m, \\
        
        \quad u(x) = 0 &\text{on} \quad  \partial B_m.     
\end{cases}
\end{equation*} 

Set $\beta_0 =\frac{2*}{p}$. Since $W\in X$, then $W^p - Q_{\epsilon}W^q\in L^{\beta_0}(B(x_0,R))$. Then thanks to the Theorem 1.10 in \cite{AmbrosettiMalchiodi2007}, we conclude that $w_0\in L^{p_1}(B(x_0,R))$  with $p_1=\frac{n\beta_0}{n-2\beta_0}> \mu\frac{2n}{n-2}$. Set $\beta_1 =\frac{p_1}{p}$. If $2\beta_1>n$ we can stop the process. Else by definition $w_0-W$ is harmonic in $B(x_0,R)$. Thus by the Weyl's lemma, $w_0-W\in C^{\infty}(B(x_0,R))$. We conclude that $W\in L^{p_1}(\bar{B_1})$. Then by the same reasoning with $w_1$, we deduce that $W\in L^{p_2}(\bar{B_2})$ with $p_2=\frac{n\beta_1}{n-2\beta_1}> \mu p_1$. Repeat the process until $2\beta_m> n$. Then $-\Delta w_m\in L^{\beta_m}(B_m)$, then $w_m \in H^{\beta_m}$. By regularity because $2\beta_m>n$,$ w_m \in C^{0,l}(B_m)$. Then by the Weyl's lemma, $W\in C^{0,l}(\bar{B}_{m+1})$. Then by Theorem 1.10 of \cite{AmbrosettiMalchiodi2007}, $w_{m+1} \in C^2(B_{m+1})$. We conclude by the Weyl's lemma.  
\end{proof}

Now we have a strong solution $W_{\epsilon}$ of \eqref{Alternative}. We want to show that this solution is positive to answer to the initial problem.

\begin{proposition}\label{positivity}
    If $\epsilon >0$ is small enough, for all $0<\epsilon <\epsilon_0$, $W_{\epsilon,P_{\epsilon}}>0$.
    
\end{proposition}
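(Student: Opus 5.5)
The plan is to prove positivity in two stages: first $W_\epsilon\ge0$ by testing the equation against the negative part, then strict positivity by the strong maximum principle. By Lemma \ref{regularity2}, $W:=W_{\epsilon,P_\epsilon}$ is a $C^2$ strong solution of $-\Delta W=f(W)-Q_\epsilon g(W)$ with $f(x)=(x^+)^p$ and $g(x)=(x^+)^q$, and $W\in X\subset D^{1,2}(\mathbb{R}^n)$. The negative part $W^-=\max(-W,0)$ belongs to $D^{1,2}(\mathbb{R}^n)$ and $\nabla W^-=-\nabla W\,1_{\{W<0\}}$. Since $W=i^*(f(W)-Q_\epsilon g(W))$ (Proposition \ref{end unbounded}), the definition of $i^*$ with test function $\phi=W^-$ gives
\[
-\int_{\mathbb{R}^n}|\nabla W^-|^2=\langle W,W^-\rangle=\int_{\mathbb{R}^n}\bigl(f(W)-Q_\epsilon g(W)\bigr)W^- .
\]
On $\{W<0\}$ we have $f(W)=g(W)=0$, and $W^-=0$ elsewhere, so the right-hand side vanishes. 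Hence $\|W^-\|_{D^{1,2}}=0$, and since $W^-\in L^{2^*}$ this forces $W^-=0$, i.e.\ $W\ge0$. This step uses no smallness of $\epsilon$ at all; this is exactly why the positive parts were introduced in \eqref{Alternative}.

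The second step is to show $W\not\equiv0$. Since $\|\phi_{\epsilon,P_\epsilon}\|_X\le N\epsilon$, we get
\[
\|W\|_{D^{1,2}}\ge \|U_{\epsilon,P_{\epsilon,1}}+U_{\epsilon,P_{\epsilon,2}}\|_{D^{1,2}}-N\epsilon .
\]
The first term is bounded below by roughly $\|U_{P_{\epsilon,1}}\|_{D^{1,2}}$, up to the small interaction coming from the separation of the peaks. That quantity is uniformly positive, because $Q\ge\nu>0$ on $E$ and $U\not\equiv0$. So $W\not\equiv0$ for $\epsilon$ small, and this is where the smallness hypothesis enters.

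The third step, which I expect to be the only delicate point, is strict positivity. We have $W\ge0$, $W\in C^2$, and
\[
-\Delta W+c(x)W=W^p\ge0, \qquad c(x):=Q_\epsilon(x)W^{q-1}(x)\ge0 .
\]
The coefficient $c$ is continuous (since $q>1$) and locally bounded. The strong maximum principle for supersolutions with nonnegative bounded coefficient, applied on balls, says that if $W(x_0)=0$ at some $x_0$ then $W\equiv0$ on a neighbourhood of $x_0$. The zero set is therefore open; it is also closed, so by connectedness of $\mathbb{R}^n$ it is either empty or everything, and the latter contradicts the second step. Hence $W>0$ everywhere.

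I do not expect any real obstacle. The only care needed is to check that $W^-$ is an admissible test function in $D^{1,2}(\mathbb{R}^n)$ (standard truncation, Stampacchia) and that $c$ is locally bounded so the Hopf/strong maximum principle applies. One could alternatively use the Harnack inequality for $-\Delta W+cW\ge0$.
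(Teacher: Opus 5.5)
Your proposal is correct and follows the paper's proof essentially step by step. You test the fixed-point equation against $W^-$ to get $W\ge 0$, use $\|\phi_{\epsilon,P_\epsilon}\|_X\le N\epsilon$ to ensure $W\not\equiv 0$ for small $\epsilon$, and then apply the strong maximum principle to $-\Delta W+Q_\epsilon W^{q-1}W=W^p\ge 0$. Your explicit identity $\langle W,W^-\rangle=-\|W^-\|_{D^{1,2}}^2$ and the open--closed connectedness argument make precise what the paper leaves implicit.
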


\begin{proof}
Firstly we want to show that $W\geq 0$.
We have
$$W=i^*(f(W)-Q_{\epsilon}g(W))$$
Then, testing this equality against $W^-=-min(W,0)$, we have by definition of $f$ and $g$,
$$\langle W|W^- \rangle =0 $$
Thus $$\langle W^+|W^- \rangle-\langle W^-|W^- \rangle =0$$
$$ \langle W^-|W^- \rangle =0$$
Then $W^-=0$.
That concludes.

In addition, since $\|W\|_X \geq \|\sum U_{\epsilon,k}\|_X-\|\phi\|_X$ we can suppose that $\epsilon_0$ is chosen so that for all $\epsilon<\epsilon_0$, $W\ne0$. Note that $W$ satisfies $$-\Delta W +Q_{\epsilon}W^{q-1}W=W^p \geq 0$$

Then $$-\Delta W +cW \geq 0$$
with $c(x)=Q_{\epsilon}(x)W^{q-1}(x)\geq 0$ and $c \in L^\infty_{loc}(\mathbb{R}^n)$ thanks to \ref{regularity2}. Then by the strong maximum principle (see for example \cite{Evans2010}) because $W\ne 0$, $W(x)>0$. 

\end{proof}

Now we set $(u_{\epsilon})=W_{\epsilon}(\frac{x}{\epsilon})$. We want to show that $(u_\epsilon)$ concentrates. 
\begin{proposition}\label{regularity}
For all $0<\epsilon <\epsilon_0$, $u_{\epsilon}(x) \rightarrow0$ when $|x|\rightarrow +\infty$. 
In addition, $u_{\epsilon}$ concentrates at $(P_{\epsilon})$.

\end{proposition}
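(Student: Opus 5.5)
We have to prove two things. First, $u_\epsilon(x)\to0$ as $|x|\to\infty$ for each fixed $\epsilon$. Second, concentration in the sense of Theorem~\ref{MainWholeSpace}: $u_\epsilon(P_{\epsilon,k})\to U_{\xi_k}(0)$, and $u_\epsilon$ is uniformly small at distance $\geq R\epsilon$ from the peaks once $R$ is large. Since $u_\epsilon(x)=W_\epsilon(x/\epsilon)$, both statements reduce to pointwise control of $W_\epsilon=S_{\epsilon,P_\epsilon}+\phi_\epsilon$ in the rescaled variable. The profile part is explicit: $U_{\epsilon,k}(y)=U_{P_{\epsilon,k}}(y-P_{\epsilon,k}/\epsilon)\leq C(1+|y-P_{\epsilon,k}/\epsilon|)^{2-n}$ with $C$ uniform for $P\in E$. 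So everything rests on an $L^\infty$ bound for $\phi_\epsilon$ that tends to $0$, together with local decay of $\phi_\epsilon$ at infinity for fixed $\epsilon$.

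The tool I plan to use is a local Moser/De Giorgi type estimate on unit balls. Subtracting the equations satisfied by $W_\epsilon$ and by the $U_{\epsilon,k}$ gives
\[
-\Delta\phi_\epsilon = f(W_\epsilon)-\sum_k f(U_{\epsilon,k}) - Q_\epsilon g(W_\epsilon)+\sum_k Q(P_{\epsilon,k})g(U_{\epsilon,k}).
\]
Write the right-hand side as $c_\epsilon(y)\phi_\epsilon+h_\epsilon(y)$. Here $|c_\epsilon|\leq C(W_\epsilon^{p-1}+S^{p-1}+W_\epsilon^{q-1}+S^{q-1})$. The term $h_\epsilon$ collects the interaction pieces $f(S)-\sum f(U_{\epsilon,k})$ and $g(S)-\sum g(U_{\epsilon,k})$ together with $(Q(P_{\epsilon,k})-Q_\epsilon)g(U_{\epsilon,k})$.

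By the regularity iteration in Lemma~\ref{regularity2}, which only uses $W\in X$ with uniform bounds, $c_\epsilon$ is bounded in $L^{r}(B(y_0,2))$ for some $r>n/2$, uniformly in $y_0$ and $\epsilon$. The same iteration started from $\|\phi_\epsilon\|_X\leq N\epsilon$ then yields
\[
\|\phi_\epsilon\|_{L^\infty(B(y_0,1))}\leq C\bigl(\|\phi_\epsilon\|_{L^{2^*}(B(y_0,2))}+\|h_\epsilon\|_{L^{r}(B(y_0,2))}\bigr).
\]
We have $\|\phi_\epsilon\|_{2^*}\leq CN\epsilon$. The interaction part of $h_\epsilon$ is $o(1)$ in $L^\infty$, because on each point one of the two peaks is at distance $\gtrsim\rho/\epsilon$ and contributes $O(\epsilon^{n-2})$. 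The last piece is bounded by $|Q(\epsilon y+P_{\epsilon,k})-Q(P_{\epsilon,k})|\,U^q\leq C\epsilon|y|U^q\leq C\epsilon$. Hence $\|\phi_\epsilon\|_\infty\to0$ uniformly.

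For fixed $\epsilon$, let $|y_0|\to\infty$. The quantities $\|\phi_\epsilon\|_{L^{2^*}(B(y_0,2))}$ and $\|h_\epsilon\|_{L^r(B(y_0,2))}$ tend to $0$, since they are tails of integrable functions and $U^q(y)\to0$. So $\phi_\epsilon(y)\to0$ and $W_\epsilon(y)\to0$, and therefore $u_\epsilon(x)\to0$.

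For concentration, set $P_{\epsilon,k}$ equal to the chosen centres (the points from the degree argument). Then
\[
u_\epsilon(P_{\epsilon,k})=W_\epsilon(P_{\epsilon,k}/\epsilon)=U_{P_{\epsilon,k}}(0)+O(\epsilon^{n-2})+o(1)\to Q(\xi_k)^aU(0)=U_{\xi_k}(0),
\]
using continuity of $Q$ and $P_{\epsilon,k}\to\xi_k$. If $\operatorname{dist}(x,\{P_{\epsilon,k}\})\geq R\epsilon$, then every $|y-P_{\epsilon,k}/\epsilon|\geq R$ with $y=x/\epsilon$. This gives $u_\epsilon(x)\leq CKR^{2-n}+\|\phi_\epsilon\|_\infty$, whose $\limsup_{\epsilon\to0}$ is $CKR^{2-n}\to0$ as $R\to\infty$.

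The main obstacle is the uniform local $L^\infty$ estimate. The coefficient $c_\epsilon$ involves $W_\epsilon^{p-1}$, which a priori is only in $L^{n/2}$-type spaces through $X$, and $n/2$ is the borderline where Moser iteration fails. To get past this, I would first run the bootstrap of Lemma~\ref{regularity2} quantitatively to obtain uniform $L^\infty_{loc}$ bounds on $W_\epsilon$. Its starting integrability gain uses only $p<(n+2)/(n-2)$ and is uniform in $\epsilon$ and $y_0$. Once $c_\epsilon$ is uniformly bounded, the linear estimate for $\phi_\epsilon$ is standard.
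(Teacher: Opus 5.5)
Your argument is correct, but it is organized differently from the paper's.

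\textbf{The paper's route.} The paper applies the local boundedness estimate (Theorem 4.1 of Han--Lin) twice.
\begin{itemize}
\item First it applies it to $W_\epsilon$ itself, viewed as a solution of $-\Delta u+cu=0$ with $c=-W_\epsilon^{p-1}+Q_\epsilon W_\epsilon^{q-1}$. This gives the decay at infinity directly. It also gives the bound $CR^{2-n}+C'\epsilon$ off the peaks, by splitting $\|W_\epsilon\|_{L^{2^*}(B(x_0,1))}$ into profile and $\phi_\epsilon$ parts. Finally it gives a uniform bound $\|\phi_\epsilon\|_\infty\le M$.
\item Then it applies it to $\phi_\epsilon$, with the \emph{bounded} linearized coefficient $-pS^{p-1}+qQ_\epsilon S^{q-1}$. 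The right-hand side now contains the superlinear remainder $W_\epsilon^p-S^p-pS^{p-1}\phi_\epsilon-\dots$. That remainder is controlled through $M$ and $\|\phi_\epsilon\|_X\le N\epsilon$, which yields $\phi_\epsilon(P_{\epsilon,k}/\epsilon)\to0$.
\end{itemize}

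\textbf{Your route.} You absorb all the $\phi_\epsilon$-dependence into a mean-value coefficient $c_\epsilon$. Your $h_\epsilon$ then contains only the interaction and $Q$-oscillation errors, which you bound pointwise by $O(\epsilon^{n-2})+O(\epsilon)$. A single estimate gives $\|\phi_\epsilon\|_\infty\to0$ uniformly, and all three conclusions follow from the explicit profile bound. This is more economical than the paper's argument:
\begin{itemize}
\item you need no a priori bound $M$;
\item you need no estimate of the superlinear remainder;
\item trivial $L^\infty$ bounds on $h_\epsilon$ replace the global $L^\beta$ interaction estimates.
\end{itemize}

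\textbf{The obstacle you flag is not there.} You do not need the quantitative bootstrap of Lemma~\ref{regularity2}. Since $\|W_\epsilon\|_{2^*}\le C$ uniformly in $\epsilon$, you have
\[
\bigl\|W_\epsilon^{p-1}\bigr\|_{L^{2^*/(p-1)}(\mathbb{R}^n)}=\|W_\epsilon\|_{2^*}^{p-1}\le C,
\qquad
\frac{2^*}{p-1}>\frac n2\iff p<\frac{n+2}{n-2}.
\]
Likewise $\|W_\epsilon^{q-1}\|_{L^{2^*/(q-1)}}=\|W_\epsilon\|_{2^*}^{q-1}$ with $2^*/(q-1)>n/2$, and the $S$-terms are bounded. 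So $c_\epsilon$ lies in $L^r(B(y_0,2))$ for some $r>n/2$, uniformly in $y_0$ and $\epsilon$, directly from the $D^{1,2}$ bound. This is exactly the fact the paper invokes in its first application of Han--Lin. Strict subcriticality of $p$ keeps you away from the $n/2$ borderline, so you can apply the local estimate to $\phi_\epsilon$ immediately.
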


\begin{proof}
Using Theorem 4.1 of \cite{HanLin2011}, we know there is a constant $C$ such that for all $x_0 \in \mathbb{R}^n$ and for all $\epsilon$
\begin{equation}\label{controlSup}\tag{$10$}
    \|W_{\epsilon}\|_{L^\infty(B(x_0,\frac{1}{2}))} \leq C\|W_\epsilon\|_{L^r(B(x_0,1))}
\end{equation}
with $r=2^*$ (we use the fact that $W$ is a weak solution of the equation $-\Delta u+cu=0$ where $c=-W^{p-1}+Q_\epsilon W^{q-1}$ and $c$ respects $\|c\|_\beta<C$ with $\beta >\frac{n}{2}$ and $C$ independent of $\epsilon$ and $x_0$).

We deduce easily that $W(x)\rightarrow0$ when $|x|\rightarrow +\infty$ because $\|W\|_{L^r(B(x,1))}\rightarrow0  $ when $|x|\rightarrow +\infty$.

In addition, consider $R>1$. Let $E=B(\frac{P_{\epsilon,1}}{\epsilon},R)\cup B(\frac{P_{\epsilon,2}}{\epsilon},R)$. Fix $x_0 \in \mathbb{R}^n-E$. 
$$\|W_{\epsilon}\|_{L^\infty(B(x_0,\frac{1}{2}))} \leq C\left[\sum \left|\left|U_{P_{\epsilon,k}}\left(x-\frac{P_{\epsilon,k}}{\epsilon}\right)\right|\right|_{L^r(B(x_0,1))}+\left|\left|\phi\right|\right|_{L^r(B(x_0,1))}\right]$$

But, if $x\in B(x_0,1)$ then $|x-\frac{P_{\epsilon,k}}{\epsilon}|\geq R-1$. Thus, 
$$\left|\left|U_{P_{\epsilon,k}}\left(x-\frac{P_{\epsilon,k}}{\epsilon}\right)\right|\right|_r \leq \frac{C}{R^\alpha}$$

Thus, because $\|\phi_{\epsilon}\|\leq N\epsilon$,
$$\|W_{\epsilon}\|_{L^\infty(B(x_0,\frac{1}{2}))}\leq \frac{C}{R^{n-2}}+C'\epsilon $$

Conclusion:
$$sup_{y \notin E}|W_{\epsilon}(y)|\leq \frac{C}{R^{n-2}}+C'\epsilon $$
Then, by rescaling, 
$$sup_{y \notin B(P_{\epsilon,1},R\epsilon)\cup B(P_{\epsilon,2},R\epsilon)}|u_{\epsilon}(y)|\leq \frac{C}{R^{n-2}}+C'\epsilon $$

Furthermore, using Inequality \ref{controlSup}, we know that $\|\phi_\epsilon\|_\infty\leq M$ with $M$ a constant. In addition  we can find a constant $C>0$ such that  $$\|\phi_\epsilon\|_{L^\infty(B(x_0,\frac{1}{2}))}\leq C(\| \phi_{\epsilon}\|_{L^r(B(x_0,1))}+\|h_\epsilon\|_{L^\beta(B(x_0,1))}).$$
with $\beta >\frac{n}{2}$ and 
$$h_\epsilon=\left[S_{\epsilon,P_\epsilon}^p-\sum U_{\epsilon,P_\epsilon}^p+\sum Q(P_{\epsilon,k})U_{\epsilon,P_\epsilon}^q-Q_\epsilon S_{\epsilon,P_\epsilon}^q\right]$$$$+W_{\epsilon}^p-S_{\epsilon}^p-pS_\epsilon^{p-1}\phi-Q_{\epsilon}(W_{\epsilon}^q-S_{\epsilon}^q-qS_\epsilon^{q-1}\phi)$$
Indeed,  $\phi_{\epsilon}$ is weak solution of

$$-\Delta u+cu=h_{\epsilon}$$

with $$c=-pS_{\epsilon,P_\epsilon}^{p-1}+qQ_\epsilon S_{\epsilon,P_\epsilon}^{q-1} $$ 
But, 
$\|S_{\epsilon,P_\epsilon}^p-\sum U_{\epsilon,P_\epsilon}^p+\sum Q(P_{\epsilon,k})U_{\epsilon,P_\epsilon}^q-Q_\epsilon S_{\epsilon,P_\epsilon}^q\|_{L^\beta(B(x_0,1))}\leq \|S_{\epsilon,P_\epsilon}^p-\sum U_{\epsilon,P_\epsilon}^p+\sum Q(P_{\epsilon,k})U_{\epsilon,P_\epsilon}^q-Q_\epsilon S_{\epsilon,P_\epsilon}^q\|_{L^\beta(\mathbb{R}^n)}=O(\epsilon)$ (independent of $x_0$).
In addition, using the same inequalities as before, 
$$\|W_{\epsilon}^p-S_{\epsilon}^p-pS_\epsilon^{p-1}\phi_{\epsilon}-Q_{\epsilon}(W_{\epsilon}^q-S_{\epsilon}^q-qS_\epsilon^{q-1}\phi_{\epsilon})\|_{L^\beta(B(x_0,1))}\leq C\|\phi_{\epsilon}^q\|_{L^\beta(B(x_0,1))}$$

$$\leq C\|\phi_{\epsilon}\|_{L^\infty(B(x_0,1))}\|\phi_{\epsilon}^{q-1}\|_{L^\beta(B(x_0,1))}\leq CM\|\phi_{\epsilon}^{q-1}\|_{L^\beta(B(x_0,1)}\leq C'\|\phi_{\epsilon}\|_X.$$
because $\beta$ can be chosen so that $s\leq (q-1)\beta\leq 2^*$. Because $\|\phi_{\epsilon}\|_X\leq C\epsilon$, we conclude for $\epsilon$ small enough 
$$\|\phi_{\epsilon}\|_{L^\infty(B(x_0,1))}= O(\epsilon)$$
with $O$ independent of $x_0$. Thus,
$sup_{x\in B(P_{\epsilon,k},\frac{\epsilon}{2})}\left|\phi_{\epsilon}\left(\frac{x}{\epsilon}\right)\right|\leq sup_{x\in B(\frac{P_{\epsilon,k}}{\epsilon},\frac{1}{2})}\left|\phi_{\epsilon}\left(x\right)\right|\leq C (\| \phi_{\epsilon}\|_{L^r(B(\frac{P_{\epsilon,k}}{\epsilon} ,1))}+\|h_\epsilon\|_{L^\beta(B(\frac{P_{\epsilon,k}}{\epsilon},1))})\rightarrow 0$. 

We conclude especially $\phi_\epsilon(P_{\epsilon,k_0})\rightarrow0$. In addition, $\sum U_{P_{\epsilon,k}}\left(\frac{P_{\epsilon,k_0}-P_{\epsilon,k}}{\epsilon}\right)\rightarrow U_{\xi_{k_0}}(0)$.

Then, $u_\epsilon(P_{\epsilon,{k_0}})\rightarrow  U_{\xi_{k_0}}(0)>0$.

\end{proof}

Using \ref{regularity}, we conclude the proof of Theorem~\ref{MainWholeSpace}.
\medskip
\begin{remark}
 All this study can be indeed be extend to an arbitrary $K$.
Let $K\ge1$ be fixed and let $\xi_1,\dots,\xi_K$ be distinct nondegenerate
critical points of $Q$. Choose $R_k>0$ such that $\xi_k$ is the only critical
point of $Q$ in $\overline B(\xi_k,R_k)$, such that the balls
$\overline B(\xi_k,R_k)$ are pairwise disjoint (and contained in $\Omega$ in
the bounded case), and set
\[
E:=\prod_{k=1}^{K}\overline B(\xi_k,R_k),
\qquad
\rho:=\min_{k\ne k'}d\big(\overline B(\xi_k,R_k),\overline B(\xi_{k'},R_{k'})\big)>0 .
\]
The whole construction carries over word for word, with $P\in(\mathbb{R}^n)^K$,
$S_{\epsilon,P}=\sum_{k=1}^{K}U_{\epsilon,k}$ and
$K_{\epsilon,P}=\mathrm{Span}\{\partial_jU_{\epsilon,k}:1\le k\le K,\,1\le j\le n\}$,
now of dimension $Kn$. The only points deserving a comment are the following.
\begin{itemize}
\item[(a)] \emph{The Gram matrix.} $G_{\epsilon,P}$ becomes a $Kn\times Kn$ matrix,
made of $K^2$ blocks of size $n\times n$. Its diagonal blocks are
$\big(\langle\partial_jU_{P_k}|\partial_{j'}U_{P_k}\rangle\big)_{j,j'}$, which
converge to $\lambda_k\mathrm{Id}_n$ with $\lambda_k>0$ by radial symmetry;
its off-diagonal blocks tend to $0$ as $\epsilon\to0$, uniformly in $P\in E$,
because $|P_k-P_{k'}|/\epsilon,\ge\rho/\epsilon,\to+\infty$ for $k\ne k'$. Hence $G_{\epsilon,P}$ is invertible with
uniformly bounded inverse --- for which one writes, as in the case $K=2$,
$G_{\epsilon,P}^{-1}=(D_{\epsilon,,P}+G_{\epsilon,P}-D_{\epsilon,P})^{-1}$ with $D_{\epsilon,P}$ the
block-diagonal part --- and Lemma \ref{Projection prop} holds unchanged, with the same proof.
 
\item[(b)] \emph{The interaction estimates.} All the terms measuring the
interaction between distinct peaks ($I_f$, $I_g$, $I_{f'}$, $I_{g'}$, and the
terms $\langle i^*(g(U_{\epsilon,k})(Q_\epsilon-Q(P_k)))|\psi_{k_0,j}\rangle$ with
$k\ne k_0$) are estimated pairwise: for $e\ge1$ the elementary inequality
\[
\Big|\Big(\sum_{k=1}^{K}a_k\Big)^{e}-\sum_{k=1}^{K}a_k^{e}\Big|
\le C(K,e)\sum_{k\ne k'}\big(a_k^{e-1}a_{k'}+a_{k'}^{e}\big),
\qquad a_1,\dots,a_K\ge0,
\]
reduces each of them to a finite sum of at most $K(K-1)$ terms of exactly the
form treated in Sections 3.3.3 and 3.4, with $\rho$ in place of
$d(E_1,E_2)$ and $r=\frac{\rho}{2\epsilon,}$. Since $K$ is fixed, the resulting
bounds --- $o(\epsilon)$ for $I_f,I_g$, $o(1)\|\phi\|_X$ for $I_{f'},I_{g'}$ --- are
unchanged.
 
\item[(c)] \emph{The finite-dimensional reduction.} The map $G_\epsilon,$ now takes
values in $\mathbb{R}^{Kn}$, and Proposition \ref{Brouwer 2} gives that
$F_\epsilon,\big(CG_{\epsilon,k,j}(P)Q(P_k)^{-l}\epsilon^{-1}\big)_{k,j}$ converges
uniformly on $O_m=\prod_{k}B(\xi_k,\frac{R_k}{m})$ to $\nabla\tilde Q$,
where now $\tilde Q(P)=\sum_{k=1}^{K}Q(P_k)$. Lemmas \ref{Brouwer2} and \ref{solution}
then apply as before.
 
\item[(d)] \emph{Section 3.5} is unchanged, with
$E=\bigcup_{k=1}^{K}B\big(\frac{P_{\epsilon,k}}{\epsilon},R\big)$ in the proof of
Proposition \ref{regularity}.
\end{itemize}

\
\end{remark}
\newpage 
\begin{center}
\section{Bounded case}
\end{center}
In this section, we suppose $\Omega$ is a bounded and smooth open set and $q_1<q<p<\frac{n+2}{n-2}$. As before, for clarity, we restrict ourselves to the case $K=2$. We aim to prove the following Theorem
$\\$$\\$

\begin{theorem}\label{Main bounded}
Suppose $q\in(q_1,p)$ and $n\geq9$.
Suppose that $\xi_1,\xi_2\in \Omega$ are $2$ different
non-degenerate critical points of $Q$. Then there exists $\epsilon_0 > 0$ such that, for all $0 < \epsilon < \epsilon_0$,
problem \eqref{Main} admits a strong positive solution $u_{\epsilon}$ which concentrates at $(\xi_k)_{k \in \{1,2\}}$.

\end{theorem}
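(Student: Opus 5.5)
We follow the Lyapunov--Schmidt scheme of Section~3 and indicate only what changes on a bounded domain. After rescaling, $\Omega_\epsilon:=\Omega/\epsilon$ and we work in $X_\epsilon=H^1_0(\Omega_\epsilon)$ with the norm $\|u\|_{X}=\max\{\|\nabla u\|_2,\|u\|_s\}$. Functions of $H^1_0(\Omega_\epsilon)$ are extended by zero, so $X_\epsilon\subset X$ isometrically; hence every Sobolev, Hölder and Hardy--Littlewood--Sobolev estimate of Section~3 holds with constants independent of $\epsilon$. The operator $i^*$ is replaced by $i^*_\epsilon=(-\Delta_{\Omega_\epsilon})^{-1}$ with Dirichlet data. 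Green's function comparison gives $|i^*_\epsilon v|\le i^*|v|$, so Lemma~\ref{control L^s} holds uniformly.

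The whole-space bubbles do not vanish on $\partial\Omega_\epsilon$. We therefore replace them by their projections $\mathcal P U_{\epsilon,k}:=i^*_\epsilon\bigl(f(U_{\epsilon,k})-Q(P_k)g(U_{\epsilon,k})\bigr)$, and write $\mathcal P U_{\epsilon,k}=U_{\epsilon,k}-h_{\epsilon,k}$ with $h_{\epsilon,k}$ harmonic in $\Omega_\epsilon$ and $h_{\epsilon,k}=U_{\epsilon,k}$ on $\partial\Omega_\epsilon$. We take $E=\bar B(\xi_1,R_1)\times\bar B(\xi_2,R_2)\subset\Omega$ with $d(E,\partial\Omega)=:\delta>0$. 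Then the boundary is at distance at least $\delta/\epsilon$ from each center, so $0\le h_{\epsilon,k}\le C(\epsilon/\delta)^{n-2}$ by the maximum principle and $|\nabla h_{\epsilon,k}|\le C\epsilon^{n-1}$ on compact subsets at scale $1/\epsilon$.

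\textbf{Step 1: size of the projection error.} The first step is to estimate $\|h_{\epsilon,k}\|_X$. Energy minimality of the harmonic extension, applied with a cut-off of $U_{\epsilon,k}$ near $\partial\Omega_\epsilon$, gives $\|\nabla h_{\epsilon,k}\|_2\le C\|\nabla U_{\epsilon,k}\|_{L^2(\{|x-P_k/\epsilon|>\delta/2\epsilon\})}+C\epsilon\|U_{\epsilon,k}\|_{L^2(\text{annulus})}=O(\epsilon^{(n-2)/2})$. For the $L^s$ norm we use $h\le C\epsilon^{n-2}$ on a set of volume $|\Omega|\epsilon^{-n}$, which gives $\|h\|_s\le C\epsilon^{n-2-n/s}=o(\epsilon)$ because $s>n/(n-3)$. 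Both bounds are $o(\epsilon)$ since $n\ge9$. The kernel becomes $K_{\epsilon,P}=\mathrm{Span}\{\partial_j\mathcal PU_{\epsilon,k}\}$. The Gram matrix differs from the whole-space one by $o(1)$, so Lemma~\ref{Projection prop} persists. The error terms acquire extra contributions of the form $i^*_\epsilon\bigl(f(\sum\mathcal PU_k)-f(\sum U_k)\bigr)$ and similar; by Lemma~\ref{Control puissance q} and the bounds on $h$ these are $o(\epsilon)$ in $X$.

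\textbf{Step 2: uniform invertibility (main obstacle).} The key step is to redo Proposition~\ref{Uniformity} for $\tilde L_{\epsilon,P}$ on $K_{\epsilon,P}^\perp\subset X_\epsilon$. We argue by contradiction with translated sequences $\tilde\phi_m$ as before. The domains $\Omega_{\epsilon_m}-P_{m,k_0}/\epsilon_m$ exhaust $\mathbb R^n$, so every test function $\eta\in C_c^\infty$ eventually lies in $X_{\epsilon_m}$ and the weak limit satisfies the same limiting equation. Lemma~\ref{non-degeneracy of the solution} then forces $\psi_{k_0}=0$, and the compactness of $\phi\mapsto(f'(U_A)-Q(A)g'(U_A))\phi$ (Lemma~\ref{Compact}) gives the contradiction. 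The delicate point is the symmetry identity $\langle L\phi,\partial_j\mathcal PU\rangle\approx0$: since $\partial_j\mathcal PU$ solves the linearized equation only up to $\partial_jh$-terms, we must check that these are $o(\epsilon)$ in the pairing. We will do this using the gradient bound on $h$ and the Hölder pairings of Section~3.4. With invertibility in hand, Propositions~\ref{Stability2} and \ref{Lipschitz} and Lemma~\ref{continuité phi} go through verbatim, producing $\phi_{\epsilon,P}$ with $\|\phi_{\epsilon,P}\|_X\le N\epsilon$.

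\textbf{Step 3: reduced problem and conclusion.} Finally we recompute $G_{\epsilon,k,j}$. The leading Pohozaev-type term $-\epsilon\,\partial_jQ(P_k)Q(P_k)^l\int U^{q+1}/(q+1)$ is unchanged. The boundary integrals arising from the Stokes formula on $\Omega_\epsilon$ involve $U^{q+1}$ on $\partial\Omega_\epsilon$ and are $O(\epsilon^{(n-2)(q+1)}\epsilon^{1-n})=o(\epsilon)$. All $h$-corrections are $o(\epsilon)$ by Step~1. Proposition~\ref{Brouwer 2} therefore holds, and the degree argument of Lemmas~\ref{Brouwer2} and \ref{solution} yields $P_\epsilon\to(\xi_1,\xi_2)$. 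Regularity (Lemma~\ref{regularity2}, now with boundary regularity on the smooth domain $\Omega_\epsilon$), positivity via testing with $W^-$ and the strong maximum principle, and concentration via estimate \eqref{controlSup} follow exactly as in Section~3.5.
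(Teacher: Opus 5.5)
Your outline follows the paper's proof: projected bubbles $V_{\epsilon,P}=i_\epsilon^*(f(U_{\epsilon,P})-Q(P)g(U_{\epsilon,P}))$, Green's-function comparison for $i_\epsilon^*$, test functions eventually supported in $\Omega_{\epsilon_m}$, the boundary term of order $\epsilon^{(n-2)(q+1)+1-n}$ in the Stokes computation, and the degree argument. However, two steps fail as written.

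\textbf{The kernel is not in $X_\epsilon$.} The span of the $\partial_j\mathcal PU_{\epsilon,k}$ is not a subspace of $X_\epsilon$. $\mathcal PU_{\epsilon,k}$ vanishes on $\partial\Omega_\epsilon$, but its normal derivative does not. So $\partial_{x_j}\mathcal PU_{\epsilon,k}$ has trace $\nu_j\,\partial_\nu\mathcal PU_{\epsilon,k}\neq0$, and its zero extension is not even in $D^{1,2}(\mathbb{R}^n)$. Hence $\phi-\sum c_{k,j}\partial_j\mathcal PU_{\epsilon,k}$ leaves $H^1_0(\Omega_\epsilon)$ unless all $c_{k,j}=0$. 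Neither the projection of Lemma~\ref{Projection prop} nor the final step $W-i_\epsilon^*(\cdots)\in K_{\epsilon,P}\cap K_{\epsilon,P}^\perp=\{0\}$ then makes sense. Your orthogonality conditions are fine. The fix is to span the complement by their Riesz representatives in $H^1_0(\Omega_\epsilon)$, which are exactly the paper's $\psi_{\epsilon,P,j}=i_\epsilon^*((f'(U_{\epsilon,P})-Q(P)g'(U_{\epsilon,P}))\partial_jU_{\epsilon,P})$.

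Since $\psi_{\epsilon,P,j}-\partial_jU_{\epsilon,P}$ is harmonic in $\Omega_\epsilon$, one has $\langle\phi,\psi_{\epsilon,P,j}\rangle_\epsilon=\langle\phi,\partial_jU_{\epsilon,P}\rangle$ for every $\phi\in H^1_0(\Omega_\epsilon)$. So orthogonality passes to the limit exactly. In Step 2 the symmetry identity is only used for $L_{\infty,A_{k_0}}$, where it is exact. The $o(\epsilon)$ estimate you call the delicate point actually belongs to Step 3. There, with $F=f'(U_{\epsilon,k_0})-Q(P_{k_0})g'(U_{\epsilon,k_0})$, one has $\langle\phi,\psi_{\epsilon,k_0,j}\rangle_\epsilon-\langle i_\epsilon^*(F\phi),\psi_{\epsilon,k_0,j}\rangle_\epsilon=\int F\phi\,(\partial_jU_{\epsilon,k_0}-\psi_{\epsilon,k_0,j})$. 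This is $O(\|\phi\|_X\,\epsilon^{n-1-n/2})=o(\epsilon)$ by $\|\psi_{\epsilon,k_0,j}-\partial_jU_{\epsilon,k_0}\|_\infty\le C\epsilon^{n-1}$ and $|\Omega_\epsilon|\le C\epsilon^{-n}$.

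\textbf{The $L^s$ bound on $h_{\epsilon,k}$.} Your $D^{1,2}$ bound on $h_{\epsilon,k}$ is correct, but $\|h_{\epsilon,k}\|_s\le C\epsilon^{n-2-n/s}=o(\epsilon)$ ``because $s>n/(n-3)$'' is not available. The constraints on $s$ only give $s>\max\{n/(n-2),\,n/((n-2)q-3)\}$, and $n/((n-2)q-3)<n/(n-3)$ because $q>n/(n-2)$. Worse, $s\le n(q-1)/2$ forces $s\le n/(n-3)$ whenever $q\le(n-1)/(n-3)$. Such $q$ are admissible, since $q_1<(n-1)/(n-3)$ (for $n=9$, $q_1\approx1.320<4/3$).

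Your bound is also sharp. Since $U(r)\sim c\,r^{2-n}$, the boundary data, and hence $h_{\epsilon,k}$ by the maximum principle, are of exact order $\epsilon^{n-2}$ on all of $\Omega_\epsilon$. So for those $q$ the quantity $\|\mathcal PU_{\epsilon,k}-U_{\epsilon,k}\|_X$ is not $o(\epsilon)$ and cannot carry the error estimates in your Steps 1 and 3. What works, and what the paper does in Lemma~\ref{Control bounded}, is to estimate every $h$-correction at the level of the sources $h^q$, $U^{q-1}h$, $(g'(\mathcal PU)-g'(U))\phi$ (and their $f$-analogues). This uses $\|h\|_\infty\le C\epsilon^{n-2}$, $|\Omega_\epsilon|\le C\epsilon^{-n}$, $n\geq9$, and the standing hypothesis $\beta(\alpha q-1)>n$ for $\beta=\frac{ns}{n+2s}$. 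It yields $o(\epsilon)$, respectively $o(1)\|\phi\|_X$, without ever needing $\|h\|_X$.

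Finally, \eqref{controlSup} cannot be applied verbatim on balls meeting $\partial\Omega_\epsilon$. The paper handles this by observing that the zero extension of $W_\epsilon$ is a subsolution.
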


We use the same strategy as before: find a suitable manner to rewrite the equation; solve the auxiliary equation with the Banach fixed-point theorem with parameter; solve the bifurcation using the Brouwer degree; check that the function found has the expected properties. Thus, this section will have the same structure as the section before and we will only dwell on the differences between them.  
There are some important differences with the unbounded case. Indeed, our rescaling problem is in this case 

\begin{equation*}
 \begin{cases} 
    -\Delta u(x)=u^p(x)-Q(\epsilon x)u^q(x) \quad &\text{in} \quad \Omega_{\epsilon}, \\
        \quad u > 0  \quad &\text{in}  \quad \Omega_{\epsilon},\\
      \quad u=0   &\text{on}\quad \partial\Omega_{\epsilon}, 
     \end{cases}
\end{equation*} 
where $\Omega_{\epsilon}=\frac{\Omega}{\epsilon}$. We are so looking for a solution belonging at least to $H_0^1(\Omega_{\epsilon})$. Thus we cannot work with $i^*$  because $i^*(v)$ does not necessarily belong to this space even if $v $ is null outside $\Omega_{\epsilon}$. We need thus to replace $i^*$ by a suitable function. This is the goal of the next section. In the same spirit, $U$ has to be replaced by a suitable function of $H_0^1(\Omega_{\epsilon})$. This is the goal of Section 3.2.
\subsection{The functional framework}
Set $\Omega_{\epsilon}=\frac{\Omega}{\epsilon}$.

In this section, we will work with the space: $ H^1_0(\Omega_{\epsilon})$ given with the scalar product
$$\langle u,v\rangle_{\epsilon}= \int_{\Omega_{\epsilon}}\nabla u(x)\nabla v(x) dx.$$

As before we choose to work with 
$X_{\epsilon}:=L^s(\Omega_{\epsilon})\cap H^1_0(\Omega_{\epsilon})$ associated with the norm $$\|u\|_{X_\epsilon}= max\left\{\|u\|_{H^{1}_0(\Omega_{\epsilon})},\|u\|_{L^s(\Omega_\epsilon)}\right\}.$$ In the following we will identify $\phi \in H^1_0(\Omega_{\epsilon})$ with the extension by $0$ of $\phi$ on $\mathbb{R}^n-\Omega_{\epsilon}$. Thus, $X_{\epsilon}\subset X$. In addition, we have that $$\forall \phi \in X,N_\epsilon(\phi):=\max\left\{\|\nabla\phi\|_{L^2(\Omega_\epsilon)},\|\phi\|_{L^s(\Omega_\epsilon)}\right\}\leq \|\phi\|_X.$$ and 
$$\forall \phi \in X_{\epsilon},\|\phi\|_{X_{\epsilon}}=\|\phi\|_X.$$
In the following, we will not distinguish $N_\epsilon$ and $\|.\|_{\epsilon}$.
\medskip

We will "replace" $i$ by the inclusion $i_{\epsilon}: H_0^1(\Omega_{\epsilon})\rightarrow L^{\frac{2n}{n-2}}(\Omega_{\epsilon})$ which is continuous.

$$\\$$
As before we can define the adjoint $i_{\epsilon}^*:L^\frac{2n}{n+2}(\Omega_{\epsilon})\rightarrow H^{1}_0(\Omega_{\epsilon})$ which will be useful for our study 

$$i_{\epsilon}^*(v)= u \iff \forall \phi \in H_0^1(\Omega_{\epsilon}), \langle u,\phi\rangle_{\epsilon}=\int_{\Omega_{\epsilon}} v\phi.$$

\noindent As in Section 3, we need some control on $i_\epsilon^*$. The next lemma shows this control can even be independent $\quad$ of $\epsilon$.
\begin{lemma}\label{control L^s bounded}
$\\$$\\$
We have the following inequalities 
\begin{itemize}
    \item There exists $C>0$ (which does not depend on $\epsilon$) such that, 
    $$\forall v\in L^{\frac{2n}{n+2}}(\Omega_{\epsilon}),\|i^*_{\epsilon}(v)\|_{H^1_0(\Omega_{\epsilon})}\leq C\|v\|_{L^\frac{2n}{n+2}(\Omega_{\epsilon})}.$$
    \item There exists $C>0$ (which does not depend on $\epsilon$) such that for all $v \in L^{\frac{2n}{n+2}}(\Omega_{\epsilon})\cap L^{{\frac{ns}{n+2s}}}(\Omega_{\epsilon})$ then $i^*(v) \in L^s(\Omega_{\epsilon})$ and $\|i_{\epsilon}^*(v)\|_s \leq C\|v\|_{\frac{sn}{n+2s}}$.
\end{itemize}

\end{lemma}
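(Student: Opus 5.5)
The plan is to reduce both inequalities to their whole-space counterparts by extending functions by zero. For the first point, take $v\in L^{\frac{2n}{n+2}}(\Omega_\epsilon)$ and set $u=i_\epsilon^*(v)$. Testing the defining identity with $\phi=u$ gives $\|\nabla u\|_{L^2(\Omega_\epsilon)}^2=\int_{\Omega_\epsilon}vu$. Hölder's inequality then gives $\int_{\Omega_\epsilon}vu\le\|v\|_{\frac{2n}{n+2}}\|u\|_{2^*}$. Extend $u$ by zero, so that $u\in D^{1,2}(\mathbb{R}^n)$ with the same Dirichlet norm. The Gagliardo--Nirenberg--Sobolev inequality on $\mathbb{R}^n$ then yields $\|u\|_{2^*}\le C_n\|\nabla u\|_{L^2}$. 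Here $C_n$ depends only on $n$ and in particular not on $\epsilon$, so we obtain $\|u\|_{H^1_0(\Omega_\epsilon)}\le C_n\|v\|_{\frac{2n}{n+2}}$.

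For the $L^s$ bound, I would compare $i_\epsilon^*(v)$ with the Newtonian potential through the maximum principle, rather than argue by duality. First reduce to $v\ge0$ by splitting $v=v^+-v^-$. By linearity $i^*_\epsilon(v)=i^*_\epsilon(v^+)-i^*_\epsilon(v^-)$, so it suffices to treat nonnegative data. Let $\tilde v$ be the extension of $v$ by zero, and put $w=i^*(\tilde v)=c_n|x|^{2-n}*\tilde v$, which is the object used in Lemma \ref{control L^s}. Since $\tilde v\ge0$ we have $w\ge0$. Moreover $u=i^*_\epsilon(v)$ satisfies $-\Delta u=v$ weakly in $\Omega_\epsilon$ with $u=0$ on $\partial\Omega_\epsilon$. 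Hence $-\Delta(w-u)=0$ in $\Omega_\epsilon$, and $w-u\ge0$ on the boundary.

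To make this rigorous I would test the equation for $u-w$ with $(u-w)^+\in H^1_0(\Omega_\epsilon)$. This is legitimate because $w\ge0$, so $(u-w)^+\le u^+$ and $(u-w)^+$ vanishes on $\partial\Omega_\epsilon$. The test gives $\int|\nabla(u-w)^+|^2=0$, hence $u\le w$. By the weak maximum principle, obtained by testing with $u^-$, we also have $u\ge0$. Therefore $0\le u\le w$ pointwise, and Lemma \ref{control L^s} (the Hardy--Littlewood--Sobolev inequality with $s>\frac{n}{n-2}$) gives $\|u\|_{L^s(\Omega_\epsilon)}\le\|w\|_{L^s(\mathbb{R}^n)}\le C\|\tilde v\|_{\frac{ns}{n+2s}}=C\|v\|_{L^{\frac{ns}{n+2s}}(\Omega_\epsilon)}$. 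The constant $C$ is the Hardy--Littlewood--Sobolev constant and is independent of $\epsilon$. For general $v$ we apply this to $v^\pm$ and use $\|v^\pm\|\le\|v\|$.

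The main obstacle I expect is the justification of the comparison step in the weak setting. One must check that $w\in D^{1,2}(\mathbb{R}^n)$ restricts to $H^1(\Omega_\epsilon)$ and satisfies $\int\nabla w\nabla\phi=\int\tilde v\phi$ for $\phi\in H^1_0(\Omega_\epsilon)$. This holds because $H^1_0(\Omega_\epsilon)\subset D^{1,2}(\mathbb{R}^n)$ after extension by zero. One must also check that $(u-w)^+$ is an admissible test function. Once these are in place, both the $\epsilon$-independence of the constants and the pointwise domination $|i^*_\epsilon(v)|\le i^*(|\tilde v|)$ follow directly.
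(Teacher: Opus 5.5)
Your proposal is correct and follows essentially the paper's argument. The first bound comes from the Gagliardo--Nirenberg--Sobolev inequality on $\mathbb{R}^n$ after extension by zero, and the second from dominating $i^*_\epsilon(v)$ by the whole-space Newtonian potential and then applying Hardy--Littlewood--Sobolev; your weak comparison $0\le i^*_\epsilon(v)\le i^*(\tilde v)$ for $v\ge 0$ is a rigorous version of the paper's one-line remark that the Green's function of $\Omega_\epsilon$ is bounded by that of $\mathbb{R}^n$.
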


\begin{proof}
The first point is clear because the constant in the Gagliardo—Nirenberg—Sobolev inequality does not depend on the domain. The second point is a consequence of the fact that the Green's function for the Laplacian on a bounded domain is bounded by the green function on $\mathbb{R}^n$ and the Hardy-Littlewood-Sobolev inequality does not depend on $\epsilon$. 
\end{proof}

\begin{remark}

The advantage of this space is that we can control by interpolation all spaces $L^r(\Omega_{\epsilon})$ with $s\leq r\leq 2^*$ with some constants which do not depend on $\epsilon$.

\end{remark}

\subsection{Preliminaries }

In this section, we will note: $U_{\epsilon,P}(x)= U_P(x-\frac{P}{\epsilon})$. Contrary to the previous case, we will not  directly use $U_{\epsilon,P}$ because $U_{\epsilon,P} \notin H_0^1(\Omega_{\epsilon})$.
That is why we now define $V_{\epsilon,P}$ the projection of $U_{\epsilon,P}$ as follows: 

$\\$

$V_{\epsilon,P} \in H^1_0(\Omega_{\epsilon})$ is the unique solution of the problem
\begin{equation*}\label{Projection} 
\begin{cases} 
   -\Delta u(x)=U_{\epsilon,P}^p(x)-Q(P)U_{\epsilon,P}^q(x) \quad &\text{in} \quad \Omega_{\epsilon}, \\
        
        \quad u= 0, \quad &\text{on} \quad  \partial \Omega_{\epsilon}. 
\end{cases}
\end{equation*} 

Note that $V_{\epsilon,P}\in C^2(\bar\Omega_{\epsilon})$ because $\Omega$ and $U$ are smooth.
\medskip

Recall that $U_{\epsilon,P}$ solves the following problem
\begin{equation*}\label{Projection} 
 \begin{cases} 
  -\Delta u(x)=U_{\epsilon,P}^p(x)-Q(P)U_{\epsilon,P}^q(x)&\text{in} \quad \Omega_{\epsilon}, \\
        
        \quad u(x) = U_{\epsilon,P}(x) &\text{on} \quad  \partial \Omega_{\epsilon}. 
\end{cases}
\end{equation*}

Then, $U_{\epsilon,P}-V_{\epsilon,P}$ solves the problem 

\begin{equation*}
 \begin{cases} 
  \Delta u(x)=0&\text{in} \quad \Omega_{\epsilon}, \\
        
        \quad u(x) = U_{\epsilon,P}(x) &\text{on} \quad  \partial \Omega_{\epsilon}. 
\end{cases}
\end{equation*}

Therefore, by the maximum principle (because $U,V$ and $\Omega$ are regular enough), in $\Omega_{\epsilon}$,

$$|U_{\epsilon,P}-V_{\epsilon,P}|\leq max_{x \in \partial\Omega_{\epsilon}} U_{\epsilon,P}(x) $$

But recall that
$$|U_P(x)|\leq C(P)\frac{1}{1+|x|^\alpha}$$
Thus,
$$max_{x \in \partial\Omega_{\epsilon}} U_{\epsilon,P}(x)\leq C(P)max_{x \in \partial \Omega_{\epsilon}}\frac{1}{1 +|x-\frac{P}{\epsilon}|^\alpha}\leq C(P)max_{x \in \partial \Omega_{\epsilon}}\frac{\epsilon^\alpha}{\epsilon^\alpha +d(\partial\Omega,P)^\alpha}\leq C\epsilon^{\alpha}$$
where $C$ a constant which does not depend on $P$ if $P$ belongs to some compact set (that will be always the case).

Thus,

$$|U_{\epsilon,P}-V_{\epsilon,P}|\leq C\epsilon^\alpha$$

\noindent Furthermore, we also consider the projection $\psi_{\epsilon,P,j}$ of $\frac{\partial U_{\epsilon,P}} {\partial x_j}$ as follows

$\psi_{\epsilon,P,j}\in H^1_0(\Omega_{\epsilon})$ is the unique solution of the problem

\begin{equation*}\label{Projection derivation} 
\begin{cases} 
  -\Delta u=\left(pU_{\epsilon,P}^{p-1}(x)-Q(P)qU_{\epsilon,P}^{q-1}\right)\frac{\partial U_{\epsilon,P}}{\partial x_j}&\text{in} \quad \Omega_{\epsilon}, \\
        
        \quad u(x) = 0&\text{on} \quad  \partial \Omega_{\epsilon}.
\end{cases}
\end{equation*}

Thus, as before, in $\Omega_{\epsilon}$,

$$\left|\psi_{\epsilon,P,j}-\frac{\partial U_{\epsilon,P}}{\partial x_j}\right| \leq C\epsilon^{\alpha+1}.$$

We also need the following results

\begin{lemma}\label{Continuity P}
Consider $\epsilon >0$. Then, 
\begin{equation*}
\begin{cases}

\Omega \rightarrow X_{\epsilon}\\
 P \mapsto V_{\epsilon,P}
\end{cases}
\end{equation*}

and 
\begin{equation*}
\begin{cases}

\Omega \rightarrow X_{\epsilon}\\
 P \mapsto \psi_{\epsilon,P,j}
\end{cases}
\end{equation*}

\end{lemma}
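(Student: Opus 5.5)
The lemma presumably asserts that, for fixed $\epsilon>0$, the maps $P\mapsto V_{\epsilon,P}$ and $P\mapsto \psi_{\epsilon,P,j}$ are continuous from $\Omega$ into $X_\epsilon$. My plan is to reduce continuity in $X_\epsilon$ to continuity of the right-hand sides in the appropriate Lebesgue spaces. Then I apply the uniform bounds on $i_\epsilon^*$ from Lemma \ref{control L^s bounded}.

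By definition,
\[
V_{\epsilon,P}=i_\epsilon^*\bigl(h_P\bigr),\qquad h_P:=\bigl(U_{\epsilon,P}^p-Q(P)U_{\epsilon,P}^q\bigr)\big|_{\Omega_\epsilon},
\]
and similarly
\[
\psi_{\epsilon,P,j}=i_\epsilon^*(k_P),\qquad k_P:=\bigl(pU_{\epsilon,P}^{p-1}-qQ(P)U_{\epsilon,P}^{q-1}\bigr)\frac{\partial U_{\epsilon,P}}{\partial x_j}.
\]
Both $i_\epsilon^*$ and the Dirichlet problem are linear. By Lemma \ref{control L^s bounded},
\[
\|V_{\epsilon,P}-V_{\epsilon,P'}\|_{X_\epsilon}\le C\bigl(\|h_P-h_{P'}\|_{L^{\frac{2n}{n+2}}(\Omega_\epsilon)}+\|h_P-h_{P'}\|_{L^{\frac{ns}{n+2s}}(\Omega_\epsilon)}\bigr),
\]
and the same bound holds for $\psi$ with $k_P$ in place of $h_P$. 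It therefore suffices to show that $P\mapsto h_P$ and $P\mapsto k_P$ are continuous into $L^{\beta}(\Omega_\epsilon)$ for $\beta\in\{\frac{2n}{n+2},\frac{ns}{n+2s}\}$.

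Now fix $P_m\to P$ in $\Omega$, so the $P_m$ stay in a compact $E\subset\Omega$ on which $\nu\le Q\le M$ with $\nu>0$. Since $\epsilon$ is fixed and $\Omega_\epsilon$ is bounded, the argument is simple. The functions $U_{\epsilon,P_m}(x)=Q(P_m)^aU\bigl(Q(P_m)^b(x-P_m/\epsilon)\bigr)$ converge to $U_{\epsilon,P}$ pointwise, by continuity of $Q$ and $U$. The same holds for their first derivatives, using $U\in C^1$ and the continuity of $P\mapsto Q(P)^{a+b}$. Hence $h_{P_m}\to h_P$ and $k_{P_m}\to k_P$ almost everywhere. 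Moreover $U$ and $\nabla U$ are bounded, so we get a uniform bound $|h_{P_m}|+|k_{P_m}|\le C(M,\nu)$ on $\Omega_\epsilon$. Constants are integrable on the bounded set $\Omega_\epsilon$. Dominated convergence then gives $\|h_{P_m}-h_P\|_\beta\to0$ and $\|k_{P_m}-k_P\|_\beta\to0$, which concludes.

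There is no real obstacle here. The only point needing care is that no decay or interaction estimates are needed, because $\epsilon$ is fixed and the domain is bounded. If instead uniformity in $\epsilon$ were wanted, I would replicate the dominating function $C\,U(\nu^b\,\cdot)^{q}$ used in the proof of Lemma \ref{ContinuityP}, after translating by $P/\epsilon$.
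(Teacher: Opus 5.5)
Your proof is correct and is essentially the paper's argument. Both write $V_{\epsilon,P}=i_\epsilon^*(h_P)$ and $\psi_{\epsilon,P,j}=i_\epsilon^*(k_P)$, and combine the continuity of $i_\epsilon^*$ (Lemma \ref{control L^s bounded}) with continuity of $P\mapsto h_P$ and $P\mapsto k_P$ in the relevant Lebesgue spaces; the paper merely asserts the latter, while you justify it by pointwise convergence plus a constant dominating function on the bounded set $\Omega_\epsilon$, which is all that is needed for fixed $\epsilon$.
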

are continuous.
\begin{proof}
Notice $i_{\epsilon}^*$ is continuous and 

$$P\mapsto f(U_{\epsilon,P})-Q(P) g(U_{\epsilon,P})\quad \text{and} \quad P\mapsto  (f'(U_{\epsilon,P})-Q(P) g'(U_{\epsilon,P}))\frac{\partial U_{\epsilon,P}}{\partial x_j}$$

are continuous.
\end{proof}

\begin{lemma}\label{Convergence Projection}
Consider $E$ a compact set of $\Omega$. Consider sequences $(\epsilon_m)\in (\mathbb{R_+^*})^{\mathbb{N}}$ and $(A_m)\in \Omega^{\mathbb{N}}$ such that $\epsilon_m \rightarrow 0$ and $A_m \rightarrow A\in \Omega$. 
Then, 
$$\left|\left|\psi_{\epsilon_m,m,j}-\frac{\partial U_{\epsilon_m,A_m}}{\partial x_j}\right|\right|_{X}\rightarrow 0.$$

\end{lemma}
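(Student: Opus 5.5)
We need to show $\|\psi_{\epsilon_m,A_m,j}-\partial_j U_{\epsilon_m,A_m}\|_X\to0$, where $\psi_{\epsilon_m,A_m,j}$ is extended by zero outside $\Omega_{\epsilon_m}$. Set $w_m:=\partial_jU_{\epsilon_m,A_m}$ and $h_m:=(f'(U_{\epsilon_m,A_m})-Q(A_m)g'(U_{\epsilon_m,A_m}))w_m$. Differentiating the limit equation shows that $w_m=i^*(h_m)$ on $\mathbb{R}^n$, while by definition $\psi_m:=\psi_{\epsilon_m,A_m,j}=i^*_{\epsilon_m}(h_m|_{\Omega_{\epsilon_m}})$. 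The plan is to compare these two through the harmonic remainder. The difference $z_m:=w_m-\psi_m$ is harmonic in $\Omega_{\epsilon_m}$ and equals $w_m$ on $\partial\Omega_{\epsilon_m}$. Outside $\Omega_{\epsilon_m}$ we have $z_m=w_m$. The quantity $d_m:=\operatorname{dist}(A_m/\epsilon_m,\partial\Omega_{\epsilon_m})=d(A_m,\partial\Omega)/\epsilon_m$ tends to $+\infty$, since $A\in\Omega$.

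\textbf{$D^{1,2}$ part.} Since $\psi_m$ is the $H^1_0(\Omega_{\epsilon_m})$-orthogonal projection of $w_m$, we have $\|\nabla(w_m-\psi_m)\|_{L^2(\Omega_{\epsilon_m})}=\min_{v\in H^1_0}\|\nabla(w_m-v)\|_{L^2(\Omega_{\epsilon_m})}$. Indeed, $\int\nabla\psi_m\nabla v=\int h_mv=\int\nabla w_m\nabla v$ for $v\in H^1_0(\Omega_{\epsilon_m})$. I would choose $v=\chi_m w_m$, where $\chi_m$ is a cutoff equal to $1$ on $B(A_m/\epsilon_m,d_m/2)$, supported in $B(A_m/\epsilon_m,d_m)$, with $|\nabla\chi_m|\le C/d_m$. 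Then
\[
\|\nabla((1-\chi_m)w_m)\|_2\le\|\nabla w_m\|_{L^2(|y|\ge d_m/2)}+Cd_m^{-1}\|w_m\|_{L^2(d_m/2\le|y|\le d_m)}.
\]
Using $|w_m|\lesssim|y|^{1-n}$ and $|\nabla w_m|\lesssim|y|^{-n}$ (uniformly, since $Q(A_m)\in[\nu,M]$), both terms are $O(d_m^{-n/2})\to0$. Adding the exterior piece $\|\nabla w_m\|_{L^2(\mathbb{R}^n\setminus\Omega_{\epsilon_m})}\le\|\nabla w_m\|_{L^2(|y|\ge d_m)}\to0$ gives $\|z_m\|_{D^{1,2}}\to0$.

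\textbf{$L^s$ part.} This is the main obstacle, since the energy estimate does not control $L^s$ with $s<2^*$ on the growing domain. I would split $z_m$ into its exterior and interior pieces. The exterior piece is controlled by $\|w_m\|_{L^s(|y|\ge d_m)}\to0$, because $(n-1)s>n$. On $\Omega_{\epsilon_m}$, the maximum principle gives $|z_m|\le\max_{\partial\Omega_{\epsilon_m}}|w_m|\le C\epsilon_m^{n-1}$. This alone is too weak, because $|\Omega_{\epsilon_m}|\sim\epsilon_m^{-n}$ and $s(n-1)$ need not exceed $n$ enough. Instead I would use the representation through Lemma \ref{control L^s bounded}: write $\psi_m=i^*_{\epsilon_m}(h_m)$ and $w_m=i^*(h_m)$, and compare via Green's functions, $z_m(y)=\int_{\Omega_{\epsilon_m}}H_{\epsilon_m}(y,x)h_m(x)\,dx$ on $\Omega_{\epsilon_m}$, where $H_{\epsilon}$ is the regular part of the Green function, $0\le H_\epsilon(y,x)\le c_n|x-y|^{2-n}$. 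Splitting $h_m=h_m\mathbf 1_{|x-A_m/\epsilon_m|<d_m/2}+h_m\mathbf 1_{\ge d_m/2}$, the far part is bounded in $L^s$ by Hardy–Littlewood–Sobolev, by $C\|h_m\|_{L^{ns/(n+2s)}(|x|\ge d_m/2)}\to0$; the decay of $h_m\sim|x|^{-\alpha(q-1)-(n-1)}$ makes this integrable, using $s>n/(n-2)$. For the near part, note that $H_\epsilon(\cdot,x)$ is harmonic with boundary values $c_n|x-\cdot|^{2-n}$. By the maximum principle it is therefore bounded by $c_n\,\Gamma$-type decay $C\,d_m^{n-2}\,|y-x^*|^{2-n}$-like bounds, at least $H_\epsilon(y,x)\le C\min\bigl(d_m^{2-n},\operatorname{dist}(y,\mathbb{R}^n\setminus\Omega_{\epsilon_m})^{\,\cdot}\bigr)$. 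I would first try the cruder bound
\[
H_\epsilon(y,x)\le c_n\,|y-x|^{2-n}\mathbf 1_{|y-A_m/\epsilon_m|\ge d_m/4}+C d_m^{2-n}\mathbf 1_{|y-A_m/\epsilon_m|<d_m/4}.
\]
The first term is again handled by Hardy–Littlewood–Sobolev restricted to the far region, with a factor of the tail of $|\cdot|^{2-n}*|h_m|\lesssim|y|^{2-n}$, which lies in $L^s$ of the exterior of a ball of radius $d_m/4$ and tends to $0$. The second term has $L^s$ norm at most $Cd_m^{2-n}\|h_m\|_1d_m^{n/s}\to0$ because $s>n/(n-2)$. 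Combining the two parts gives $\|z_m\|_s\to0$, which concludes the proof.
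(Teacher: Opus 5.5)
Your argument is correct in substance, but it differs from the paper's in both parts. In the $L^s$ part, the detour rests on a miscalculation.

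\textbf{The $D^{1,2}$ part.} The paper writes $\|\nabla(\psi_m-w_m)\|_{L^2(\Omega_{\epsilon_m})}^2=\langle w_m,w_m\rangle-\langle w_m,\psi_m\rangle$. The term $\langle\psi_m,\psi_m-w_m\rangle$ vanishes because $\psi_m-w_m$ is harmonic and $\psi_m\in H^1_0$. Both remaining terms are then evaluated by Green's formula. The interior difference is bounded by $\|\Delta w_m\|_1\|\psi_m-w_m\|_{L^\infty(\Omega_{\epsilon_m})}=O(\epsilon_m^{n-1})$, using the maximum-principle bound. The boundary term $\int_{\partial\Omega_{\epsilon_m}}w_m\,\partial_\nu w_m$ is bounded by $C\,H^{n-1}(\partial\Omega_{\epsilon_m})\,\epsilon_m^{2n-1}=O(\epsilon_m^{n})$.

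You use the same orthogonality, but in its best-approximation form with the competitor $\chi_mw_m$. This needs no boundary integrals and no $L^\infty$ bound, only the decay of $w_m$ and $\nabla w_m$ (i.e.\ of $D^2U$, which the paper also asserts). It also yields the explicit rate $O(d_m^{-n/2})$. Take $\chi_m$ supported in, say, $B(A_m/\epsilon_m,3d_m/4)$, so that $\chi_mw_m$ has compact support in $\Omega_{\epsilon_m}$.

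\textbf{The $L^s$ part.} The direct estimate you discard is exactly the paper's proof, and it works. From $|z_m|\le C\epsilon_m^{n-1}$ on $\Omega_{\epsilon_m}$ and $|\Omega_{\epsilon_m}|\le C\epsilon_m^{-n}$,
\[
\|z_m\|_{L^s(\Omega_{\epsilon_m})}\le C\epsilon_m^{\,n-1-n/s}\longrightarrow 0 .
\]
Here $n-1-n/s>0$ is equivalent to $(n-1)s>n$. That is the very inequality you use for the exterior piece, and it holds since $s>n/(n-2)$. Nothing beyond this strict inequality is needed.

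Your Green-function route also works but is much heavier, and as written the representation is incomplete. With $\Gamma(z)=c_n|z|^{2-n}$, $w_m=\Gamma*h_m$ uses $h_m$ on all of $\mathbb{R}^n$, whereas $\psi_m$ only sees $h_m$ on $\Omega_{\epsilon_m}$. So on $\Omega_{\epsilon_m}$,
\[
z_m(y)=\int_{\Omega_{\epsilon_m}}H_{\epsilon_m}(y,x)h_m(x)\,dx+\int_{\mathbb{R}^n\setminus\Omega_{\epsilon_m}}\Gamma(y-x)h_m(x)\,dx .
\]
The extra term is harmless: by Hardy--Littlewood--Sobolev it is controlled by $\|h_m\|_{L^{ns/(n+2s)}(\mathbb{R}^n\setminus\Omega_{\epsilon_m})}\to0$. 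It is absorbed into your far part if that part is taken over all of $\mathbb{R}^n$.

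Note also that for $|x-A_m/\epsilon_m|<d_m/2$, the boundary values of $H_{\epsilon_m}(\cdot,x)$ are at most $c_n(d_m/2)^{2-n}$. Hence $H_{\epsilon_m}(y,x)\le Cd_m^{2-n}$ on all of $\Omega_{\epsilon_m}$. Your near-part bound $Cd_m^{2-n}\|h_m\|_1d_m^{n/s}$ is therefore the same supremum-times-volume estimate you rejected, only with the weaker exponent $n-2$ in place of $n-1$. The garbled sentence about ``$d_m^{n-2}|y-x^*|^{2-n}$-like bounds'' can simply be deleted.
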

\begin{proof}

It is quite obvious that $\left|\left|1_{\mathbb{R}^n-\Omega_{\epsilon_m}}\frac{\partial U_{\epsilon_m,A_m}}{\partial x_j}\right|\right|_X\rightarrow0$.
\smallskip
Furthermore, one the one hand,
$$\left|\left|\psi_{\epsilon_m,m,j}-\frac{\partial U_{\epsilon_m,A_m}}{\partial x_j}\right|\right|_{L^s(\Omega_{\epsilon_m})}\leq \left|\left|\psi_{\epsilon_m,m,j}-\frac{\partial U_{\epsilon_m,A_m}}{\partial x_j}\right|\right|_{L^\infty(\Omega_{\epsilon_m})}mes(\Omega_{\epsilon_m})^\frac{1}{s}\leq C\epsilon_m^{n-1}mes(\Omega_{\epsilon_m})^{\frac{1}{s}}$$

But, $\Omega$ is bounded, then we can find $l>0$ such that $\Omega\subset B(0,l)$ then, $\Omega_{\epsilon_m}\subset B(0,\frac{l}{\epsilon_m})$ then $mes(\Omega_{\epsilon_m})^{\frac{1}{s}}\leq M\epsilon_m^{-\frac{n}{s}}$. 
Then, 
$$\left\|\psi_{\epsilon_m,m,j}-\frac{\partial U_{\epsilon_m,A_m}}{\partial x_j}\right\|_{L^s(\Omega_{\epsilon_m})}\leq M\epsilon_m^{n-1-\frac{n}{s}}=o(1)$$
Because $s>\frac{n}{n-2}>\frac{n}{n-1}$.

$\\$$\\$
On the other hand, 
$$\left\|\psi_{\epsilon_m,m,j}-\frac{\partial U_{\epsilon_m,A_m}}{\partial x_j}\right\|_{D^{1,2}(\mathbb{R}^n)}^2=\left \langle\psi_{\epsilon_m,m,j}-\frac{\partial U_{\epsilon_m,A_m}}{\partial x_j}|\psi_{\epsilon_m,m,j}-\frac{\partial U_{\epsilon_m,A_m}}{\partial x_j} \right\rangle$$
$$=\left\langle\psi_{\epsilon_m,m,j}|\psi_{\epsilon_m,m,j}-\frac{\partial U_{\epsilon_m,A_m}}{\partial x_j} \right\rangle-\left\langle\frac{\partial U_{\epsilon_m,A_m}}{\partial x_j}|\psi_{\epsilon_m,m,j}-\frac{\partial U_{\epsilon_m,A_m}}{\partial x_j} \right\rangle$$

but because $\Delta(\psi_{\epsilon_m,m,j}-\frac{\partial U_{\epsilon_m,A_m}}{\partial x_j})=0$ and $\psi_{\epsilon_m,m,j}\in H_0^1(\Omega_{\epsilon_m})$ in the weak sense,

$$\left\langle\psi_{\epsilon_m,m,j}|\psi_{\epsilon_m,m,j}-\frac{\partial U_{\epsilon_m,A_m}}{\partial x_j} \right\rangle_{X_{\epsilon_m}}=0$$

In addition, 
$$\left\langle\frac{\partial U_{\epsilon_m,A_m}}{\partial x_j}|\psi_{\epsilon_m,m,j}-\frac{\partial U_{\epsilon_m,A_m}}{\partial x_j} \right\rangle=\left\langle\frac{\partial U_{\epsilon_m,A_m}}{\partial x_j}|\psi_{\epsilon_m,m,j}\right\rangle-\left\langle\frac{\partial U_{\epsilon_m,A_m}}{\partial x_j}|\frac{\partial U_{\epsilon_m,A_m}}{\partial x_j} \right\rangle$$

Then, by the Stokes Theorem
$$\left\langle\frac{\partial U_{\epsilon_m,A_m}}{\partial x_j}|\psi_{\epsilon_m,m,j}\right\rangle_{X_{\epsilon_m}}=-\left\langle\Delta\frac{\partial U_{\epsilon_m,A_m}}{\partial x_j}|\psi_{\epsilon_m,m,j}\right\rangle_2$$

Likewise, 
$$\left\langle\frac{\partial U_{\epsilon_m,A_m}}{\partial x_j}|\frac{\partial U_{\epsilon_m,A_m}}{\partial x_j} \right\rangle_{X_{\epsilon_m}}$$
$$=-\left\langle \Delta\frac{\partial U_{\epsilon_m,A_m}}{\partial x_j}|\frac{\partial U_{\epsilon_m,A_m}}{\partial x_j} \right\rangle_2+\int_{\partial\Omega_{\epsilon_m}}\frac{\partial U_{\epsilon_m,A_m}}{\partial x_j} \frac{\partial (\frac{\partial U_{\epsilon_m,A_m}}{\partial x_j})}{\partial v}dS $$

But, 
$$\left|\left\langle\Delta\frac{\partial U_{\epsilon_m,A_m}}{\partial x_j}|\psi_{\epsilon_m,m,j}\right\rangle_2-\left\langle \Delta\frac{\partial U_{\epsilon_m,A_m}}{\partial x_j}|\frac{\partial U_{\epsilon_m,A_m}}{\partial x_j} \right\rangle_2\right|$$
$$\leq \left|\left|\Delta\frac{\partial U_{\epsilon_m,A_m}}{\partial x_j}\right|\right|_1\left|\left|\psi_{\epsilon_m,m,j}-\frac{\partial U_{\epsilon_m,A_m}}{\partial x_j}\right|\right|_{\infty}\rightarrow0$$
In addition, 
$$\left|\int_{\partial\Omega_{\epsilon_m}}\frac{\partial U_{\epsilon_m,A_m}}{\partial x_j} \frac{\partial(\frac{\partial U_{\epsilon_m,A_m}}{\partial x_j})}{\partial v}dS\right|\leq H^{n-1}(\partial\Omega_{\epsilon_m})\left|\left|\frac{\partial U_{\epsilon_m,A_m}}{\partial x_j} \frac{\partial(\frac{\partial U_{\epsilon_m,A_m}}{\partial x_j})}{\partial v}\right|\right|_\infty \rightarrow0. $$
\end{proof}
\subsection{Solving the auxiliary equation}
For a fixed $\epsilon > 0$, we want to find a solution with the form $$W_{\epsilon,P}=  \sum V_{\epsilon,k} + \phi_{\epsilon,P} $$ with $\phi_{\epsilon,P}\in K_{\epsilon,P}^\perp =\left\{v \in X_\epsilon, \langle v|\psi_{\epsilon,k,j}\rangle_{\epsilon}=0\right \}$, $\epsilon$ is small enough and  $P \in \Omega^2$. Let $S_{\epsilon,P}:=\sum V_{\epsilon,k}$.

\noindent Equation \eqref{Main} becomes
\begin{equation}\label{BVP4}\tag{$11$}
 \begin{cases} 
    -\Delta u(x)=u^p(x)-Q(\epsilon x)u^q(x) \quad &\text{in} \quad \Omega_{\epsilon}, \\
        \quad u > 0  \quad &\text{in}  \quad \Omega_{\epsilon},\\
      \quad u=0   &\text{on}\quad \partial\Omega_{\epsilon}. 
\end{cases}
\end{equation} 

\noindent As before we firstly study the equation 
\begin{equation*}\label{Main-single-Bounded}
W = i_{\epsilon}^*(f(W) - g(W)Q_{\epsilon})
\end{equation*}

\subsubsection{Decomposition of the equation}
    
We rewrite as follows (using $\phi$ to mean  $\phi_{\epsilon,P}$):

$$W=N_f(\phi) + N_g(\phi) + I_f + I_g+ I_{f'} + I_{g'} +\sum i_{\epsilon}^*(g'(V_{\epsilon,k})\phi(Q(P_k)-Q_{\epsilon}))+\sum i_{\epsilon}^*((f'(V_{\epsilon,k})-g'(V_{\epsilon,k})Q(P_k))\phi)+$$$$  \sum i_{\epsilon}^*(f(V_{\epsilon,k})-Q(P_k)g(V_{\epsilon,k}))+\sum i_{\epsilon}^*((Q(P_k)-Q_\epsilon) g(V_{\epsilon,k})) $$

Where
$$
\left\{
\begin{array}{ll}
      N_f(\phi)=i_{\epsilon}^*((f(S_{\epsilon,P}+\phi)-f(S_{\epsilon,P})-f'(S_{\epsilon,P})\phi))\\
        N_g(\phi) = -i_{\epsilon}^*(Q_{\epsilon}[g(S_{\epsilon,P}+\phi)-g(S_{\epsilon,P})-g'(S_{\epsilon,P})\phi])\\
        
       I_f= i_{\epsilon}^*(f(S_{\epsilon,P})- \sum f(V_{\epsilon,k}))\\ I_g= -i_{\epsilon}^*(Q_{\epsilon}(g(S_{\epsilon,P})- \sum g(V_{\epsilon,k})))\\
       I_{f'}=i_{\epsilon}^*((f'(S_{\epsilon,P}) - \sum f'(V_{\epsilon,k}))\phi) \\
       I_{g'}= -i_{\epsilon}^*((g'(S_{\epsilon,P}) - \sum g'(V_{\epsilon,k}))Q_{\epsilon}\phi)\\   
\end{array}     
\right.
$$

Hence, because $S_{\epsilon,P}= \sum i_{\epsilon}^*(f(U_{\epsilon,k})-Q(P_k)g(U_{\epsilon,k}))$

$$\phi = N_f(\phi) + N_g(\phi) + I_f + I_g+ I_{f'} + I_{g'} +\sum i_{\epsilon}^*(g'(V_{\epsilon,k})\phi(Q(P_k)-Q_{\epsilon}))+\sum i_{\epsilon}^*((f'(V_{\epsilon,k})-g'(V_{\epsilon,k})Q(P_k))\phi)+$$$$  +\sum i_{\epsilon}^*((Q(P_k)-Q_\epsilon) g(V_{\epsilon,k})) +  \sum i_{\epsilon}^*(f(V_{\epsilon,k})-f(U_{\epsilon,k})-Q(P_k)(g(V_{\epsilon,k})-g(U_{\epsilon,k}))) $$

$$$$

Set
$$L_{\epsilon,P}\phi = \phi -  \sum i_{\epsilon}^*((f'(U_{\epsilon,k}) -g'(U_{\epsilon,k})Q(P_k))\phi)$$
 Therefore

 $$L_{\epsilon,P}\phi = N_f(\phi) + N_g(\phi) + I_f + I_g+ I_{f'} + I_{g'} $$$$  +\sum i_{\epsilon}^*(g'(V_{\epsilon,k})\phi(Q(P_k)-Q_{\epsilon}))+\sum i_{\epsilon}^*((Q(P_k)-Q_\epsilon) g(V_{\epsilon,k})) +  R_1 +R_2$$

 with 
$$\left\{
\begin{array}{ll}
      R_1=\sum i_{\epsilon}^*(f(V_{\epsilon,k})-f(U_{\epsilon,k})-Q(P_k)(g(V_{\epsilon,k})-g(U_{\epsilon,k}))) \\
        R_2=\sum i_{\epsilon}^*((f'(V_{\epsilon,k})-f'(U_{\epsilon,k})-[g'(V_{\epsilon,k})-g'(U_{\epsilon,k})]Q(P_k))\phi)
        \end{array}
\right. $$
\subsubsection{Study of $L_{\epsilon,P}$}
As before, the following lemmas are true

\begin{lemma}\label{Continuity 3}
$\\$
$L_{\epsilon,P}$ satisfies:
\begin{itemize}
    \item $L_{\epsilon,P}\phi \in X, \forall \phi \in X$ \item $L_{\epsilon,P}$ is continuous on $X$.
    \end{itemize}
\end{lemma}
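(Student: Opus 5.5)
The plan is to reduce Lemma~\ref{Continuity 3} to Lemma~\ref{continuity 1}, replacing $i^*$ by $i_\epsilon^*$ and invoking Lemma~\ref{control L^s bounded} wherever the whole-space argument used the continuity of $i^*$ and Lemma~\ref{control L^s}. Here $L_{\epsilon,P}$ is understood as acting on $X_\epsilon$; via extension by zero, $X_\epsilon\subset X$ and $\|\phi\|_{X_\epsilon}=\|\phi\|_X$. The output of $i_\epsilon^*$ lies in $H_0^1(\Omega_\epsilon)$, so it also lies in $X$.

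Fix $\phi\in X_\epsilon$ and $k\in\{1,2\}$, and set $h_k:=(f'(U_{\epsilon,k})-Q(P_k)g'(U_{\epsilon,k}))\phi$, restricted to $\Omega_\epsilon$. It suffices to show that $h_k\in L^{\frac{2n}{n+2}}(\Omega_\epsilon)\cap L^{\frac{ns}{n+2s}}(\Omega_\epsilon)$ with norm at most $C\|\phi\|_X$.

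\textbf{First exponent.} Since $U_{\epsilon,k}$ and $Q$ are bounded, $|h_k|\le C|\phi|$. Because $s\le\frac{2n}{n+2}\le 2^*$, interpolation between $L^s$ and $L^{2^*}$ gives
\[
\|h_k\|_{\frac{2n}{n+2}}\le C\|\phi\|_{\frac{2n}{n+2}}\le C\|\phi\|_X.
\]
Note that we use the interpolation constant on $\mathbb{R}^n$ (through the zero extension), not a volume bound on $\Omega_\epsilon$. A volume bound would blow up as $\epsilon\to0$.

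\textbf{Second exponent.} Apply Hölder with $\frac{n+2s}{ns}=\frac{2}{n}+\frac1s$:
\[
\|h_k\|_{\frac{ns}{n+2s}}\le \bigl(\|U_{P_k}^{p-1}\|_{\frac n2}+\|Q\|_\infty q\|U_{P_k}^{q-1}\|_{\frac n2}\bigr)\|\phi\|_s .
\]
The right-hand side is finite since $\alpha(q-1)\frac n2>n$, which holds because $q>q^*$, and likewise for $p>q$. These norms are translation invariant and, by the remark on scaling, bounded uniformly for $P$ in a compact set.

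Lemma~\ref{control L^s bounded} then yields
\[
\|i_\epsilon^*(h_k)\|_{H_0^1(\Omega_\epsilon)}+\|i_\epsilon^*(h_k)\|_{L^s(\Omega_\epsilon)}\le C\|\phi\|_X.
\]
Summing over $k$ and adding $\|\phi\|_X$ gives $\|L_{\epsilon,P}\phi\|_{X_\epsilon}\le M\|\phi\|_{X_\epsilon}$. Since $L_{\epsilon,P}$ is linear, this bound gives both that $L_{\epsilon,P}$ maps $X_\epsilon$ into itself and that it is continuous.

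There is no real obstacle. The only point needing care is that every constant stays independent of $\epsilon$ and of $P$ in compact sets. This holds because we use the domain-independent estimates of Lemma~\ref{control L^s bounded}, together with weight norms computed on all of $\mathbb{R}^n$, which dominate their restrictions to $\Omega_\epsilon$.
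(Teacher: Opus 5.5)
Your proposal is correct and follows the paper's approach. The paper gives no separate proof for the bounded case; it only says ``As before,'' meaning the proof of Lemma~\ref{continuity 1} is rerun with $i^*$ replaced by $i_\epsilon^*$ and Lemma~\ref{control L^s} replaced by Lemma~\ref{control L^s bounded}. That is exactly what you do, with the same $L^\infty$ bound for the $\frac{2n}{n+2}$ exponent and the same H\"older splitting $\frac{n+2s}{ns}=\frac{2}{n}+\frac{1}{s}$ using $U_{P_k}^{q-1}\in L^{n/2}$.

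One small point: the statement is phrased on $X$ rather than $X_\epsilon$. Your bounds still apply verbatim to any $\phi\in X$ once you restrict $h_k$ to $\Omega_\epsilon$, since norms over $\Omega_\epsilon$ are dominated by the corresponding norms over $\mathbb{R}^n$.
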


\begin{lemma}\label{Compact Bouded}
$\\$

 $Id-L_{\epsilon,P}$ is a compact operator.

\end{lemma}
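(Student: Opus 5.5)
The claim is that $Id-L_{\epsilon,P}=\sum_k i_\epsilon^*\big((f'(U_{\epsilon,k})-g'(U_{\epsilon,k})Q(P_k))\,\cdot\big)$ is compact on $X_\epsilon$. I will follow the proof of Lemma \ref{Compact}, adapted to $\Omega_\epsilon$. Since a finite sum of compact operators is compact, and $Q(P_k)$ is a constant, it suffices to treat one peak $k$ and the single multiplier $g'(U_{\epsilon,k})$; the same argument then handles $f'(U_{\epsilon,k})$. So the task is to show that $\phi\mapsto i_\epsilon^*(g'(U_{\epsilon,k})\phi)$ is compact from $X_\epsilon$ to $X_\epsilon$.

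The first step is to factor this map as $i_\epsilon^*\circ T$, where
\[
T:X_\epsilon\to L^{\frac{2n}{n+2}}(\Omega_\epsilon)\cap L^{\frac{ns}{n+2s}}(\Omega_\epsilon),\qquad T\phi=g'(U_{\epsilon,k})\phi .
\]
By Lemma \ref{control L^s bounded}, $i_\epsilon^*$ is continuous from $L^{\frac{2n}{n+2}}\cap L^{\frac{ns}{n+2s}}$ into $X_\epsilon$. It therefore suffices to prove that $T$ is compact.

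For fixed $\epsilon$ the domain $\Omega_\epsilon$ is bounded, which makes this simpler than the whole-space case. Take a bounded sequence $(\phi_j)$ in $X_\epsilon$. It is bounded in $H^1_0(\Omega_\epsilon)$, and since $\Omega_\epsilon$ is bounded and smooth, the Rellich–Kondrachov theorem gives a subsequence converging strongly in $L^r(\Omega_\epsilon)$ for every $r<2^*$. Both exponents $\frac{ns}{n+2s}<\frac{2n}{n+2}$ are below $2^*$. Moreover $g'(U_{\epsilon,k})$ is bounded on $\mathbb{R}^n$, because $U$ is bounded and $Q(P_k)$ stays in $[\nu,M]$ on compact sets. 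Hence
\[
\|T\phi_j-T\phi\|_{\beta}\le \|g'(U_{\epsilon,k})\|_\infty\|\phi_j-\phi\|_{L^\beta(\Omega_\epsilon)}\to0,\qquad \beta\in\Big\{\tfrac{2n}{n+2},\tfrac{ns}{n+2s}\Big\},
\]
so $T$ is compact. This is exactly the argument used for $T_m$ in Lemma \ref{Compact}, with $B(0,m)$ replaced by $\Omega_\epsilon$. The tail approximation $\|T-T_m\|\to0$ is no longer needed, though it could be reproduced verbatim if one wanted bounds uniform in $\epsilon$.

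The only step needing care is a technical point: the multiplier must be the restriction to $\Omega_\epsilon$ of the whole-space profile (or of $V_{\epsilon,k}$, if the linearization is written in terms of $V$). Both are bounded, since $|U_{\epsilon,k}-V_{\epsilon,k}|\le C\epsilon^\alpha$, so the argument is unchanged in either case. Compactness is only needed for each fixed $\epsilon$ (for the Fredholm alternative), and the uniform invertibility is handled separately, as in Proposition \ref{Uniformity}, so I expect no real obstacle here.
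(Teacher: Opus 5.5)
Your proof is correct and is essentially the paper's argument. The paper gives no separate proof here and simply refers back to Lemma~\ref{Compact}: factor through $i_\epsilon^*$ (continuous by Lemma~\ref{control L^s bounded}), use that the multiplier $g'(U_{\epsilon,k})$ is bounded, and apply Rellich's theorem. Your observation that, for fixed $\epsilon$, $\Omega_\epsilon$ is bounded (so Rellich--Kondrachov on $H^1_0(\Omega_\epsilon)$ applies directly, and the truncation $T_m$ and the tail estimate $\|T-T_m\|\to0$ become unnecessary) is a valid simplification.
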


\begin{lemma}

Consider two compact sets $E_1$ and $E_2$  of $~\mathbb{R}^{n}$ such that $d(E_1,E_2)>0$ and set $E:=E_1\times E_2$. 
Denote $\tau_{\epsilon,P}:\phi \in X\mapsto \phi(. +\frac{P}{\epsilon}) $. One has
\begin{itemize}
   
    \item Consider a sequences $(\epsilon_m)\in (\mathbb{R}_+^*)^\mathbb{N}$ and $P_m \in E^\mathbb{N}$ such that $\epsilon_m \rightarrow 0$ and $P_m \rightarrow (A_1,A_2)$. Then, $||\tau_{\epsilon_m,P_{m,k}}\Pi_{\epsilon_m,P_m}\tau_{\epsilon_m,P_{m,k}}^{-1}\eta - \Pi_{\infty,A_k}\eta||_X\rightarrow0$  where $\Pi_{\infty,A_k}$ is the orthogonal projection on $ K_{\infty,A_k}^{\perp}$ where  $K_{\infty,A_k}=Span\{\frac{\partial U_{A_k}}{\partial x_j},j\in\{1,...,n\}\}$.
     \item There exists $\epsilon_0 >0$, there exists $C>0$ such that $\forall 0<\epsilon <\epsilon_0,\forall P\in E, ||\Pi_{\epsilon,P}||_{X_{\epsilon}}\leq C$. 
    \item There exists $\epsilon_0$ such that for all $\epsilon <\epsilon_0$, $P \mapsto \Pi_{\epsilon,P}$ is continuous.
    
    \end{itemize}

\end{lemma}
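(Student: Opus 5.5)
We express $\Pi_{\epsilon,P}$ explicitly, as in the proof of Lemma \ref{Projection prop}, but now in $X_\epsilon$ with the projected kernel functions. We write $K_{\epsilon,P}=\mathrm{Span}\{\psi_{\epsilon,P_k,j}\}$ and let $G_{\epsilon,P}$ be the Gram matrix $\bigl(\langle\psi_{\epsilon,P_k,j}|\psi_{\epsilon,P_{k'},j'}\rangle_\epsilon\bigr)$. Then, for $\phi\in X_\epsilon$,
\[
\Pi_{\epsilon,P}\phi=\phi-\sum_{k,j}c_{\epsilon,P,k,j}\,\psi_{\epsilon,P_k,j},\qquad c_{\epsilon,P}=G_{\epsilon,P}^{-1}\bigl(\langle\psi_{\epsilon,P_k,j}|\phi\rangle_\epsilon\bigr)_{k,j}.
\]
The whole proof then reduces to showing that this matrix and these functions are uniformly close to their whole-space counterparts.

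The key input is Lemma \ref{Convergence Projection}: $\|\psi_{\epsilon,P_k,j}-\partial_jU_{\epsilon,P_k}\|_X\to0$. The lemma is stated along sequences, but by compactness of $E$ and a contradiction argument it yields $\sup_{P\in E}\|\psi_{\epsilon,P_k,j}-\partial_jU_{\epsilon,P_k}\|_X\to0$ as $\epsilon\to0$. Since the inner product of $X_\epsilon$ coincides with that of $D^{1,2}(\mathbb{R}^n)$ after extension by zero, we get $G_{\epsilon,P}=G^{U}_{\epsilon,P}+o(1)$ uniformly on $E$. Here $G^U_{\epsilon,P}$ is the Gram matrix of the $\partial_jU_{\epsilon,P_k}$. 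As shown in Lemma \ref{Projection prop}, $G^U_{\epsilon,P}$ has diagonal blocks $\lambda(P_k)\mathrm{Id}_n$ with $\lambda$ continuous and positive on $E$, and off-diagonal blocks tending to $0$ because $|P_1-P_2|/\epsilon\ge d(E_1,E_2)/\epsilon\to\infty$. A Neumann series around the block diagonal then gives invertibility of $G_{\epsilon,P}$ with $\|G_{\epsilon,P}^{-1}\|\le C$ for $\epsilon<\epsilon_0$, uniformly in $P\in E$. The $\psi$'s are bounded in $X$, so $|c_{\epsilon,P}|\le C\|\phi\|_{X_\epsilon}$, and therefore $\|\Pi_{\epsilon,P}\|_{X_\epsilon}\le C$. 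This is the second point.

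For continuity in $P$ at fixed $\epsilon$, Lemma \ref{Continuity P} gives continuity of $P\mapsto\psi_{\epsilon,P_k,j}\in X_\epsilon$. Hence the entries of $G_{\epsilon,P}$ are continuous, and so is $G^{-1}_{\epsilon,P}$, since inversion is continuous on invertible matrices. It follows that $P\mapsto\Pi_{\epsilon,P}$ is continuous in operator norm: the difference of two projections is a finite-rank operator whose coefficients depend continuously on $P$. This is the third point.

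For the first point we conjugate by the translation. We have $\tau\psi_{\epsilon_m,P_{m,k},j}=\partial_jU_{P_{m,k}}(\cdot-(P_{m,k}-P_{m,k_0})/\epsilon_m)+o(1)$ in $X$, using Lemma \ref{Convergence Projection} and the fact that $\tau$ is an isometry of $X$. For $k=k_0$ this converges to $\partial_jU_{A_{k_0}}$ in $X$ by the continuity used in Lemma \ref{ContinuityP}. For $k\neq k_0$ the function escapes to infinity, so its pairing with any fixed $\eta\in C_c^\infty$ tends to $0$. The Gram matrix converges to the block-diagonal matrix with blocks $\lambda(A_k)\mathrm{Id}_n$. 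Hence the coefficients satisfy $\tilde c_{k,j}\to0$ for $k\ne k_0$ and converge to the whole-space coefficients for $k=k_0$. Combined with the uniform bound on the $\psi$'s, this gives $\tau\Pi_{\epsilon_m,P_m}\tau^{-1}\eta\to\Pi_{\infty,A_{k_0}}\eta$ in $X$.

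The main obstacle is the uniformity of Lemma \ref{Convergence Projection} in $P$; the passage from sequences to a uniform statement via compactness handles it. A secondary point is that $\tau^{-1}\eta$ need not be supported in $\Omega_{\epsilon_m}$. However, $P_{m,k_0}/\epsilon_m$ stays at distance $\gtrsim1/\epsilon_m$ from $\partial\Omega_{\epsilon_m}$, since $A_{k_0}\in\Omega$. So for $m$ large, $\operatorname{supp}\tau^{-1}\eta\subset\Omega_{\epsilon_m}$, and the formula applies.
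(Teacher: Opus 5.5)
Your proposal is correct and takes essentially the same route as the paper. The paper's proof consists only of invoking Lemma~\ref{Continuity P} and Lemma~\ref{Convergence Projection} on top of the explicit Gram-matrix formula for the projection from Lemma~\ref{Projection prop}. You have written out the details it leaves implicit: the compactness upgrade of Lemma~\ref{Convergence Projection} to a bound uniform in $P$, the Neumann-series inversion of $G_{\epsilon,P}$, and the $L^s$ control of the coefficients. You also use, correctly, a hypothesis the statement omits: $E_1,E_2$ must be compact subsets of $\Omega$ rather than of $\mathbb{R}^n$, since both Lemma~\ref{Convergence Projection} and the inclusion $\operatorname{supp}\tau^{-1}\eta\subset\Omega_{\epsilon_m}$ require the limit points $A_k$ to lie in $\Omega$.
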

\begin{proof}
It suffices to use the lemma \ref{Continuity P} and \ref{Convergence Projection}.
\end{proof}
As before, the following result show that $\tilde{L}_{\epsilon,P}$ is invertible if we consider the convenient spaces. 
\begin{proposition}
    
Consider two compact sets $E_1$, $E_2$ of $\Omega$  such that $d(E_1,E_2)>0$ and set $E:=E_1\times E_2$. There exists $C$ such that

$$\forall P\in E, \epsilon <\epsilon_0,\forall\phi \in K_{\epsilon,P}^\perp, ||\tilde{L}_{\epsilon,P} \phi||_X \geq C||\phi||_X.$$

\end{proposition}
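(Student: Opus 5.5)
We argue by contradiction, following the scheme of Proposition \ref{Uniformity}. Suppose there are sequences $\epsilon_m\to0$, $P_m\in E$ with $P_{m,k}\to A_k\in E_k$, and $\phi_m\in K_{\epsilon_m,P_m}^\perp\subset X_{\epsilon_m}$ with $\|\phi_m\|_X=1$ and $\|\tilde L_{\epsilon_m,P_m}\phi_m\|_X\le 1/m$. We extend $\phi_m$ by zero outside $\Omega_{\epsilon_m}$, so that $\phi_m\in X$ with the same norm. For a fixed peak $k_0$ we set $\tilde\phi_{m,k_0}=\phi_m(\cdot+P_{m,k_0}/\epsilon_m)$. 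By reflexivity of $X$ we extract a weak limit $\psi_{k_0}\in X$. Since $A_{k_0}\in\Omega$ has positive distance to $\partial\Omega$, the translated domains $\Omega_{\epsilon_m}-P_{m,k_0}/\epsilon_m$ exhaust $\mathbb{R}^n$. Hence every $\eta\in C_c^\infty(\mathbb{R}^n)$ satisfies $\tau^{-1}\eta\in H_0^1(\Omega_{\epsilon_m})$ for $m$ large, and we may test with it.

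The first step is to identify the limit equation. Testing $\tau\tilde L_{\epsilon_m,P_m}\phi_m$ against $\eta$, the definition of $i_{\epsilon}^*$ gives
\[
\langle i_\epsilon^*(h)|\Pi_{\epsilon_m,P_m}\tau^{-1}\eta\rangle_{\epsilon_m}=\int_{\Omega_{\epsilon_m}} h\,\Pi_{\epsilon_m,P_m}\tau^{-1}\eta ,
\]
exactly as in the whole space. We then use the lemma preceding the statement: $\tau\Pi_{\epsilon_m,P_m}\tau^{-1}\eta\to\Pi_{\infty,A_{k_0}}\eta$ in $X$. Its proof relies on Lemma \ref{Convergence Projection}, namely $\psi_{\epsilon,P,j}\to\partial_jU_{\epsilon,P}$ in $X$, so the projected kernel is asymptotically the whole-space one. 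With these two facts, the same dominated-convergence and compactness arguments as in Proposition \ref{Uniformity} apply without change. The relevant ingredients are Lemma \ref{ContinuityP}, the compactness of multiplication by $f'(U_A)-Q(A)g'(U_A)$ from $X$ to $L^{\frac{2n}{n+2}}\cap L^{\frac{ns}{n+2s}}$, and the vanishing of the cross terms for $k\neq k_0$. They give
\[
\psi_{k_0}=\Pi_{\infty,A_{k_0}}i^*\bigl((f'(U_{A_{k_0}})-Q(A_{k_0})g'(U_{A_{k_0}}))\psi_{k_0}\bigr).
\]
The orthogonality $\langle\phi_m|\psi_{\epsilon_m,k_0,j}\rangle_{\epsilon_m}=0$ passes to the limit thanks to Lemma \ref{Convergence Projection}, so $\psi_{k_0}\in K_{\infty,A_{k_0}}^\perp$. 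Self-adjointness and Lemma \ref{non-degeneracy of the solution} then give $L_{\infty,A_{k_0}}\psi_{k_0}=0$, and therefore $\psi_{k_0}=0$.

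The second step is to contradict $\|\phi_m\|_X=1$. We write
\[
1\le \|\tilde L_{\epsilon_m,P_m}\phi_m\|_X+\bigl\|\Pi_{\epsilon_m,P_m}\textstyle\sum_k i_{\epsilon_m}^*\bigl((f'(U_{\epsilon_m,k})-Q(P_{m,k})g'(U_{\epsilon_m,k}))\phi_m\bigr)\bigr\|_X .
\]
The projections $\Pi_{\epsilon,P}$ are uniformly bounded, and Lemma \ref{control L^s bounded} gives $\epsilon$-independent bounds on $i_\epsilon^*$. It therefore suffices to show that $\|(f'(U_{\epsilon_m,k})-Q(P_{m,k})g'(U_{\epsilon_m,k}))\phi_m\|_\beta\to0$ for $\beta\in\{\frac{2n}{n+2},\frac{ns}{n+2s}\}$. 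After translation this norm equals $\|(f'(U_{P_{m,k}})-Q(P_{m,k})g'(U_{P_{m,k}}))\tilde\phi_{m,k}\|_\beta$. Lemma \ref{ContinuityP} lets us replace $P_{m,k}$ by $A_k$ up to $o(1)$. Compactness of the multiplication operator, together with $\tilde\phi_{m,k}\rightharpoonup0$, then sends this norm to $0$, which gives the contradiction.

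The main obstacle is the first step, specifically the interplay between the varying domains and the projection. The kernel $K_{\epsilon,P}$ is now spanned by the projected functions $\psi_{\epsilon,k,j}$ rather than by $\partial_jU_{\epsilon,k}$. The orthogonality is taken in $H_0^1(\Omega_\epsilon)$, while the limit objects live in $D^{1,2}(\mathbb{R}^n)$. To handle this, I would first note that all quantities are extended by zero. I would then apply Lemma \ref{Convergence Projection} to control the Gram matrix of the $\psi_{\epsilon,k,j}$, which makes it uniformly close to that of the $\partial_jU_{\epsilon,k}$. Finally, I would check that the boundary layer near $\partial\Omega_{\epsilon}$ is irrelevant, since $U_{\epsilon,k}$ and its derivatives are $O(\epsilon^{\alpha})$ there and $\eta$ has compact support.
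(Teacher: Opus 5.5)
Your proposal is correct and takes essentially the paper's route: the paper also reruns the whole-space contradiction argument of Proposition~3.7 and records only the same modifications you make, namely that $X_\epsilon\subset X$ (so reflexivity applies), that $\tau^{-1}\eta\in X_{\epsilon_m}$ for $m$ large because $A_{k_0}\in\Omega$, and that orthogonality to the projected kernel passes to the limit. The one small difference is in that last step: you get $\langle\psi_{k_0}|\partial_jU_{A_{k_0}}\rangle=0$ only asymptotically, from $\psi_{\epsilon,P,j}\to\partial_jU_{\epsilon,P}$ in $X$, whereas the paper uses that $\psi_{\epsilon_m,k_0,j}-\partial_jU_{\epsilon_m,P_{m,k_0}}$ is harmonic in $\Omega_{\epsilon_m}$, which gives the exact identity $\langle\phi_m|\partial_jU_{\epsilon_m,P_{m,k_0}}\rangle_{D^{1,2}(\mathbb{R}^n)}=\langle\phi_m|\psi_{\epsilon_m,k_0,j}\rangle_{\epsilon_m}=0$ for $\phi_m\in H_0^1(\Omega_{\epsilon_m})$; either version suffices.
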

\begin{proof}
    
$\\$$\\$
The proof is quite similar to the proof of the proposition \ref{Uniformity}. We only indicate the most important modifications. 
$\\$
The most important differences are 
\begin{itemize}
    \item We work with $X_{\epsilon}$. Nonetheless, $X_{\epsilon}\subset X$. Thus, we can apply the reflexivity as before.
    \item As before, we need to apply $\Pi_{\epsilon,P_{m,k_0}}$ to $\tau^{-1}\eta$ with $\eta \in C_c^{\infty}(\mathbb{R}^n)$ but $\tau^{-1}\eta$ does not belong necessarily to $X_{\epsilon}$. But there exists $m_1$ such that $\forall m >m_1, supp(\eta)+\frac{P_{m,{k_0}}}{\epsilon_m}\subset \Omega_{\epsilon_m}  $.
Indeed, $\Omega_{\epsilon_m}=\frac{\Omega}{\epsilon_m}$. 

In addition, because $supp(\eta)$ is compact,  $$\left|\left|\epsilon_m\left( supp(\eta)+\frac{P_{m,{k_0}}}{\epsilon_m}-A_{k_0}\right)\right|\right|_{L^\infty(supp(\eta))}\rightarrow0$$
We conclude because $A_{k_0}\in \Omega$ and $\Omega$ is open. 
Then for $m>m_1$, $\tau^{-1}\eta \in X_{\epsilon}$. 
 
 \item In the bounded case, $\psi_{\epsilon_m,k_0,j}$ replaces $\frac{\partial U_{\epsilon_m,P_{m,k_0}}}{\partial x_j}$. But 
On $\Omega_{\epsilon}, \Delta(\psi_{{k_0},j,\epsilon}-\frac{\partial U_{\epsilon,P_{m,k_0}}}{\partial x_j})=0$. Then, 
$$\left\langle \phi_{m}\big| \frac{\partial U_{\epsilon_m,P_{m,k_0}}}{\partial x_j}\right\rangle_{X}=\left\langle \phi_{m}\big| \psi_{\epsilon_m,{k_0},j}\right\rangle_{X_{\epsilon_m}}=0$$
Then as before,$$\left\langle \phi_{m}\big| \psi_{\epsilon_m,{k_0},j}\right\rangle_{X_{\epsilon_m}}\rightarrow\left\langle \psi_{{k_0}}\big| \frac{\partial U_{A_{k_0}}}{\partial x_j}\right\rangle_{X}=0 $$
 
 \end{itemize}

\end{proof}
As before, we deduce the lemma 
\begin{corollary}\label{inversibility bounded}
$\\$
Consider a compact set $E_1\times E_2$ of $\Omega^{2}$ such that $d(E_1,E_2)>0$,
$\tilde{L}_{\epsilon,P}$ is invertible for all $P \in E$ and its inverse is continuous. In addition, there exists $C$ a constant $\|\tilde{L}^{-1}_{\epsilon,P}\|_{K_{\epsilon,P}^\perp}\leq C$ for all $\epsilon <\epsilon_0$ and $P \in E$. 

\end{corollary}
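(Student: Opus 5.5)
This is the bounded-domain analogue of Corollary \ref{inversibility}, and I would follow its proof. The argument has three steps: Fredholm structure, injectivity, and a uniform bound on the inverse.

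\textbf{Step 1: Fredholm form.} By Lemma \ref{Compact Bouded}, $L_{\epsilon,P}=\mathrm{Id}-K_{\epsilon,P}$ with $K_{\epsilon,P}$ compact on $X_\epsilon$. Then
\[
\tilde L_{\epsilon,P}=\Pi_{\epsilon,P}L_{\epsilon,P}
\]
restricted to $K_{\epsilon,P}^\perp$ equals $\mathrm{Id}-\Pi_{\epsilon,P}K_{\epsilon,P}$. Indeed, $\Pi_{\epsilon,P}\phi=\phi$ for $\phi\in K_{\epsilon,P}^\perp$. The operator $\Pi_{\epsilon,P}K_{\epsilon,P}$ is compact, because $\Pi_{\epsilon,P}$ is bounded; this follows from the preceding lemma on projections, which gives $\|\Pi_{\epsilon,P}\|_{X_\epsilon}\le C$. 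Moreover, $K_{\epsilon,P}^\perp$ is closed in $X_\epsilon$, since it is an intersection of kernels of the continuous functionals $v\mapsto\langle v|\psi_{\epsilon,k,j}\rangle_\epsilon$. Hence $K_{\epsilon,P}^\perp$ is a Banach space, and the restricted operator maps it into itself.

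\textbf{Step 2: injectivity and bounded inverse.} The proposition just proved gives, for $\epsilon<\epsilon_0$ and $P\in E$,
\[
\|\tilde L_{\epsilon,P}\phi\|_X\ge C\|\phi\|_X .
\]
This holds for every $\phi\in K_{\epsilon,P}^\perp$, and on $X_\epsilon$ the norms $\|\cdot\|_X$ and $\|\cdot\|_{X_\epsilon}$ coincide. So $\tilde L_{\epsilon,P}$ is injective. By the Fredholm alternative, an injective operator of the form identity minus compact is surjective. Its inverse is then continuous by the open mapping theorem. The same inequality gives directly
\[
\|\tilde L_{\epsilon,P}^{-1}\|\le 1/C ,
\]
uniformly in $\epsilon<\epsilon_0$ and $P\in E$.

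\textbf{Main obstacle.} The only delicate point is already absorbed in the preceding proposition: the lower bound must be uniform in $P$ and $\epsilon$. That uniformity relied on the projected functions $\psi_{\epsilon,P,j}$ approximating $\partial_j U_{\epsilon,P}$ in $X$, as in Lemma \ref{Convergence Projection}. Here I only need to check two things. First, $L_{\epsilon,P}$ genuinely maps $X_\epsilon$ to $X_\epsilon$, which holds because $i_\epsilon^*$ takes values in $H^1_0(\Omega_\epsilon)$ and Lemma \ref{control L^s bounded} controls the $L^s$ norm. Second, the compactness in Lemma \ref{Compact Bouded} is stated on $X_\epsilon$.
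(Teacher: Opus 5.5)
Your proposal is correct and follows the same route as the paper. The paper deduces this corollary ``as before'' from Corollary \ref{inversibility}: $\tilde L_{\epsilon,P}$ is the identity minus a compact operator on the closed subspace $K_{\epsilon,P}^\perp$, it is injective by the uniform lower bound of the preceding proposition, the Fredholm alternative and the open mapping theorem then give a bounded inverse, and the same lower bound gives $\|\tilde L_{\epsilon,P}^{-1}\|\le 1/C$ uniformly; your extra checks (that $L_{\epsilon,P}$ preserves $X_\epsilon$ via Lemma \ref{control L^s bounded}, and that $K_{\epsilon,P}^\perp$ is closed) are details the paper leaves implicit.
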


$$\\$$

Therefore, the equation can be written as follows:

 $$L_{\epsilon,P}\phi = N_f(\phi) + N_g(\phi) + I_f + I_g+ I_{f'} + I_{g'} $$$$  +\sum i_{\epsilon}^*(g'(V_{\epsilon,k})\phi(Q(P_k)-Q_{\epsilon}))+i_{\epsilon}^*((Q(P_k)-Q_\epsilon) g(V_{\epsilon,k})) +  R_1 +R_2$$

thus  $$\phi = \tilde{L}_{\epsilon,P}^{-1}\Pi_{\epsilon,P}(N_f(\phi) + N_g(\phi) + I_f + I_g+ I_{f'} + I_{g'} $$$$  +\sum i_{\epsilon}^*(g'(V_{\epsilon,k})\phi(Q(P_k)-Q_{\epsilon}))+i_{\epsilon}^*((Q(P_k)-Q_\epsilon) g(V_{\epsilon,k})) +  R_1 +R_2)$$

Thus this equation takes the form: 
$\phi = T_{\epsilon,P}(\phi)$.
To apply the Banach fixed-point theorem, one needs some controls on the terms which play a role in this equation. That is the goal of the next section.
\subsubsection{Control of terms}

\begin{lemma}\label{Stability3}
$\\$
Let $E_1$ and $E_2$ two compacts such that $d(E_1,E_2)>0$ and set $E=E_1\times E_2$.
There exists $N >0$ such that for $\epsilon$ small enough, $B(0,N\epsilon)$ is stable under $T_{\epsilon,P}$ for all $P \in E$.
$$\\$$
\end{lemma}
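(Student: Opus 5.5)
The plan mirrors the proof of Proposition~\ref{Stability2}. By Corollary~\ref{inversibility bounded} and the uniform bound on $\Pi_{\epsilon,P}$, there are $C_0$ and $\epsilon_0$ with $\|\tilde L_{\epsilon,P}^{-1}\Pi_{\epsilon,P}\|\le C_0$ for all $\epsilon<\epsilon_0$ and $P\in E$. It therefore suffices to show that, for $\|\phi\|_{X_\epsilon}\le N\epsilon$, the right-hand side
\[
N_f(\phi)+N_g(\phi)+I_f+I_g+I_{f'}+I_{g'}+\sum i_\epsilon^*(g'(V_{\epsilon,k})\phi(Q(P_k)-Q_\epsilon))+\sum i_\epsilon^*((Q(P_k)-Q_\epsilon)g(V_{\epsilon,k}))+R_1+R_2
\]
has $X_\epsilon$-norm at most $C(\|\phi\|^q+o(1)\|\phi\|+\epsilon)$, with $C$ independent of $P$ and $\epsilon$. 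Taking $N=2CC_0$ and then $\epsilon$ small closes the argument, since $q>1$.

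All norms of $i_\epsilon^*(\cdot)$ are controlled by Lemma~\ref{control L^s bounded} through the $L^{\frac{2n}{n+2}}(\Omega_\epsilon)$ and $L^{\frac{ns}{n+2s}}(\Omega_\epsilon)$ norms, with constants independent of $\epsilon$. Moreover, since $X_\epsilon$ embeds in $X$ isometrically, every Hölder/interpolation step from Section 3 can be repeated verbatim.

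The steps, in order, are as follows.

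\textbf{(i) Nonlinear terms.} For $N_f$ and $N_g$, the pointwise inequalities $|g(x_1)-g(x_2)-g'(x_2)(x_1-x_2)|\le C|x_1-x_2|^q$ (and the analogue for $f$) give $C\|\phi\|^q$ exactly as before.

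\textbf{(ii) Reduction to the whole-space estimates.} For the terms $I_f,I_g,I_{f'},I_{g'}$, the term $i_\epsilon^*((Q(P_k)-Q_\epsilon)g(V_{\epsilon,k}))$, and the $\phi$-linear $Q$-difference term, replace $V_{\epsilon,k}$ by $U_{\epsilon,k}$. The key input is $0\le V_{\epsilon,k}\le U_{\epsilon,k}$ in $\Omega_\epsilon$, which holds by the maximum principle since $U-V$ is harmonic with nonnegative boundary data. Positivity of $V$ is not needed because $f,g$ use positive parts. Hence every integrand is dominated pointwise by the corresponding whole-space quantity built from $U$, or by its difference with the $U$-version. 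The only care needed is for expressions like $(V_1+V_2)^q-V_1^q-V_2^q$, which are not monotone. For these, apply Lemma~\ref{Control puissance q} directly to the $V$'s and then bound each resulting product by the same product of $U$'s. This gives $o(\epsilon)$ for $I_f,I_g$, $o(1)\|\phi\|$ for $I_{f'},I_{g'}$, $C\epsilon$ for the $Q$-difference source term (using $\beta(\alpha q-1)>n$), and $o(1)\|\phi\|$ for the linear $Q$-term.

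\textbf{(iii) Projection errors $R_1,R_2$.} This is the main obstacle, since $U_{\epsilon,k}-V_{\epsilon,k}$ is uniformly $O(\epsilon^\alpha)$ but is spread over $\Omega_\epsilon$, whose measure is of order $\epsilon^{-n}$. For $R_1$, write $|g(V)-g(U)|\le qU^{q-1}|U-V|\le C\epsilon^{\alpha}U^{q-1}$, and similarly for $f$ with $U^{p-1}$. Then
\[
\|U^{q-1}\epsilon^\alpha\|_{L^\beta(\Omega_\epsilon)}\le C\epsilon^\alpha\Big(\int_{B(0,l/\epsilon)}(1+|x|)^{-\alpha(q-1)\beta}\Big)^{1/\beta}.
\]
If $\alpha(q-1)\beta>n$, this is $O(\epsilon^\alpha)$. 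Otherwise it is $O(\epsilon^{\alpha+(q-1)\alpha-n/\beta})=O(\epsilon^{\alpha q-n/\beta})$, up to a logarithm in the borderline case. Since $\beta(\alpha q-1)>n$ for both $\beta=\frac{2n}{n+2}$ and $\beta=\frac{ns}{n+2s}$, we get $\alpha q-n/\beta>1$, so $R_1=o(\epsilon)$. For $R_2$, use $|g'(V)-g'(U)|\le C|U-V|^{q-1}\le C\epsilon^{\alpha(q-1)}$, together with the analogous bound for $f'$ (locally Lipschitz, since $p<2$ so $p-1<1$ and the same Hölder bound applies). By Hölder, $\|(\cdot)\phi\|_{\frac{ns}{n+2s}}\le \|(\cdot)\|_{L^{n/2}(\Omega_\epsilon)}\|\phi\|_s$. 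Interpolating the bound with $U^{q-1}$ decay, that is, using $\min(\epsilon^{\alpha(q-1)},CU^{q-1})$, the $L^{n/2}$ norm over $\Omega_\epsilon$ is $o(1)$. The $L^{\frac{2n}{n+2}}$ part is handled in the same way with the exponent $t$ used in Section 3. Hence $R_2=o(1)\|\phi\|$.

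Collecting (i)–(iii) gives $\|T_{\epsilon,P}\phi\|\le C_0C(\|\phi\|^q+o(1)\|\phi\|+\epsilon)\le N\epsilon$ for $\epsilon$ small, uniformly in $P\in E$.
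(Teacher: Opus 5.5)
Your proof is correct and has the same skeleton as the paper's. Both use the uniform bound on $\tilde L_{\epsilon,P}^{-1}\Pi_{\epsilon,P}$, the term-by-term estimates of Proposition~\ref{Stability2}, and control of $R_1,R_2$ through $|U_{\epsilon,k}-V_{\epsilon,k}|\le C\epsilon^{\alpha}$ on a domain of measure $O(\epsilon^{-n})$.

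You differ in how the $V$-terms are brought back to the whole space. The paper proves Lemma~\ref{Control bounded}: $g(V_{\epsilon,k})-g(U_{\epsilon,k})$ is $o(\epsilon)$ and $(g'(V_{\epsilon,k})-g'(U_{\epsilon,k}))\phi$ is $o(1)\|\phi\|_X$, with the $f$-analogues. It then says one may ``replace $V$ by $U$''. For $I_f,I_g,I_{f'},I_{g'}$ this tacitly also requires comparing $g(\sum_k V_{\epsilon,k})$ with $g(\sum_k U_{\epsilon,k})$, which is never written. Your pointwise domination $V_{\epsilon,k}^+\le U_{\epsilon,k}$ imports the whole-space bounds directly, with no perturbative step. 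That is cleaner for a pure upper-bound statement like this one.

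Your $R_1$ estimate also differs. Using $|g(V)-g(U)|\le qU^{q-1}|U-V|$ and the decay of $U^{q-1}$, you get $O(\epsilon^{\alpha q-n/\beta})$ for both exponents from the single condition $\beta(\alpha q-1)>n$. The paper instead uses the crude $L^\infty\times|\Omega_\epsilon|^{1/\beta}$ bound for $\beta=\frac{2n}{n+2}$, which needs $\frac n2-3>1$, i.e.\ $n\ge 9$, at that spot.

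What the paper's route buys is twofold. It uses only $|U-V|\le C\epsilon^\alpha$, with no order information. And Lemma~\ref{Control bounded} is reused in Lemma~\ref{Brouwer3}, where genuine $o(\epsilon)$ \emph{differences} are needed to extract the leading term $\epsilon\,\partial_jQ(P_k)$; domination alone would not give that.

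Two small corrections.
\begin{itemize}
\item The maximum principle gives only $V_{\epsilon,k}\le U_{\epsilon,k}$, not $V_{\epsilon,k}\ge 0$. Positive parts handle the monotone terms. In the interaction terms, however, at a point where $V_{\epsilon,1}<0\le V_{\epsilon,2}$ you get $|g(V_{\epsilon,1}+V_{\epsilon,2})-g(V_{\epsilon,2})|\le qV_{\epsilon,2}^{q-1}V_{\epsilon,1}^-\le C\epsilon^{\alpha}U_{\epsilon,2}^{q-1}$, using $V^-\le U-V$. This is not a product of $U$'s. It is an $R_1$-type term, hence $o(\epsilon)$ by your step (iii). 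Similarly, the corresponding pieces of $I_{f'},I_{g'}$ are $R_2$-type, hence $o(1)\|\phi\|_X$.
\item $f'$ is $(p-1)$-H\"older, not locally Lipschitz. The H\"older bound is in fact the one you use.
\end{itemize}
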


In order to simplify the proof, we need the result
\begin{lemma}\label{Control bounded}
We have
$$\|i_{\epsilon}^*(Q_{\epsilon}(g(V_{\epsilon,k})-g(U_{\epsilon,k})))\|_X=o(\epsilon) \quad \text{and}\quad \|i^*_{\epsilon}(f(V_{\epsilon,k})-f(U_{\epsilon,k}))\|_X=o(\epsilon)  $$
and 
$$\|i_{\epsilon}^*((g'(V_{\epsilon,k})-g'(U_{\epsilon,k}))\phi)\|_X\leq o(1)\|\phi\|_X\quad \text{and}\quad \|i_{\epsilon}^*((f'(V_{\epsilon,k})-f'(U_{\epsilon,k}))\phi)\|_X\leq o(1)\|\phi\|_X$$ 
\end{lemma}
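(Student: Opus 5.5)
The plan is to reduce all four estimates to the single pointwise bound $|U_{\epsilon,P_k}-V_{\epsilon,P_k}|\le C\epsilon^{\alpha}$ on $\Omega_\epsilon$ from Section 4.2, together with the uniform bounds on $i_\epsilon^*$ from Lemma \ref{control L^s bounded}. With $\beta=\frac{2n}{n+2}$ and $\beta=\frac{ns}{n+2s}$, Lemma \ref{control L^s bounded} gives
\[
\|i_\epsilon^*(h)\|_X\le C\big(\|h\|_{L^{\frac{2n}{n+2}}(\Omega_\epsilon)}+\|h\|_{L^{\frac{ns}{n+2s}}(\Omega_\epsilon)}\big),
\]
with $C$ independent of $\epsilon$. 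So it suffices to bound $L^\beta(\Omega_\epsilon)$ norms of the integrands. We also note that $0\le V_{\epsilon,P}\le U_{\epsilon,P}$ up to the $C\epsilon^\alpha$ error, and $Q$ is bounded.

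For the first two estimates, we apply Lemma \ref{Control puissance q} with $e=q\ge1$, $a=U_{\epsilon,k}$ and $b=V_{\epsilon,k}-U_{\epsilon,k}$. This gives
\[
|g(V_{\epsilon,k})-g(U_{\epsilon,k})|\le C\big(\epsilon^{\alpha q}+\epsilon^{\alpha}U_{\epsilon,k}^{q-1}\big)\quad\text{on }\Omega_\epsilon .
\]
The constant term is integrated over $\Omega_\epsilon\subset B(0,l/\epsilon)$, so $\|\epsilon^{\alpha q}\|_{L^\beta(\Omega_\epsilon)}\le C\epsilon^{\alpha q-n/\beta}$. For $\beta=\frac{2n}{n+2}$ the exponent is $\alpha q-\frac{n+2}{2}$, and for $\beta=\frac{ns}{n+2s}$ it is $\alpha q-2-\frac ns$. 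Both exceed $1$, since $\alpha q>n$, $n\ge9$ and $s>\frac{n}{n-2}$. For the second term we need $U^{q-1}\in L^\beta$, that is $\alpha(q-1)\beta>n$. This may fail, so the safe route is to bound $U_{\epsilon,k}^{q-1}\le C$ and treat it like the constant term: $\epsilon^{\alpha-n/\beta}$ has exponent $\alpha-\frac{n+2}{2}=\frac{n-6}{2}>1$ for $n\ge9$ and $\beta=\frac{2n}{n+2}$. For $\beta=\frac{ns}{n+2s}$ the exponent is $n-4-\frac ns$, which exceeds $1$ since $\frac ns<n-2$. Hence both contributions are $o(\epsilon)$. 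The argument for $f$ is identical with $p$ in place of $q$.

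For the derivative estimates, $e=q-1<1$ and $e=p-1<1$ (because $p<\frac{n+2}{n-2}<2$ when $n\ge9$). The first case of Lemma \ref{Control puissance q} then gives $|g'(V_{\epsilon,k})-g'(U_{\epsilon,k})|\le C\epsilon^{\alpha(q-1)}$ uniformly on $\Omega_\epsilon$. By Hölder with $\frac{n+2}{2n}=\frac1n+\frac12$, or with exponent pair $(\frac n2,s)$ for the $L^{\frac{ns}{n+2s}}$ norm,
\[
\|(g'(V)-g'(U))\phi\|_\beta\le \|g'(V)-g'(U)\|_{L^{\gamma}(\Omega_\epsilon)}\|\phi\|_X .
\]
The Hölder partner exponent $\gamma$ must be chosen so that $\phi$ is controlled by $\|\phi\|_X$ via interpolation between $s$ and $2^*$, for instance $\gamma=\frac n2$ paired with $\phi\in L^s$ and $L^{2^*}$ for the two norms. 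This gives a bound $C\epsilon^{\alpha(q-1)-2}$. Its exponent is positive because $(n-2)(q-1)>2$ follows from $q>q^*$. This is $o(1)$, and likewise for $f'$.

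The main obstacle is that $\Omega_\epsilon$ has volume of order $\epsilon^{-n}$, so a uniform bound alone is not enough. At each step one must check that the power of $\epsilon$ gained from $\epsilon^\alpha$ beats the volume loss $\epsilon^{-n/\beta}$. If one exponent falls short, the fallback is to split $\Omega_\epsilon$ into $B(P_k/\epsilon,\rho/(2\epsilon))$, where $U$ is large, and its complement, where $U\le C\epsilon^{\alpha}$. On the complement, both $U$ and $V$ are $O(\epsilon^\alpha)$, and one uses the decay of $U$ instead of the volume.
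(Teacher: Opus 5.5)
\textbf{There is a genuine gap in your argument.} Your estimates of the $L^{\frac{2n}{n+2}}$ norms, and your treatment of the derivative terms via $\|g'(V_{\epsilon,k})-g'(U_{\epsilon,k})\|_{L^{n/2}(\Omega_\epsilon)}\le C\epsilon^{\alpha(q-1)-2}$, are fine and coincide with the paper's. The gap is in the $L^{\frac{ns}{n+2s}}$ estimate of the cross term $\epsilon^{\alpha}U_{\epsilon,k}^{q-1}$ coming from Lemma \ref{Control puissance q}.

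\textbf{Why the bound $U_{\epsilon,k}^{q-1}\le C$ fails.} This ``safe route'' gives the exponent $\alpha-\frac n\beta=n-4-\frac ns$. The fact $\frac ns<n-2$ only yields $n-4-\frac ns>-2$, not $>1$. In fact the standing constraint $s\le\frac{2n}{n+2}$ forces $\frac ns\ge\frac{n+2}{2}$, hence
\[
n-4-\frac ns\le\frac{n-10}{2}.
\]
This is negative for $n=9$ and at most $1$ for $n\le 12$. So this route does not even give boundedness in the lowest dimensions covered by the theorem. For larger $n$ it would need $s>\frac{n}{n-5}$, which is not among the hypotheses. The same objection applies to the $f$ term with $p$ in place of $q$.

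\textbf{The missing idea.} You must keep the decay of $U$ instead of its sup norm. Since $U_{\epsilon,k}^{q-1}(x)\le C(1+|x-P_k/\epsilon|)^{-\alpha(q-1)}$ and $\Omega_\epsilon\subset B(0,l/\epsilon)$, integration gives $\|U_{\epsilon,k}^{q-1}\|_{L^\beta(\Omega_\epsilon)}\le C\epsilon^{\alpha(q-1)-\frac n\beta}$. Here $\alpha(q-1)\beta<n$ in the admissible range, so the norm does grow, but only at this rate. Hence $\|U_{\epsilon,k}^{q-1}(V_{\epsilon,k}-U_{\epsilon,k})\|_\beta\le C\epsilon^{\alpha q-\frac n\beta}$, which is the paper's bound. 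The exponent $\alpha q-\frac n\beta=(n-2)q-2-\frac ns$ exceeds $1$ precisely by the hypothesis $s>\frac{n}{(n-2)q-3}$.

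\textbf{A smaller slip in the constant term.} The same hypothesis $s>\frac{n}{(n-2)q-3}$ is what you need for the term $\epsilon^{\alpha q-\frac n\beta}$ coming from $|V_{\epsilon,k}-U_{\epsilon,k}|^q$. The reasons you cite, $\alpha q>n$ and $s>\frac{n}{n-2}$, only give an exponent $>0$, not $>1$.

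\textbf{Your fallback does not repair this.} Splitting $\Omega_\epsilon$ does not help by itself. The problematic contribution comes from the ball $B(P_k/\epsilon,\rho/(2\epsilon))$, whose volume is still of order $\epsilon^{-n}$. What is needed there is exactly the integration of the decay described above.
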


\begin{proof}[Proof of Lemma \ref{Control bounded}]

We only prove these results for $g$.

Firstly, $\Omega$ is bounded, then we can find $l>0$ such that $\Omega\subset B(0,l)$ then, $\Omega_{\epsilon}\subset B(0,\frac{l}{\epsilon})$ then $mes(\Omega_{\epsilon})\leq M\epsilon^{-n}$. Then, we have

\smallskip

\begin{itemize}
    \item If $\beta = \frac{2n}{n+2}$,

$$ \|g(V_{\epsilon,k})-g(U_{\epsilon,k})\|_\beta\leq C(\|(|V_{\epsilon,k}-U_{\epsilon,k}|)^q\|_\beta+\|U_{\epsilon,k}^{q-1}(V_{\epsilon,k}-U_{\epsilon,k})\|_\beta) $$$$\leq  C mes(\Omega_{\epsilon})^{\frac{1}{\beta}}(\|V_{\epsilon,k}-U_{\epsilon,k}\|_\infty^q+\|V_{\epsilon,k}-U_{\epsilon,k}\|_\infty)$$$$\leq C\epsilon^{-\frac{n}{\beta}}(\epsilon^{q\alpha}+\epsilon^{\alpha})\leq C'\epsilon^{\alpha-\frac{n}{\beta}}.$$

But, $\alpha -\frac{n}{\beta} =(n-2)-\frac{n+2}{2}=\frac{n}{2}-3>1$ because $n\geq9$. Then, $$\|g(V_{\epsilon,k})-g(U_{\epsilon,k})\|_\beta = o(\epsilon).$$

Furthermore, 
$$\|(g'(V_{\epsilon,k})-g'(U_{\epsilon,k}))\phi\|_\frac{2n}{n+2}\leq C\|\phi\|_X\|g'(V_{\epsilon,k})-g'(U_{\epsilon,k})\|_{L^\frac{n}{2}(\Omega_{\epsilon})}$$

But,  $$\|g'(V_{\epsilon,k})-g'(U_{\epsilon,k})\|_{L^\frac{n}{2}}\leq \|g'(V_{\epsilon,k})-g'(U_{\epsilon,k})\|_{\infty}mes(\Omega_\epsilon)^{\frac{2}{n}}$$

and

$$|g'(V_{\epsilon,k})-g'(U_{\epsilon,k})|\leq |V_{\epsilon,k}-U_{\epsilon,k}|^{q-1}\leq \|V_{\epsilon,k}-U_{\epsilon,k}\|_{L^\infty(\Omega_{\epsilon})}^{q-1} = o(\epsilon^2)$$
and
$$mes(\Omega_{\epsilon})\leq C diam(\Omega_{\epsilon})^{n}\leq C'\epsilon^{-n}$$

Conclusion: 
$$\|(g'(V_{\epsilon,k})-g'(U_{\epsilon,k}))\phi\|_\frac{2n}{n+2}=o(1)\|\phi\|_X$$

 $$\\$$

 \item If $\beta =\frac{ns}{n+2s}$, 
$$ \|(|V_{\epsilon,k}-U_{\epsilon,k}|)^q\|_\beta\leq \epsilon^{q\alpha-\frac{n}{\beta}}$$
In addition, because $\|U_{\epsilon,k}^{q-1}\|_\beta\ \leq  C\epsilon^{\alpha(q-1)-\frac{n}{\beta}}$, $$\|U_{\epsilon,k}^{q-1}(V_{\epsilon,k}-U_{\epsilon,k})\|_\beta\leq \|U_{\epsilon,k}^{q-1}\|_\beta\|V_{\epsilon,k}-U_{\epsilon,k}\|_\infty\leq C\epsilon^{(\alpha q)-\frac{n}{\beta}}$$

But, $\alpha q - \frac{n}{\beta}>1$  by hypothesis. 

Furthermore,
$$\|(g'(V_{\epsilon,k})-g'(U_{\epsilon,k}))\phi\|_\frac{ns}{n+2s}\leq C\|\phi\|_X\|g'(V_{\epsilon,k})-g'(U_{\epsilon,k})\|_{L^\frac{n}{2}(\Omega_{\epsilon})}=o(1)$$

\end{itemize}
$\\$

Conclusion, $\|i_{\epsilon}^*(Q_{\epsilon}(g(V_{\epsilon,k})-g(U_{\epsilon,k})))\|_X=o(\epsilon)$

\end{proof}

\begin{proof}[Proof of Lemma \ref{Stability3}]
In some sense, Lemma \ref{Control bounded} allows us to replace $V_{\epsilon,k}$ by $U_{\epsilon,k}$ in the expression of $T_{\epsilon,P}$. Thus, thanks to the same calculations as in the case of $\mathbb{R}^n$, we obtain

     $$\|N_f(\phi) +N_g(\phi)\|_X \leq C\|\phi\|_X^q$$ 
        $$\|\sum i_{\epsilon}^*((Q_{\epsilon} - Q(P_k))g(V_{\epsilon,k}))+I_f+I_g\|_X\leq C\epsilon$$
       $$ \sum \|i_{\epsilon}^*((Q_{\epsilon} - Q(P_k))g'(U_{\epsilon,k})\phi)\|_X+\|I_{f'}+I_{g'}\|_X=o(1)\|\phi\|_X$$

$\\$

\noindent Using \ref{Control bounded}, we easily deduce that 
$$\|\sum i_{\epsilon}^*(f(V_{\epsilon,k})-f(U_{\epsilon,k})-Q(P_k)(g(V_{\epsilon,k})-g(U_{\epsilon,k})))\|_X = o(\epsilon)$$

and 
 $$\|\sum i_{\epsilon}^*((f'(V_{\epsilon,k})-f'(U_{\epsilon,k})-[g'(V_{\epsilon,k})-g'(U_{\epsilon,k})]Q(P_k))\phi)\|_X=o(1)\|\phi\|_X.$$

In conclusion, $\|T_{\epsilon,P}(\phi)\|_X \leq C(\|\phi\|_X^q+ o(1)\|\phi\|_X+\epsilon)$ and we conclude easily. 
\end{proof}

\begin{lemma}\label{Contratction3}
$\\$

$T_{\epsilon,P}$ is a contraction on $B(0,N\epsilon)$ with a Lipschitz constant that does not depend on $P$.

$$\\$$

\end{lemma}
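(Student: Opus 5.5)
We follow the proof of Proposition \ref{Lipschitz} essentially verbatim, now with $i_\epsilon^*$, $V_{\epsilon,k}$ and $X_\epsilon$ in place of $i^*$, $U_{\epsilon,k}$ and $X$. The two new ingredients are the remainder term $R_2$ and the bound on $\tilde L_{\epsilon,P}^{-1}\Pi_{\epsilon,P}$.

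For $\phi_1,\phi_2\in \bar B(0,N\epsilon)\cap K_{\epsilon,P}^\perp$, the terms $I_f$, $I_g$, $R_1$ and $\sum i_\epsilon^*((Q(P_k)-Q_\epsilon)g(V_{\epsilon,k}))$ do not depend on $\phi$, so they cancel in the difference. We are left with
\[
T_{\epsilon,P}\phi_1-T_{\epsilon,P}\phi_2=\tilde L_{\epsilon,P}^{-1}\Pi_{\epsilon,P}\Big(N_f(\phi_1)-N_f(\phi_2)+N_g(\phi_1)-N_g(\phi_2)+(I_{f'}+I_{g'})(\phi_1-\phi_2)+\mathcal{Q}(\phi_1-\phi_2)+R_2(\phi_1-\phi_2)\Big),
\]
where $\mathcal Q(\psi)=\sum i_\epsilon^*(g'(V_{\epsilon,k})(Q(P_k)-Q_\epsilon)\psi)$. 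By Corollary \ref{inversibility bounded} and the uniform bound on $\Pi_{\epsilon,P}$, the prefactor has norm at most a constant $C$ that is independent of $P\in E$ and of $\epsilon<\epsilon_0$.

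We estimate each piece as follows.

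\emph{Nonlinear terms.} We use the pointwise inequalities for $f,g$ (valid since $q<2$), as in \cite{MichelettiPistoia2003}. Combined with Lemma \ref{control L^s bounded}, whose constants do not depend on $\epsilon$, and the same Hölder exponents as in the whole-space case, this gives
\[
\|N_f(\phi_1)-N_f(\phi_2)\|+\|N_g(\phi_1)-N_g(\phi_2)\|\le C\big(\|\phi_1-\phi_2\|^q+\|\phi_2\|^{q-1}\|\phi_1-\phi_2\|\big).
\]
The exponent conditions only involve $L^r(\Omega_\epsilon)$ with $s\le r\le 2^*$, and these norms are controlled by $\|\cdot\|_{X_\epsilon}$ uniformly in $\epsilon$ by interpolation.

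\emph{Linear terms.} $I_{f'}$ and $I_{g'}$ are linear in $\phi$. We reduce them to the whole-space interaction estimates, which give $o(1)\|\phi_1-\phi_2\|$; to do so we replace $V_{\epsilon,k}$ by $U_{\epsilon,k}$, at a cost of the $\|g'(V)-g'(U)\|_{L^{n/2}(\Omega_\epsilon)}=o(1)$ bound from Lemma \ref{Control bounded}. The term $\mathcal Q$ is handled in the same way and is $o(1)\|\phi_1-\phi_2\|$ by the dominated-convergence argument, using $0\le V_{\epsilon,k}\le U_{\epsilon,k}+C\epsilon^\alpha$. Finally, $R_2$ is linear in $\phi$, and Lemma \ref{Control bounded} gives directly $\|R_2(\phi_1-\phi_2)\|\le o(1)\|\phi_1-\phi_2\|$.

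Combining these bounds with $\|\phi_i\|\le N\epsilon$, we obtain
\[
\|T_{\epsilon,P}\phi_1-T_{\epsilon,P}\phi_2\|\le C\big((2N\epsilon)^{q-1}+(N\epsilon)^{q-1}+o(1)\big)\|\phi_1-\phi_2\|.
\]
Since $q>1$, the factor is below $1/2$ for $\epsilon$ small, uniformly in $P$.

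The main obstacle is checking that every $o(1)$ is uniform in $P\in E$, in particular for the $\mathcal Q$ term with $V$ in place of $U$. The order $|g'(V)-g'(U)|\le|V-U|^{q-1}\le C\epsilon^{\alpha(q-1)}$, multiplied by $\mathrm{mes}(\Omega_\epsilon)^{2/n}\sim\epsilon^{-2}$, is small only because $\alpha(q-1)>2$. That inequality holds since $q>q^*$, and the constants are uniform because $d(E,\partial\Omega)>0$.
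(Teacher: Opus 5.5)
Your proposal is correct and follows the same route as the paper, whose one-line proof reuses the whole-space Lipschitz estimates of Proposition~\ref{Lipschitz} (with the $\epsilon$-independent bounds for $i_\epsilon^*$) and adds the $o(1)\|\phi_1-\phi_2\|_X$ control of $R_2$ from Lemma~\ref{Control bounded}, exactly as you do. One small point: the lower bound $0\le V_{\epsilon,k}$ you invoke is not established, since the source $U^p-Q(P)U^q$ changes sign. It is also not needed, because $g'$ only involves $V_{\epsilon,k}^+$, and $V_{\epsilon,k}^+\le U_{\epsilon,k}$ follows from the maximum principle.
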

\begin{proof}
The same calculations in the unbounded case and the control of $R_2$ obtained in the last lemma led to the result.
\end{proof}

Finally as before, we deduce the existence of $\phi_{\epsilon,P} \in  B(0,N\epsilon)\cap X_{\epsilon}$ such that $T_{\epsilon,P}\phi_{\epsilon,P}=\phi_{\epsilon,P}$. In addition, $P \mapsto \phi_{\epsilon,P}$ is continuous.

\subsection{Solving the bifurcation equation}
 Fix $k \in \{1,2\}$ and $j \in \{1,...,n\}$. Define
$$G_{\epsilon,k,j}(P)=\langle -W_{\epsilon,P}  + i_{\epsilon}^*(f(W_{\epsilon,P} ) - g(W_{\epsilon,P} )Q_{\epsilon})|\psi_{\epsilon,k,j}\rangle $$Until now, we have worked with an arbitrary compact set $E$. Now introduce for $k\in\{1,2\}$, $R_k$ a real such that $Q_{|B(\xi_k,R_k)}$ has only $\xi_k$ as a critical point (it is possible because $\xi_k$ is a non degenerate critical point), such that $d(B(\xi_1,R_1),B(\xi_2,R_2))>0$ and $B(\xi_k,R_k)\subset \Omega$. Until the end we will consider $E=\bar{B}(\xi_1,R_1)\times\bar{B}(\xi_2,R_2) $. As before, we now prove following result
\begin{lemma}\label{Brouwer3}

There are constants $C$ and $l$ such that:
 $P \rightarrow (\frac{CG_{\epsilon,k,j}(P)}{Q(P_k)^l\epsilon})_{k,j}$ converges uniformly to $\nabla \tilde Q$ when $\epsilon \rightarrow0$ 
where $\tilde{Q}:(P_1,P_2) \rightarrow \sum_{k} Q(P_k)$.
$$\\$$
\end{lemma}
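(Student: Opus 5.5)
The plan is to reduce Lemma \ref{Brouwer3} to the whole-space computation of Proposition \ref{Brouwer 2}. The boundary corrections $V_{\epsilon,k}-U_{\epsilon,k}$ and $\psi_{\epsilon,k,j}-\partial_jU_{\epsilon,k}$ will be shown to contribute only $o(\epsilon)$, uniformly for $P\in E$. Fix $k_0$ (say $k_0=1$) and $j$.

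\textbf{Step 1: expand $G_{\epsilon,k_0,j}$.} Using $W_{\epsilon,P}=S_{\epsilon,P}+\phi_{\epsilon,P}$ and the decomposition of Section 4.3.1, I write
\[
-W+i_\epsilon^*(f(W)-g(W)Q_\epsilon)
= -L_{\epsilon,P}\phi+N_f+N_g+I_f+I_g+I_{f'}+I_{g'}
+\textstyle\sum_k i_\epsilon^*\big(g'(V_{\epsilon,k})(Q(P_k)-Q_\epsilon)\phi\big)
+\textstyle\sum_k i_\epsilon^*\big((Q(P_k)-Q_\epsilon)g(V_{\epsilon,k})\big)+R_1+R_2 ,
\]
and pair each term with $\psi_{\epsilon,k_0,j}$ in $\langle\cdot|\cdot\rangle_\epsilon$.

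\textbf{Step 2: the linear term.} The term $\langle L_{\epsilon,P}\phi|\psi_{\epsilon,k_0,j}\rangle_\epsilon$ is handled by self-adjointness. By definition of $\psi_{\epsilon,k_0,j}$,
\[
\langle i_\epsilon^*(h\phi)|\psi_{\epsilon,k_0,j}\rangle_\epsilon=\int h\phi\,\psi_{\epsilon,k_0,j},
\qquad
\langle \phi|\psi_{\epsilon,k_0,j}\rangle_\epsilon=\int \big(f'(U_{\epsilon,k_0})-Q(P_{k_0})g'(U_{\epsilon,k_0})\big)\partial_jU_{\epsilon,k_0}\,\phi .
\]
So the $k_0$ part of $\langle L_{\epsilon,P}\phi|\psi_{\epsilon,k_0,j}\rangle_\epsilon$ equals
\[
\int \big(f'(U_{\epsilon,k_0})-Q(P_{k_0})g'(U_{\epsilon,k_0})\big)\,\phi\,\big(\partial_jU_{\epsilon,k_0}-\psi_{\epsilon,k_0,j}\big).
\]
This is bounded by $\|\phi\|_X\,\|\psi_{\epsilon,k_0,j}-\partial_jU_{\epsilon,k_0}\|_{L^\infty(\Omega_\epsilon)}\,\|U^{q-1}_{P_{k_0}}\|_{2n/(n+2)}$, hence by $O(\epsilon\cdot\epsilon^{\alpha+1})=o(\epsilon)$. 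If $U^{q-1}\notin L^{2n/(n+2)}$, I would instead use the measure bound $|\Omega_\epsilon|\le C\epsilon^{-n}$, exactly as in Lemma \ref{Control bounded}; since $\alpha$ is large for $n\ge9$, the estimate still closes. The $k\neq k_0$ part is the cross interaction. It is treated as in Proposition \ref{Brouwer 2}, by splitting into $|x|\le r$ and $|x|>r$, and gives $o(\epsilon)$.

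\textbf{Step 3: the remaining error terms.} The terms $N_f,N_g,I_f,I_g,I_{f'},I_{g'}$, $R_1,R_2$ and the $g'(V)(Q(P_k)-Q_\epsilon)\phi$ term are $o(\epsilon)$ after pairing, for the following reasons.
\begin{itemize}
\item $\|\psi_{\epsilon,k_0,j}\|_{X_\epsilon}$ is bounded, by Lemma \ref{Convergence Projection}.
\item $\|\phi\|_X\le N\epsilon$.
\item The norm bounds of Lemmas \ref{Control bounded} and \ref{Stability3} hold, with $q>1$ for the nonlinear terms.
\item For the terms linear in $\phi$, I would redo the duality argument of Proposition \ref{Brouwer 2}, now with $\psi_{\epsilon,k_0,j}$ replaced by $\partial_jU_{\epsilon,k_0}$ up to $O(\epsilon^{\alpha+1})$ in $L^\infty$. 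This gains an extra factor $o(1)$ on top of $\|\phi\|_X=O(\epsilon)$.
\end{itemize}

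\textbf{Step 4: the leading term.} It remains to compute $\sum_k\langle i_\epsilon^*((Q_\epsilon-Q(P_k))g(V_{\epsilon,k}))|\psi_{\epsilon,k_0,j}\rangle_\epsilon=\sum_k\int_{\Omega_\epsilon}(Q_\epsilon-Q(P_k))V_{\epsilon,k}^q\,\psi_{\epsilon,k_0,j}$. First I replace $V$ by $U$ and $\psi$ by $\partial_jU$. The $L^\infty$ errors $C\epsilon^\alpha$ and $C\epsilon^{\alpha+1}$, combined with $|Q_\epsilon-Q(P_k)|\le C$ and $|\Omega_\epsilon|\le C\epsilon^{-n}$, give errors of order $\epsilon^{\alpha-n}$. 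That is not small, so here I would instead pair the pointwise error with the decaying factor $U^{q-1}|\partial U|$ or $U^q$, integrable in $L^1$ with room to spare, which gives $O(\epsilon^{\alpha})=o(\epsilon)$. Next I extend the integral from $\Omega_\epsilon$ to $\mathbb{R}^n$. The tail is bounded by $\int_{|x|\ge c/\epsilon}U^q|\nabla U|$, which is $O(\epsilon^{\alpha q+\alpha+1-n})=o(\epsilon)$. After these two reductions, the Stokes computation of Proposition \ref{Brouwer 2} gives $-\epsilon\,\partial_jQ(P_{k_0})Q(P_{k_0})^{l}\frac{1}{q+1}\int U^{q+1}+o(\epsilon)$ for $k=k_0$, with $l=(q+1)a-nb$, and the convolution argument gives $o(\epsilon)$ for $k\ne k_0$.

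Dividing by $\epsilon Q(P_{k_0})^l$, with $Q$ bounded below on $E$, and setting $C=-(q+1)/\int U^{q+1}$ yields uniform convergence to $\nabla\tilde Q$.

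\textbf{Main obstacle.} The hardest step is ensuring that every boundary correction is $o(\epsilon)$ without losing the large factor $|\Omega_\epsilon|\sim\epsilon^{-n}$. My approach throughout is to always pair the $L^\infty$ correction with an integrable decaying profile, rather than integrating it over the whole domain.
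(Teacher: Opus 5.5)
Your proposal is correct and is essentially the paper's argument. It uses the same decomposition of $G_{\epsilon,k_0,j}$ and the same reduction of the $k_0$ linear term to $\int(f'(U_{\epsilon,k_0})-Q(P_{k_0})g'(U_{\epsilon,k_0}))\phi\,(\partial_jU_{\epsilon,k_0}-\psi_{\epsilon,k_0,j})$, controlled through $|\Omega_\epsilon|\le C\epsilon^{-n}$. Your fallback there is the bound actually needed: the available decay does not put $U^{q-1}$ in $L^{2n/(n+2)}$, since $(n-2)(q-1)\frac{2n}{n+2}<\frac{8n}{n+2}<n$. The leading term is the same Stokes computation. The only variation is that you justify $V\to U$ and $\psi\to\partial_jU$ by weighted pointwise bounds and then truncate to $\mathbb{R}^n$, with $Q$ extended. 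The paper instead simply asserts that replacement and keeps a boundary integral over $\partial\Omega_\epsilon$.

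There is one sign slip. Step 1 contains $+\sum_k i_\epsilon^*((Q(P_k)-Q_\epsilon)g(V_{\epsilon,k}))$, but Step 4 evaluates the opposite sign. The correct expansion is $G_{\epsilon,k_0,j}=\frac{\epsilon}{q+1}\,\partial_jQ(P_{k_0})Q(P_{k_0})^{l}\int_{\mathbb{R}^n}U^{q+1}+o(\epsilon)$, so the constant must be $C=(q+1)/\int_{\mathbb{R}^n}U^{q+1}>0$. Your negative $C$ gives convergence to $-\nabla\tilde Q$, which is harmless for the degree argument but is not the stated limit.
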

\begin{proof}
For the sake of simplicity, we will suppose that ${k_0}=1$.
$$G_{\epsilon,{k_0},j}(P)=\langle -W + i_{\epsilon}^*(f(W) - g(W)Q_{\epsilon})|\psi_{\epsilon,{k_0},j}\rangle  $$

$$=-\langle \phi|\psi_{\epsilon,{k_0},j}\rangle + \langle i_{\epsilon}^* ((f'(U_{\epsilon,{k_0}})-g'(U_{\epsilon,{k_0}})Q(P_{k_0}))\phi)|\psi_{\epsilon,{k_0},j}\rangle +\sum \langle i_{\epsilon}^*(g(V_{\epsilon,k})(Q(P_k)-Q_{\epsilon})|\psi_{\epsilon,{k_0},j}\rangle +$$
$$\langle I_f+I_g|\psi_{\epsilon,{k_0},j}\rangle+\langle N_f(\phi)|\psi_{\epsilon,{k_0},j}\rangle +    \langle N_g(\phi)|\psi_{\epsilon,{k_0},j}\rangle +\langle R_1|\psi_{\epsilon,{k_0},j}\rangle  $$$$+\langle (f'(S_{\epsilon,P})-f'(V_{\epsilon,{k_0}}))\phi|\psi_{\epsilon,{k_0},j}\rangle -\langle Q_{\epsilon}(g'(S_{\epsilon,P})-g'(V_{\epsilon,{k_0}}))\phi|\psi_{\epsilon,{k_0},j}\rangle $$$$+\langle i_{\epsilon}^*((f'(V_{\epsilon,{k_0}})-f'(U_{\epsilon,{k_0}})-[g'(V_{\epsilon,{k_0}})-g'(U_{\epsilon,{k_0}})]Q(P_{k_0}))\phi)|\psi_{\epsilon,{k_0},j}\rangle+\langle i^*(g'(V_{\epsilon,{k_0}})(Q(P_{k_0})-Q_{\epsilon})\phi|\psi_{\epsilon,{k_0},j}\rangle$$
Using Lemma \ref{Control bounded} and Section 3, we deduce easily that

$$|\langle N_f(\phi)+N_g(\phi)|\psi_{\epsilon,{k_0},j}\rangle|=o(\epsilon)$$
$$+\langle (f'(S_{\epsilon,P})-f'(V_{\epsilon,{k_0}}))\phi|\psi_{\epsilon,{k_0},j}\rangle -\langle Q_{\epsilon}(g'(S_{\epsilon,P})-g'(V_{\epsilon,{k_0}}))\phi|\psi_{\epsilon,{k_0},j}\rangle =o(\epsilon)$$

 $$\langle R_1|\psi_{\epsilon,{k_0},j}\rangle  +\langle i_{\epsilon}^*((f'(V_{\epsilon,{k_0}})-f'(U_{\epsilon,{k_0}})-[g'(V_{\epsilon,{k_0}})-g'(U_{\epsilon,{k_0}})]Q(P_{k_0}))\phi)|\psi_{\epsilon,{k_0},j}\rangle=o(\epsilon)$$
 $$\langle i^*(g'(V_{\epsilon,{k_0}})(Q(P_{k_0})-Q_{\epsilon})\phi|\psi_{\epsilon,{k_0},j}\rangle=o(\epsilon)$$
\medskip
\medskip
\par
 \noindent  \underline{Control of $-\langle \phi|\psi_{\epsilon,{k_0},j}\rangle + \langle i_{\epsilon}^* ((f'(U_{\epsilon,{k_0}})-g'(U_{\epsilon,{k_0}})Q(P_{k_0}))\phi)|\psi_{\epsilon,{k_0},j}\rangle$ 
 :}
 $\\$

\noindent One has, 
$$-\langle \phi|\psi_{\epsilon,{k_0},j}\rangle_{H^1_0}=\langle \phi|\Delta \psi_{\epsilon,{k_0},j}\rangle_{2}$$
But,

$$\langle i_{\epsilon}^* ((f'(U_{\epsilon,{k_0}})-g'(U_{\epsilon,{k_0}})Q(P_{k_0}))\phi)|\psi_{\epsilon,{k_0},j}\rangle_X=\langle(f'(U_{\epsilon,{k_0}})-g'(U_{\epsilon,k_0})Q(P_{k_0}))\phi|\psi_{\epsilon,{k_0},j}\rangle_2$$
$$=\left\langle(f'(U_{\epsilon,{k_0}})-g'(U_{\epsilon,{k_0}})Q(P_{k_0}))\phi|\psi_{\epsilon,{k_0},j}-\frac{\partial U_{\epsilon,{k_0}}}{\partial x_j}\right\rangle_2+\langle\phi|-\Delta \psi_{\epsilon,{k_0},j}\rangle_2$$
Then, by Hölder's inequality, 
$\\$

$$\left|\left|\left\langle(f'(U_{\epsilon,{k_0}})-g'(U_{\epsilon,{k_0}})Q(P_{k_0}))\phi|\psi_{\epsilon,{k_0},j}-\frac{\partial U_{\epsilon,{k_0}}}{\partial x_j}\right\rangle_2\right|\right|\leq \|\phi\|_2\left|\left|\psi_{\epsilon,{k_0},j}-\frac{\partial U_{\epsilon,{k_0}}}{\partial x_j}\right|\right|_2$$$$\leq \|\phi\|_X\left|\left|\psi_{\epsilon,{k_0},j}-\frac{\partial U_{\epsilon,{k_0}}}{\partial x_j}\right|\right|_{\infty}\epsilon^{-\frac{n}{2}}=o(\epsilon)$$
because, 
$\epsilon^{\alpha+1-\frac{n}{2}}=o(\epsilon)$.
In conclusion, 

$$-\langle \phi|\psi_{\epsilon,{k_0},j}\rangle + \langle i_{\epsilon}^* ((f'(U_{\epsilon,{k_0}})-g'(U_{\epsilon,{k_0}})Q(P_{k_0}))\phi)|\psi_{\epsilon,{k_0},j}\rangle = o(\epsilon)$$
\medskip
\par

 \noindent  \underline{Control of $\sum\langle i^*(g(V_{\epsilon,k})(Q(P_{k})-Q_{\epsilon}))|\psi_{\epsilon,{k_0},j}\rangle$}:

\medskip

On the one hand,
$$|\langle i_{\epsilon}^*(g(V_{\epsilon,2})(Q_{\epsilon}-Q(P_2))|\psi_{\epsilon,{k_0},j}\rangle_X|=|\langle g(V_{\epsilon,2})(Q_{\epsilon}-Q(P_2))|\psi_{\epsilon,{k_0},j}\rangle_2|  =o(\epsilon) + \left|\left\langle g(U_{\epsilon,2})(Q_{\epsilon}-Q(P_2))|\frac{\partial U_{\epsilon,1}}{\partial x_j}\right\rangle_2\right|$$
But, as before 
$\left|\left\langle g(U_{\epsilon,2})(Q_{\epsilon}-Q(P_2)|\frac{\partial U_{\epsilon,1}}{\partial x_j}\right\rangle_2\right|=o(\epsilon)$
Thus, 
$$|\langle i_{\epsilon}^*(g(V_{\epsilon,2})(Q_{\epsilon}-Q(P_2))|\psi_{\epsilon,k_0,j}\rangle|=o(\epsilon)$$
On the other hand
$$\langle i^*(g(V_{\epsilon,1})(Q(P_1)-Q_{\epsilon})|\psi_{\epsilon,{k_0},j}\rangle_X=\left\langle g(U_{\epsilon,1})(Q(P_1)-Q_{\epsilon})|\frac{\partial U_{\epsilon,1}}{\partial x_j }\right \rangle+o(\epsilon)$$

Then, by the Stokes Theorem
$$\left\langle g(U_{\epsilon,1})(Q(P_1)-Q_{\epsilon})|\frac{\partial U_{\epsilon,1}}{\partial x_j }\right \rangle= \frac{\epsilon}{q+1}\int_{\Omega_{\epsilon}-\frac{P_1}{\epsilon}}\frac{\partial Q(\epsilon x+P_1)}{\partial x_j}U^{q+1}_{P_1}(x)dx +\frac{1}{q+1}\int_{\partial \Omega_{\epsilon}} U^{q+1}_{\epsilon,1}v_j(Q(P_1)-Q_{\epsilon})$$

Then, 
$$\int_{\Omega_{\epsilon}-\frac{P_1}{\epsilon}}\frac{\partial Q(\epsilon x+P_1)}{\partial x_j}U^{q+1}_{P_1}(x)dx=\int_{\Omega_{\epsilon}-\frac{P_1}{\epsilon}}\frac{\partial Q(P_1)}{\partial x_j}U^{q+1}_{P_1}(x)+o(1)=\int_{\mathbb{R}^n}\frac{\partial Q(P_1)}{\partial x_j}U^{q+1}_{P_1}(x)+o(1)$$
In addition, 

$$\left|\int_{\partial \Omega_{\epsilon}} U^{q+1}_{\epsilon,1}v_j(Q(P_1)-Q_{\epsilon})\right|\leq CH^{n-1}(\partial\Omega_{\epsilon})\epsilon^{\alpha(q+1)}=o(\epsilon)$$

$$\left\langle g(U_{\epsilon,1})(Q(P_1)-Q_{\epsilon})|\frac{\partial U_{\epsilon,1}}{\partial x_j }\right \rangle=C\epsilon\frac{\partial Q(P_1)}{\partial x_j}Q(P_1)^{(q+1)a-nb}\int_{\mathbb{R}^n}U^{q+1}+o(\epsilon)$$
We conclude that $$\left(\frac{G_{\epsilon}(P)}{Q(P_k)^l\epsilon}\right)_{k,j}$$ converges uniformly to $C\nabla \tilde{Q}$ which concludes.
\end{proof}

$\\$$\\$
Using Lemma \ref{Brouwer2} and \ref{solution} we deduce 

\begin{proposition}\label{end unbounded}
There is a family $(P_{\epsilon})$ of points of $\Omega$ such that 
\begin{itemize}
    \item $P_{\epsilon,k}\rightarrow \xi_k$

    \item If $ W_{\epsilon}:=\sum V_{\epsilon,P_{\epsilon}}+\phi_{\epsilon,P_{\epsilon}}$, then 
$$W_{\epsilon}=i_{\epsilon}^*(f(W_\epsilon)-Q_{\epsilon}g(W_{\epsilon})).$$
\end{itemize}

\end{proposition}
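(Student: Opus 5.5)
The plan mirrors the whole-space Proposition~\ref{end unbounded}, combined with the degree argument already used there. First, the points $P_\epsilon$ are obtained. We keep $E=\bar B(\xi_1,R_1)\times\bar B(\xi_2,R_2)\subset\Omega^2$ with $d(B(\xi_1,R_1),B(\xi_2,R_2))>0$, and set $O_m=B(\xi_1,\frac{R_1}{m})\times B(\xi_2,\frac{R_2}{m})$.

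\textbf{Degree argument.} The map $P\mapsto G_{\epsilon,k,j}(P)$ is continuous on $E$. This follows because $P\mapsto\phi_{\epsilon,P}$ is continuous (Lemma~\ref{Banach} applied together with the continuity of $T_{\epsilon,P}$ in $P$). It also uses that $P\mapsto V_{\epsilon,P_k}$ and $P\mapsto\psi_{\epsilon,P_k,j}$ are continuous into $X_\epsilon$ (Lemma~\ref{Continuity P}), and that $i_\epsilon^*$ is continuous.
\begin{itemize}
\item By Lemma~\ref{Brouwer3}, $F_\epsilon:=\big(CG_{\epsilon,k,j}(P)Q(P_k)^{-l}\epsilon^{-1}\big)_{k,j}$ converges uniformly on $\bar O_m$ to $\nabla\tilde Q$.
\item Since $\xi_1,\xi_2$ are nondegenerate and are the only critical points in their balls, $0\notin\nabla\tilde Q(\partial O_m)$ and $\deg(\nabla\tilde Q,O_m,0)=\operatorname{sign}\det\nabla^2Q(\xi_1)\cdot\operatorname{sign}\det\nabla^2Q(\xi_2)\neq0$.
\end{itemize}
Lemma~\ref{Brouwer2} then gives $\epsilon_m$, which we may take decreasing, with $\deg(F_\epsilon,O_m,0)\ne0$ for $\epsilon<\epsilon_m$. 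Lemma~\ref{solution} gives $P_\epsilon\in O_m$ with $F_\epsilon(P_\epsilon)=0$, hence $G_\epsilon(P_\epsilon)=0$, because $Q>0$. For $\epsilon_{m+1}\le\epsilon<\epsilon_m$ we choose such a $P_\epsilon\in O_m$. This diagonal choice yields $P_{\epsilon,k}\to\xi_k$.

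\textbf{Vanishing of the equation.} Set $Z:=W_\epsilon-i_\epsilon^*(f(W_\epsilon)-Q_\epsilon g(W_\epsilon))\in H^1_0(\Omega_\epsilon)$. The fixed point identity $\phi=T_{\epsilon,P}\phi$ means $\tilde L_{\epsilon,P}\phi=\Pi_{\epsilon,P}(\dots)$. Unwinding the decomposition of Section 4.3.1, this says exactly $\Pi_{\epsilon,P_\epsilon}Z=0$, i.e.\ $Z\in K_{\epsilon,P_\epsilon}=\mathrm{Span}\{\psi_{\epsilon,k,j}\}$. Here one should check that the decomposition used $S_{\epsilon,P}=\sum i_\epsilon^*(f(U_{\epsilon,k})-Q(P_k)g(U_{\epsilon,k}))$, which is the definition of $V_{\epsilon,k}$; the remainder terms $R_1,R_2$ are precisely what account for replacing $U$ by $V$. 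The equation $G_\epsilon(P_\epsilon)=0$ says $\langle Z|\psi_{\epsilon,k,j}\rangle_\epsilon=0$ for all $k,j$, so $Z\in K_{\epsilon,P_\epsilon}^\perp$. Therefore $Z\in K\cap K^\perp=\{0\}$.

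\textbf{Main obstacle.} The delicate step is confirming that the bookkeeping equivalence between "$\phi$ is a fixed point of $T_{\epsilon,P}$" and "$\Pi_{\epsilon,P}Z=0$" still holds with the mixed operator $L_{\epsilon,P}$. That operator is built with $f'(U_{\epsilon,k})$ while the projection is onto the span of the $\psi_{\epsilon,k,j}$. One needs the identity $Z=-L_{\epsilon,P}\phi+(\text{right-hand side})$ exactly, not up to $o(1)$. I would verify this by rewriting $W=S+\phi$ and expanding $f(S+\phi)$ and $g(S+\phi)$ algebraically, as in the displayed decomposition. A second point is that $\Pi_{\epsilon,P}$ must be an honest projection on $X_\epsilon$, i.e.\ the Gram matrix of the $\psi_{\epsilon,k,j}$ is invertible. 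This follows from Lemma~\ref{Convergence Projection}, which makes it a small perturbation of the whole-space Gram matrix.
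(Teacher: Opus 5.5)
Your proposal is correct and follows the paper's route. You run the degree argument on the shrinking sets $O_m$ via Lemma~\ref{Brouwer3}, Lemma~\ref{Brouwer2} and Lemma~\ref{solution}, then conclude with the same $K_{\epsilon,P_\epsilon}\cap K_{\epsilon,P_\epsilon}^\perp=\{0\}$ argument as in the whole-space case, and you also supply the diagonal choice of $P_\epsilon$ and the exact bookkeeping (including the role of $R_1,R_2$) that the paper leaves implicit. One harmless slip: expanding the decomposition actually gives $Z=L_{\epsilon,P}\phi-(\text{right-hand side})$ rather than its negative, which does not affect $\Pi_{\epsilon,P_\epsilon}Z=0$.
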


\subsection{Properties of the solution}
As before, we can deduce some properties on $W$. 
\begin{lemma}
For all $0<\epsilon<\epsilon_0$, $W_{\epsilon,P_{\epsilon}}\in C^2(\bar{\Omega}_{\epsilon})$ and $W_{\epsilon,P_{\epsilon}}$ is a strong solution of
$$-\Delta u = u^p-Q_{\epsilon}u^q $$
\end{lemma}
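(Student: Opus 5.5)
The plan is to follow the argument of Lemma \ref{regularity2} for the whole-space case, adding a boundary regularity step at the end. We know that $W=W_{\epsilon,P_\epsilon}\in X_\epsilon\subset H^1_0(\Omega_\epsilon)$ is a weak solution of $-\Delta W=f(W)-Q_\epsilon g(W)$ in $\Omega_\epsilon$ with $W=0$ on $\partial\Omega_\epsilon$. Indeed, $W=i_\epsilon^*(f(W)-Q_\epsilon g(W))$ by Proposition \ref{end unbounded}, and by definition of $i_\epsilon^*$ this means $\int_{\Omega_\epsilon}\nabla W\nabla\varphi=\int_{\Omega_\epsilon}(f(W)-Q_\epsilon g(W))\varphi$ for every $\varphi\in H^1_0(\Omega_\epsilon)$. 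Since $\epsilon$ is fixed and $\Omega_\epsilon$ is bounded and smooth, the argument reduces to standard elliptic regularity for a subcritical nonlinearity on a fixed smooth bounded domain.

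\emph{Step 1 (bootstrap up to the boundary).} Set $h=f(W)-Q_\epsilon g(W)$. Because $W\in L^{2^*}(\Omega_\epsilon)$ and $|h|\le |W|^p+C|W|^q\le C(1+|W|^p)$ on the bounded set $\Omega_\epsilon$, we have $h\in L^{\beta_0}(\Omega_\epsilon)$ with $\beta_0=2^*/p$. In the interior the whole-space argument avoided boundary issues by using local problems on balls $B_m$ together with Weyl's lemma. Here we can instead apply global $W^{2,\beta}$ estimates for the Dirichlet problem on the smooth domain $\Omega_\epsilon$ (Calderón–Zygmund / Agmon–Douglis–Nirenberg, or Theorem 1.10 of \cite{AmbrosettiMalchiodi2007}). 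Since $W$ is the unique $H^1_0$ solution of $-\Delta u=h$, these estimates give $W\in W^{2,\beta_0}(\Omega_\epsilon)\subset L^{p_1}(\Omega_\epsilon)$ with $\frac1{p_1}=\frac1{\beta_0}-\frac2n$. Because $p<\frac{n+2}{n-2}$, we get $p_1>\mu\,2^*$ with $\mu>1$ fixed, exactly as in the whole-space case. Iterating $\beta_{m+1}=p_{m+1}/p$ increases the integrability geometrically, so after finitely many steps $\beta_m>n/2$. Then $W\in W^{2,\beta_m}(\Omega_\epsilon)\subset C^{0,\gamma}(\overline{\Omega}_\epsilon)$.

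\emph{Step 2 (Schauder).} Once $W\in C^{0,\gamma}(\overline\Omega_\epsilon)$, the functions $f(W)=(W^+)^p$ and $g(W)=(W^+)^q$ are Hölder continuous, since $t\mapsto (t^+)^p$ and $t\mapsto (t^+)^q$ are $C^1$ when $p,q>1$, hence locally Lipschitz. Also $Q_\epsilon\in C^2$, so $h\in C^{0,\gamma'}(\overline\Omega_\epsilon)$. Global Schauder theory for the Dirichlet problem on the smooth domain then gives $W\in C^{2,\gamma'}(\overline\Omega_\epsilon)$, and the equation holds pointwise. Finally we check positivity, so that the positive parts can be dropped and $W$ solves $-\Delta u=u^p-Q_\epsilon u^q$. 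Testing with $W^-\in H^1_0(\Omega_\epsilon)$ gives $W^-=0$, exactly as in Proposition \ref{positivity}; that argument only used the defining identity of the adjoint, so it works verbatim for $i_\epsilon^*$.

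The main obstacle is the boundary regularity in Step 1. The interior trick via Weyl's lemma does not control $W$ near $\partial\Omega_\epsilon$, so we must use global $L^\beta$ estimates instead. To avoid subtleties about which version of the estimate applies, we can use that $\Omega$ is smooth and cite the $W^{2,\beta}$ estimate for $-\Delta u=h\in L^\beta$, $u\in H^1_0$ (Gilbarg–Trudinger, Theorem 9.15), checking at each stage that the solution it produces coincides with $W$ by uniqueness in $H^1_0$. The constants may depend on $\epsilon$, which is harmless here since $\epsilon$ is fixed.
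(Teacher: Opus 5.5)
Your proposal is correct and takes essentially the paper's route. The paper just cites Theorem 1.16 of Ambrosetti--Malchiodi, which is exactly the bootstrap you write out: global $W^{2,\beta}$ estimates for the Dirichlet problem on $\Omega_\epsilon$, iterated from $\beta_0=2^*/p$ until $\beta_m>n/2$, then Schauder up to the boundary. Your remark that each $W^{2,\beta}\cap W^{1,\beta}_0$ solution lies in $H^1_0(\Omega_\epsilon)$, because $\beta_0>\frac{2n}{n+2}$, and therefore equals $W$ by uniqueness is a worthwhile detail to include; the positivity step, which the paper defers to the next proposition, is only needed to replace $(W^+)^p,(W^+)^q$ by $W^p,W^q$.
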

\begin{proof}
Apply directly Theorem 1.16 of \cite{AmbrosettiMalchiodi2007}.
\end{proof}

Now we have a strong solution $W_{\epsilon}$ of \eqref{BVP4}. We want to show that this solution is positive to answer to the initial problem and respects the required properties (the proofs are similar as before),

\begin{proposition}\label{positivity}
    If $\epsilon >0$ is small enough, for all $0<\epsilon <\epsilon_0$, $W_{\epsilon,P_{\epsilon}}>0$.

Now we set $(u_{\epsilon})=W_{\epsilon}(\frac{x}{\epsilon})$.
 $u_{\epsilon}$ concentrates at $(P_{\epsilon})$
and we have for $R>0$ and $\epsilon$ small enough, $$sup_{y\notin B(P_{\epsilon,1},R\epsilon)\cup B(P_{\epsilon,2},R\epsilon)} |u_{\epsilon}(y)|\leq \frac{C}{R^\alpha}+C\epsilon.$$

\end{proposition}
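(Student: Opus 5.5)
The proof follows the whole-space argument of Section 3.5 (the positivity proposition there and Proposition \ref{regularity}), now on the expanding domain $\Omega_\epsilon$. I write $W=W_{\epsilon,P_\epsilon}=S_{\epsilon,P_\epsilon}+\phi_\epsilon$, with $S_{\epsilon,P_\epsilon}=\sum_k V_{\epsilon,k}$ and $\|\phi_\epsilon\|_X\le N\epsilon$. The preceding lemma gives $W\in C^2(\bar\Omega_\epsilon)$, and Proposition \ref{end unbounded} gives $W=i_\epsilon^*(f(W)-Q_\epsilon g(W))$.

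\textbf{Step 1: nonnegativity.} I test this identity in $H^1_0(\Omega_\epsilon)$ against $W^-=-\min(W,0)$, which lies in $H^1_0(\Omega_\epsilon)$. Since $f$ and $g$ vanish on $(-\infty,0]$, the right-hand side gives $\int_{\Omega_\epsilon}(f(W)-Q_\epsilon g(W))W^-=0$. Hence $\langle W,W^-\rangle_\epsilon=-\|\nabla W^-\|_2^2=0$, so $W^-=0$ and $W\ge 0$.

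\textbf{Step 2: nontriviality and strict positivity.} To see that $W\not\equiv0$, I use
\[
\|W\|_X\ge\|S_{\epsilon,P_\epsilon}\|_X-N\epsilon .
\]
Since $|U_{\epsilon,k}-V_{\epsilon,k}|\le C\epsilon^\alpha$ on $\Omega_\epsilon$, Lemma \ref{Control bounded}-type estimates give $\|S_{\epsilon,P_\epsilon}\|_X\ge c>0$ uniformly, so $W\neq0$ for small $\epsilon$. Next, $W$ satisfies $-\Delta W+cW=W^p\ge0$ with $c=Q_\epsilon W^{q-1}\ge0$, and $c$ is bounded on $\bar\Omega_\epsilon$ because $W\in C^2(\bar\Omega_\epsilon)$. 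The strong maximum principle then gives $W>0$ in $\Omega_\epsilon$. Here I take $\Omega$ connected; otherwise the argument applies on the component containing the peaks.

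\textbf{Step 3: uniform local $L^\infty$ bound (main obstacle).} In the whole-space proof, the estimate
\[
\|W\|_{L^\infty(B(x_0,1/2))}\le C\|W\|_{L^{2^*}(B(x_0,1))}
\]
came from Theorem 4.1 of \cite{HanLin2011}. Here balls $B(x_0,1)$ may meet $\partial\Omega_\epsilon$, and the constant must not depend on $\epsilon$, even though the geometry of $\partial\Omega_\epsilon$ flattens as $\epsilon\to0$. I would avoid boundary regularity altogether. Extend $W$ by $0$ outside $\Omega_\epsilon$. Since $W\ge0$ and $W=0$ on $\partial\Omega_\epsilon$, testing with nonnegative $\varphi\in C_c^\infty(\mathbb{R}^n)$ against truncations $(W-\delta)^+$ and letting $\delta\to0$ shows that the extension is a weak subsolution of
\[
-\Delta w\le c\,w\quad\text{in }\mathbb{R}^n,\qquad c=W^{p-1}.
\]
Moreover $\|c\|_{L^\beta(\mathbb{R}^n)}\le C$ for some $\beta>n/2$, uniformly in $\epsilon$; this uses the interpolation argument of Section 3.5, which is valid since $s\le(p-1)\beta\le 2^*$ for a suitable choice of $\beta$. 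The De Giorgi–Moser local maximum principle for subsolutions then yields the displayed estimate with $C$ independent of $\epsilon$ and $x_0$. If this truncation argument fails, I would fall back on a flattening of $\partial\Omega$ at scale $\epsilon$, where the bounds on the boundary charts are uniform.

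\textbf{Step 4: the decay estimate and the values at the peaks.} Let $x_0\notin\bigcup_k B(P_{\epsilon,k}/\epsilon,R)$. On $B(x_0,1)$ I bound $V_{\epsilon,k}\le U_{\epsilon,k}+C\epsilon^\alpha$ and use $U_{\epsilon,k}\le C/R^\alpha$ there. Using the mesh-free bound $\|U_{\epsilon,k}1_{B(x_0,1)}\|_{2^*}\le C/R^\alpha$ and $\|\phi_\epsilon\|_{2^*}\le CN\epsilon$, this gives
\[
\sup_{y\notin\bigcup_k B(P_{\epsilon,k}/\epsilon,R)}W(y)\le \frac{C}{R^\alpha}+C\epsilon ,
\]
and rescaling $u_\epsilon(x)=W(x/\epsilon)$ yields the stated bound. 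For concentration, the balls $B(P_{\epsilon,k}/\epsilon,1)$ lie inside $\Omega_\epsilon$ at distance of order $1/\epsilon$ from the boundary, so the interior argument of Proposition \ref{regularity} applies verbatim. Concretely, $\phi_\epsilon$ solves $-\Delta\phi+c\phi=h_\epsilon$, where $h_\epsilon$ now also contains the terms $V_{\epsilon,k}-U_{\epsilon,k}$; these are $O(\epsilon^\alpha)$ in $L^\infty$ by the bound above. This gives $\phi_\epsilon(P_{\epsilon,k}/\epsilon)\to0$, and since $S_{\epsilon,P_\epsilon}(P_{\epsilon,k}/\epsilon)\to U_{\xi_k}(0)>0$, we get $u_\epsilon(P_{\epsilon,k})\to U_{\xi_k}(0)$.
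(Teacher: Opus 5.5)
Your proposal is correct and follows the paper's route. Positivity comes from testing the equation against $W^-$ and then applying the strong maximum principle. The decay and concentration estimate comes from showing that the zero extension of $W_\epsilon$ to $\mathbb{R}^n$ is a weak subsolution, and then applying the local maximum principle of Han--Lin (Theorem 4.1) with constants independent of $\epsilon$ and $x_0$.

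The differences are minor. You derive the subsolution inequality by a truncation (Kato-type) argument and drop the absorbing term $-Q_\epsilon W^q$, whereas the paper uses Green's formula with $\partial W/\partial\nu\le 0$ on $\partial\Omega_\epsilon$, which relies on $W\in C^2(\bar\Omega_\epsilon)$. You also make explicit two points the paper leaves implicit: the uniform $L^\beta$ bound on the coefficient, and the $V_{\epsilon,k}-U_{\epsilon,k}$ corrections in the peak-value argument.
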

\begin{proof}
The proof of the positivity is the same. We only need to prove that $W_{\epsilon}$ concentrates. Denote $\tilde{W}_{\epsilon}$ the extension of $W_\epsilon$ on $\mathbb{R}^n$. $\tilde{W}$ is a subsolution of the equation
$$-\Delta u=\tilde{W}^p-Q_{\epsilon}\tilde{W}^q. $$
(We extend $Q$ by a $C^2$ function on $\mathbb{R}^n$). Indeed, take $\eta \in C_c^\infty(\mathbb{R}^n)$ such that $\eta \geq 0$. We have 

$$\int_{\mathbb{R}^n}\nabla\tilde{W}\nabla\eta =\int_{\Omega_{\epsilon}} (W^p-Q_\epsilon W^q)\eta +\int_{\partial \Omega_{\epsilon}} \frac{\partial W}{\partial v}\eta $$

But, $W>0$ on $\Omega_{\epsilon}$ and $W=0$ on $\partial\Omega_{\epsilon}$. Then $\frac{\partial W}{\partial v}\leq 0$. Thus, 
$$\int_{\mathbb{R}^n}\nabla\tilde{W}\nabla\eta \leq \int_{\mathbb{R}^n} (\tilde{W}^p-Q_\epsilon \tilde{W}^q)\eta. $$

We conclude applying Theorem 4.1 of \cite{HanLin2011}.

\end{proof}

\newpage 
$\\$$\\$

\end{document}